\documentclass[a4paper]{amsart}

\usepackage[utf8]{inputenc}
\usepackage[british]{babel}
\usepackage{amsfonts,amsmath,amssymb,amsthm,mathtools,esint}
\usepackage[dvipsnames]{xcolor}
\usepackage{tikz}
\usepackage{pgfplots}
\pgfplotsset{compat=1.3}
\usepackage{caption}
\usepackage[justification=centering]{caption}

\usepackage[hidelinks]{hyperref}
\usepackage{cleveref}
\crefname{subsection}{subsection}{subsections}
\numberwithin{equation}{section}

\newcommand{\dist}{\mathrm{dist}}

\newcommand{\R}{\mathbb{R}}
\renewcommand{\d}[1]{\,\mathrm{d}#1}
\renewcommand{\div}{\mathrm{div}}
\newcommand{\I}{\mathrm{I}}
\newcommand{\tr}{\mathrm{tr}}
\newcommand{\M}{\mathbb{M}}
\newcommand{\D}{\mathrm{D}}
\newcommand{\rot}{\mathrm{rot}}

\newtheorem{theorem}{Theorem}[section]
\newtheorem{lemma}[theorem]{Lemma}
\newtheorem{corollary}[theorem]{Corollary}
\theoremstyle{remark}
\newtheorem{remark}[theorem]{Remark}
\theoremstyle{definition}
\newtheorem{definition}[theorem]{Definition}
\newtheorem{assumption}{Assumption}

\begin{document}

\title[Regularization of the Monge--Amp\`ere equation]{
        Convergence of a regularized finite element discretization 
         of the two-dimensional Monge--Amp\`ere equation}
\author[D.~Gallistl, N.~T.~Tran]
      {Dietmar Gallistl \and Ngoc Tien Tran}
\thanks{This project received funding from
        the European Union's Horizon 2020 research and innovation 
        programme (project DAFNE, grant agreement No.~891734).
       The authors thank Lukas Gehring for stimulating 
       discussions and for proposing
       the fourth example in the numerical experiments
       as well as two anonymous referees for their valuable comments
       which led to improvements of the manuscript.}
\address[D.~Gallistl, N.~T.~Tran]{%
         Institut f\"ur Mathematik,
         Friedrich-Schiller-Uni\-ver\-si\-t\"at Jena,
         07743 Jena, Germany}
         \email{ $\{$dietmar.gallistl, ngoc.tien.tran$\}$@uni-jena.de}
\date{\today}

\keywords{mixed finite element methods, Monge-Amp\`ere equation,
	          regularization, fully nonlinear PDE}
\subjclass[2010]{35J96, 65N12, 65N30, 65Y20}

\begin{abstract}
This paper proposes a regularization 
of the Monge--Amp\`ere equation in planar convex domains
through uniformly elliptic
Hamilton--Jacobi--Bellman equations.
The regularized problem possesses a unique strong solution
$u_\varepsilon$
and is accessible to the discretization with finite elements.
This work establishes uniform convergence of 
$u_\varepsilon$ to the convex 
Alexandrov solution $u$ to the
Monge--Amp\`ere equation as the regularization parameter
$\varepsilon$ approaches $0$.
A mixed finite element method for the approximation of $u_\varepsilon$
is proposed, and the regularized finite element scheme is
shown to be uniformly convergent.
The class of admissible right-hand sides 
are the functions that can be approximated from
below by positive continuous functions in the $L^1$ norm.
Numerical experiments provide empirical evidence 
for the efficient approximation of singular solutions $u$.
\end{abstract}
	
\maketitle

\section{Introduction}\label{sec:introduction}

This work studies a regularized formulation of the 
two-dimensional Monge--Amp\`ere equation, which makes
the problem accessible to discretizations with the 
finite element method (FEM).

\subsection{Background and motivation}
Given a bounded open convex domain $\Omega \subset \R^2$
in the plane, a nonnegative right-hand side
$f: \Omega \to \R_{\geq 0}$,
and boundary data $g \in C(\overline{\Omega})$,
the Monge--Amp\`ere equation seeks a convex solution $u$ to
\begin{align}\label{pr:Monge-Ampere}
	\det \D^2 u = f \text{ in } \Omega 
	\quad\text{and}\quad u = g \text{ on } \partial \Omega.
\end{align}
If the boundary data is inhomogeneous, i.e.\ $g\neq 0$
on $\partial\Omega$, we additionally assume that
$\Omega$ is strictly convex.
This restriction is required for the existence and strict convexity of weak solutions to \eqref{pr:Monge-Ampere}
and is rather related to the PDE than to the algorithm of this paper.
The Monge--Amp\`ere equation \eqref{pr:Monge-Ampere} arises from
applications in, e.g., optimal transport or differential geometry (prescribed Gaussian curvature). 
It is a fully nonlinear degenerate elliptic 
partial differential equation (PDE); in particular it is
in nondivergence form.
Hence, concepts of generalized solutions are based on maximum and
comparison principles or geometric considerations rather than on weak
differential operators arising from an integration by parts
(see \Cref{ss:generalizedsol} below for definitions).
This makes the numerical approximation of the problem
generally difficult.

In the context of fully nonlinear PDE, finite difference methods (FDM) 
are well studied with general convergence results 
\cite{BarlesSouganidis1991}.
For equations of the Monge--Amp\`ere type,
wide stencil FDMs were proposed and analyzed in, e.g.,
\cite{FroeseOberman2011,FroeseOberman2013,
BenamouCollinoMirebeau2016,Mirebeau2016,NochettoNtogkasZhang2019}. 
These schemes rely on monotonicity of the method
in the sense that certain maximum principles are
respected on the discrete level. 
However, natural limitations arise in that they are restricted to
low-order methods on structured meshes, and fixed size finite
difference stencils are generally not sufficient for a convergent 
scheme \cite{MotzkinWasow1953}.
The design of finite elements for the Monge--Amp\`ere
equation started with the geometric
scheme \cite{OlikerPrussner1988}.
Other existing finite element methods
involve consistent linearization of \eqref{pr:Monge-Ampere}, cf., 
e.g., \cite{BrennerGudiNeilanSung2011,Neilan2014,Awanou2017},
or the addition of 
small perturbations via the bi-Laplacian 
\cite{FengNeilan2011,FengNeilan2014}. 
The survey article 
\cite{NEILAN2020105} provides a thorough overview on FDMs and FEMs for 
the numerical approximation of solutions to \eqref{pr:Monge-Ampere}.
We furthermore refer to the review articles
\cite{FengGlowinskiNeilan2013,NeilanSalgadoZhang2017}
on more general fully nonlinear PDEs.

One major difficulty in the approximation of the Monge--Amp\`ere
equation is caused by the constraint on the solution to be convex.
It is well known \cite{Gutierrez2016} that the solution to
\eqref{pr:Monge-Ampere} is unique only in the cone of convex
functions and the problem is only elliptic on the set of
positive semidefinite matrices.
Yet, the problem can be reformulated as a
Hamilton--Jacobi--Bellman equation
(without convexity constraints) as pointed out by
\cite{Krylov1987}.
This fact was used by \cite{FengJensen2017} as a basis for 
a numerical method.
The formulation is as follows.
For $\M \coloneqq \R^{2 \times 2}$, let $\mathbb{S} \subset \M$ denote the
subspace of 
symmetric matrices, 
$\mathbb{S}_+$ the set of positive semidefinite symmetric matrices,
and $\mathbb{S}(0) \coloneqq \{A \in \mathbb{S}_+: \tr A = 1\}$
the subset of matrices with normalized trace. 
The equivalent HJB equation reads, for continuous right-hand side $f \in C(\Omega)$,
\begin{align}\label{pr:Monge-Ampere-HJB}
	F(f; x, \D^2 u(x)) = 0 \text{ in } \Omega 
	\quad\text{and}\quad  u= g \text{ on } \partial \Omega
\end{align}
for $F(f; \cdot, \cdot) \in C(\Omega \times \mathbb{S})$ 
given by
\begin{align}
	F(f; x, M) \coloneqq 
	\sup_{A \in \mathbb{S}(0)} (- A:M + 2\sqrt{f(x)\det A})
	\label{def:F}
\end{align}
for all $x \in \Omega$ and $M \in \mathbb{S}$.
Major progress on the finite element discretization of
HJB equations was initiated by
\cite{SmearsSueli2013,SmearsSueli2014}.
Those papers establish existence and uniqueness 
as well as finite element convergence theory
of the strong solutions $u \in H^2(\Omega)$
provided the Cordes condition 
\cite{MaugeriPalagachevSoftova2000} holds.
It is a pointwise 
dimension-dependent algebraic condition on the ratio
of the Frobenius norm and the trace of the coefficient,
which in the two-dimensional case (relevant to this paper)
states that there is some $\delta\in(0,1]$ such that
\begin{equation}\label{e:cordes}
	|A|^2/(\tr A)^2 \leq 1/(1 + \delta)
	\quad\text{uniformly in }A,
\end{equation}  
and is equivalent to the uniform ellipticity of $F(f;\cdot,\cdot)$.
This is \emph{not} the case in higher space dimensions.
Problem \eqref{pr:Monge-Ampere-HJB} is however only \emph{degenerate}
elliptic because the eigenvalues of $A$ in \eqref{def:F} may become arbitrary small
and \eqref{e:cordes} may fail to hold.
The point of departure of the regularization approach proposed
in this paper is to replace $F$ by a uniformly elliptic operator
$F_\varepsilon$ by replacing $\mathbb S(0)$ by a 
compact subset allowing for uniformly positive eigenvalues only.

\subsection{Contributions of this paper}
The proposed regularized PDE is uniformly elliptic for each
regularization parameter $0 < \varepsilon \leq 1/2$
and, in particular, satisfies the Cordes condition.
For convex domains $\Omega$, $f \in L^1(\Omega)$ with $f \geq 0$, and $g \in H^2(\Omega)$, 
the framework of \cite{SmearsSueli2013,SmearsSueli2014} provides a
unique strong solution $u_\varepsilon \in H^2(\Omega)$.
The main results in \Cref{sec:convergence} establish the
uniform convergence of $u_\varepsilon$ to $u$ in $\Omega$ as
$\varepsilon \searrow 0$ 
for right-hand sides $f$ from the class
$L^1_\uparrow(\Omega)$ defined in \Cref{def:L1plus} below
and sufficiently smooth boundary data $g \in H^2(\Omega) \cap C^{1,\beta}(\overline{\Omega})$
with $0 < \beta < 1$. 
Here, $u \in C(\overline{\Omega})$ is the Alexandrov solution to the 
Monge--Amp\`ere equation \eqref{pr:Monge-Ampere} defined in \Cref{ss:generalizedsol} below.
If $f \in C^{2,\alpha}(\Omega)$ 
with $0 < \alpha < 1$, $f > 0$ in $\overline{\Omega}$, 
and $g \equiv 0$, then the convergence rate
\begin{align*}
	\|u - u_\varepsilon\|_{L^\infty(\Omega)}
	\lesssim \varepsilon^{1/32}
\end{align*}
is established, which is of theoretical interest
(but possibly of limited practical impact
due to the very small exponent).
The solution $u_\varepsilon$ to the regularized problem
can be approximated with the FEM.
This paper utilizes the mixed finite element method in 
\cite{GallistlSueli2019} for the approximation of $u_\varepsilon$.
Although the particular choice of the finite element discretization
to the regularized problem is not essential,
we make this particular choice because it only requires
standard Lagrange finite element spaces and thus allows
for a rather transparent error analysis.
The application of other well-established methods for HJB equations
under the Cordes condition, e.g.~\cite{SmearsSueli2014,BrennerKawecki2021}, would also be possible.
An advantage of the regularized FEM approach is that
existence and uniqueness of meaningful discrete solutions
do not rely on assumptions on the mesh to be sufficiently fine.
The error estimates proven in this work are valid on arbitrary
coarse meshes, but good approximations may ---at least in theory---
require small mesh-sizes depending on the regularization.
The limitation of the approach is that
the results are restricted to two space dimensions due to the severe 
restrictions imposed by the Cordes condition in higher dimensions.

\subsection{Notation}
Standard notation for function spaces applies throughout this paper. Let $C^k(\Omega;\R^d)$ for $k \in \mathbb{N}$ denote the space of $k$-times continuously differentiable functions with values in $\R^d$. Given a positive parameter $0 < \alpha \leq 1$, the H\"older space $C^{k,\alpha}(\Omega;\R^d)$, endowed with the norm $\|\cdot\|_{C^{k,\alpha}(\Omega)}$, is the subspace of $C^k(\overline{\Omega};\R^d)$ such that all partial derivates of order $k$ are H\"older continuous with exponent $\alpha$.
The local H\"older space $C^{k,\alpha}_{\text{loc}}(\Omega;\R^d)$ denotes the set of $v \in C^k(\Omega;\R^d)$ with $\|v\|_{C^{k,\alpha}(\Omega')} < \infty$ for all compact subsets $\Omega' \Subset \Omega$ of $\Omega$.
For $d = 1$, abbreviate $C^k(\Omega) \coloneqq C^k(\Omega;\R)$, $C^{k,\alpha}(\Omega) \coloneqq C^{k,\alpha}(\Omega;\R)$, and $C^{k,\alpha}_{\text{loc}}(\Omega) \coloneqq C^{k,\alpha}_{\text{loc}}(\Omega;\R)$.
This convention also applies to Lebesgue and Sobolev functions $L^2(\Omega;\R^d)$, $H^1(\Omega;\R^d)$, $H^2(\Omega;\R^d)$ etc.
The space of Sobolev functions in $H^1(\Omega;\R^d)$ with vanishing tangential trace is denoted by $H^1_t(\Omega;\R^d)$. Throughout this paper, let $(\cdot,\cdot)_{L^2(\Omega)}$ denote the $L^2$ scalar product and abbreviate $V \coloneqq H^1_0(\Omega)$, $W \coloneqq H^1_t(\Omega;\R^2)$.

A sequence $(v_j)_{j \in \mathbb{N}}$ of continuous functions $v_j \in C(\Omega)$ converges locally uniformly to $v \in C(\Omega)$ if $\lim_{j \to \infty} \|v_j - v\|_{L^\infty(\Omega')} = 0$ holds for all compact subset $\Omega' \Subset \Omega$ of $\Omega$. If $\lim_{j \to \infty} \|v_j - v\|_{L^\infty(\Omega)} = 0$, then $(v_j)_{j \in \mathbb{N}}$ converges uniformly to $v$ in $\Omega$.
	
For $A,B \in \M$, the Euclidean scalar product $A:B \coloneqq \sum_{j,k = 1}^{2} A_{jk} B_{jk}$ induces the Frobenius norm $|A| \coloneqq \sqrt{A:A}$ in $\M$. 
The notation $|\cdot|$ also denotes the absolute value of a scalar
or the length of a vector.
The relation $A \leq B$ of $A,B \in \mathbb{S}$ holds whenever $B - A \in \mathbb{S}_+$ is a non-negative definite symmetric matrix.
The notation $a \lesssim b$ abbreviates $a \leq C b$ for a generic constant $C$ independent of the mesh-size and regularization parameter $\varepsilon$, and $a \approx b$ abbreviates $a \lesssim b \lesssim a$.
	
\subsection{Outline}
The remaining parts of this paper are organized as follows.
\Cref{sec:review-Monge-Ampere} reviews different concepts of weak solutions 
to the Monge--Amp\`ere equation \eqref{pr:Monge-Ampere} and their properties.
\Cref{sec:regularization} introduces
and analyzes the regularized PDE,
while \Cref{sec:convergence} establishes the convergence towards the solution to
the Monge--Amp\`ere equation \eqref{pr:Monge-Ampere}. \Cref{sec:discretization} presents the mixed finite element method from \cite{GallistlSueli2019} for the approximation of the solution to the regularized PDE. The numerical experiments in \Cref{sec:numerical-experiments} conclude this paper.

\section{Review on the Monge--Amp\`ere equation}\label{sec:review-Monge-Ampere}
This section recalls two different concepts of weak solutions to the Monge--Amp\`ere equation \eqref{pr:Monge-Ampere} in the literature.
Further details can be found
in the monographs \cite{Gutierrez2016,Figalli2017}.

\subsection{Generalized solution}\label{ss:generalizedsol}
Throughout the remaining parts of this paper, let $\Omega \subset \R^2$ be an open bounded Lipschitz domain in the plane.
For the existence and uniqueness of generalized solutions to the Monge--Amp\`ere equation \eqref{pr:Monge-Ampere} in \Cref{thm:existence-uniqueness-Monge-Ampere} below,
we will use the following
basic assumptions on the data.
It is not essential for our algorithm, but is required for existence and strict convexity of generalized solutions on the PDE level.
\begin{assumption}\label{assumption:structure}
The domain $\Omega \subset \R^2$ is open, bounded, and convex.
The right-hand side $f\in L^1(\Omega)$ is a nonnegative function
$f \geq 0$ and the boundary data is prescribed by
some $g\in C(\overline{\Omega})$.
If the boundary data is inhomogeneous, that is $g \neq 0$ on $\partial\Omega$,
then $\Omega$ is assumed to be strictly convex.
\end{assumption}
Any nonnegative function $f\in L^1(\Omega)$ induces
the Borel measure $\int_E f\d x$ for any measurable subset $E\subset\Omega$.
The most general weak solution concept to the Monge--Amp\`ere
equation admits Borel measures as right-hand sides.
Given a convex function $u \in C(\Omega)$, the subdifferential $\partial u(x) \subset \R^2$ of $u$ at $x \in \Omega$ is the set
\begin{align*}
	\partial u(x) \coloneqq \{a \in \R^2: u(x) + a\cdot (y-x) \leq u(y) \text{ for all } y \in \Omega\}.
\end{align*}
The Monge--Amp\`ere measure $\mu_u$ associated to $u$ is defined by
\begin{align}
	\mu_u(E) \coloneqq 
	\mathcal L^2(\cup_{x \in E} \partial u(x))
 \text{ for all Borel sets } E \subset \Omega,
	\label{def:Monge-Ampere-measure}
\end{align}
where $\mathcal L^2$ is the two-dimensional 
Lebesgue measure.
It turns out that $\mu_u$ is a Borel measure \cite{Alexandrov1958}
and, if $u \in C^2(\Omega)$, 
$\mu_u(E) = \mathcal L^2(\nabla u(E)) = \int_E \det \D^2 u\d x$.
This motivates the following weak solution
concept \cite{Alexandrov1958}.
\begin{definition}[Alexandrov solution]\label{def:Alexandrov}
	Given a Borel measure $\nu$ in $\Omega$ and $g \in C(\overline{\Omega})$, a convex function $u \in C(\overline{\Omega})$ 
	with the Monge--Amp\`ere measure $\mu_u$ from \eqref{def:Monge-Ampere-measure} is called Alexandrov solution to \eqref{pr:Monge-Ampere} if $\mu_u = \nu$ and $u = g$ on $\partial \Omega$.
\end{definition}

If the right-hand side is represented by a continuous function
$f \in C(\Omega)$, the concept of viscosity solutions
from the theory of fully nonlinear second-order PDEs 
\cite{CaffarelliCabre1995,Gutierrez2016}
applies.

\begin{definition}[convex viscosity solution]\label{def:convex-viscosity}
	Let $f \in C(\Omega)$ with $f \geq 0$. A convex function $u \in C(\Omega)$ is called viscosity subsolution (resp.~supersolution) to
	\begin{align}
		\det \D^2 u = f \text{ in } \Omega
		\label{def:det=f}
	\end{align}
	if, for all $x_0 \in \Omega$ and convex $\varphi \in C^2(\Omega)$ such that $u - \varphi$ has a local maximum (resp.~minimum) at $x_0$, it holds $\det \D^2 \varphi(x_0) \geq (\text{resp.}\leq)~f(x_0)$. If $u$ is a viscosity sub- and supersolution to \eqref{def:det=f}, then $u$ is called viscosity solution to \eqref{def:det=f}.
	For $g \in C(\overline{\Omega})$, $u$ is a viscosity solution to the Dirichlet problem \eqref{pr:Monge-Ampere} if additionally $u = g$ on $\partial \Omega$.
\end{definition}
The two preceding concepts of weak solutions coincide for $f \in C(\Omega)$.
\begin{theorem}[existence and uniqueness]\label{thm:existence-uniqueness-Monge-Ampere}
	Suppose that \Cref{assumption:structure} holds, then there exists a unique Alexandrov solution $u \in C(\overline{\Omega})$ 
	to the Monge--Amp\`ere equation \eqref{pr:Monge-Ampere} with (a)--(c).
	\begin{enumerate}
		\item[(a)] (viscosity) If $f \in C(\Omega)$, then $u$ is the unique viscosity solution to \eqref{pr:Monge-Ampere}.
		\item[(b)] (classical) If $f \in C^{0,\alpha}(\Omega)$, $0 < \lambda \leq f \leq \Lambda$, and $g \in C^{1,\beta}(\overline{\Omega})$ with positive constants $0 < \alpha,\beta < 1$ and $0 < \lambda \leq \Lambda$, then $u \in C(\overline{\Omega}) \cap C^{2,\alpha}_{\text{loc}}(\Omega)$. Given $R, r > 0$ such that $B(x_0,r) \subset \Omega \subset B(x_0,R)$ for some $x_0 \in \Omega$, then, for all compact subsets $\Omega' \Subset \Omega$ of $\Omega$,
		\begin{align}
			\|u\|_{C^{2,\alpha}(\Omega')} \leq C_1(\lambda,\Lambda,r,R,f,g,\alpha,\dist(\Omega',\partial \Omega)).
			\label{ineq:interior-C2-estimate}
		\end{align}
		\item[(c)] (Pogorelov) If $f \in C^{2,\alpha}(\Omega)$ with $\alpha \in (0,1)$, $f > 0$ in $\overline{\Omega}$, and $g \equiv 0$, then $u \in C(\overline{\Omega}) \cap C^{4,\alpha}_{\text{loc}}(\Omega)$ and, for all $x \in \Omega$,
		\begin{align}
			\dist(x,\partial \Omega)^8 |\D^2 u(x)| \leq C_2(r,R,\|\log f\|_{C^2(\overline{\Omega})}).
			\label{ineq:Pogorelov}
		\end{align}
	\end{enumerate}
\end{theorem}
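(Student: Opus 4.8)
The statement collects several classical facts about the Monge--Amp\`ere equation, and the plan is to assemble them from the monographs \cite{Gutierrez2016,Figalli2017} and the regularity literature, the only nonroutine point being the explicit form of the constants in \eqref{ineq:interior-C2-estimate} and \eqref{ineq:Pogorelov}. For existence and uniqueness I would first observe that, for $f \in L^1(\Omega)$ with $f \ge 0$, the set function $\nu(E) \coloneqq \int_E f \d x$ is a finite nonnegative Borel measure on $\Omega$; Alexandrov's solvability theorem for the Dirichlet problem of the Monge--Amp\`ere measure then yields a convex $u \in C(\overline{\Omega})$ with $\mu_u = \nu$ and $u = g$ on $\partial\Omega$, where the strict convexity of $\Omega$ in the inhomogeneous case $g \ne 0$ is precisely what guarantees that the boundary data are attained continuously (for affine, in particular vanishing, $g$ a general bounded convex $\Omega$ suffices). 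Uniqueness would follow from the comparison principle for the Monge--Amp\`ere measure: if $u_1, u_2 \in C(\overline{\Omega})$ are convex with $\mu_{u_1} \le \mu_{u_2}$ in $\Omega$ and $u_1 \ge u_2$ on $\partial\Omega$, then $u_1 \ge u_2$ in $\Omega$; applying this with the two orderings of $u_1, u_2$ when $\mu_{u_1} = \mu_{u_2}$ and $u_1 = u_2$ on $\partial\Omega$ gives $u_1 = u_2$.

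For part~(a), the plan is to invoke the equivalence, valid for continuous $f \in C(\Omega)$ with $f \ge 0$, between Alexandrov solutions and convex viscosity solutions of $\det \D^2 u = f$, together with the comparison principle for \eqref{def:det=f}; this identifies the Alexandrov solution constructed above as the unique convex viscosity solution of \eqref{pr:Monge-Ampere}. For part~(b), I would rely on Caffarelli's interior regularity theory: since $f$ is pinched between the positive constants $\lambda$ and $\Lambda$, the measure $\nu = f \d x$ is doubling, so $u$ is strictly convex in $\Omega$ and belongs to $C^{1,\sigma}_{\mathrm{loc}}(\Omega)$ for some $\sigma \in (0,1)$; with $f \in C^{0,\alpha}(\Omega)$ the interior Schauder estimates then upgrade this to $u \in C^{2,\alpha}_{\mathrm{loc}}(\Omega)$. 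The constant $C_1$ in \eqref{ineq:interior-C2-estimate} is obtained by tracking the dependencies along this chain: the sections and strict-convexity estimates depend only on $\lambda, \Lambda$, on the shape of $\Omega$ (controlled through $B(x_0,r) \subset \Omega \subset B(x_0,R)$, hence through $r$ and $R$) and on the boundary data, while the $C^{1,\sigma}$ and the Schauder steps additionally feel $\|f\|_{C^{0,\alpha}}$, $\alpha$, and $\dist(\Omega',\partial\Omega)$.

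For part~(c), I would first bootstrap the Schauder estimates, using $f \in C^{2,\alpha}(\Omega)$ with $f > 0$ on $\overline{\Omega}$, to obtain $u \in C^{4,\alpha}_{\mathrm{loc}}(\Omega)$, and then establish \eqref{ineq:Pogorelov} as a quantitative version of Pogorelov's interior second-derivative estimate. The standard proof applies the maximum principle to an auxiliary function of the form $w \coloneqq (-u)^p\, \partial_{\xi\xi} u\, \exp(|\nabla u|^2/2)$ with an arbitrary unit vector $\xi$ and a suitable power $p$; the boundary condition $g \equiv 0$, i.e.\ $u = 0$ on $\partial\Omega$, is essential because it forces $w$ to zero near $\partial\Omega$, so that the maximum of $w$ over $\Omega$ is attained at an interior point and is then controlled by $\|\log f\|_{C^2(\overline{\Omega})}$ alone; since $\Omega$ need not be smooth, one would run this argument on smooth strictly convex domains approximating $\Omega$ and pass to the limit, the constant being stable because it depends only on $r$, $R$ and $\|\log f\|_{C^2(\overline{\Omega})}$. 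To pass from the resulting bound on $(-u)^p |\D^2 u|$ to the weight $\dist(x,\partial\Omega)^8$ I would compare $-u$ from below and above with barriers built from the inscribed and circumscribed balls $B(x_0,r)$ and $B(x_0,R)$ (and the Alexandrov maximum principle for the upper bound), which pins $-u$ between suitable powers of $\dist(\cdot,\partial\Omega)$ and thereby produces both the exponent $8$ and the dependence on $r, R$ only.

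The step I expect to be the main obstacle is this last conversion in~(c): the regularity literature usually records Pogorelov's estimate with the weight $(-u)^p$ rather than a power of $\dist(\cdot,\partial\Omega)$, so the passage to the clean form \eqref{ineq:Pogorelov} --- and, in the same spirit in~(b), the careful accounting of exactly which geometric and data-dependent quantities enter the constant --- must be carried out by hand, even though each individual ingredient (Alexandrov solvability, the comparison principle, Caffarelli's theory, Pogorelov's estimate) is entirely classical.
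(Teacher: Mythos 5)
Your proposal is correct and follows essentially the same route as the paper, which likewise assembles the theorem from the classical references: Alexandrov's solvability and the comparison principle for Monge--Amp\`ere measures for existence and uniqueness, the equivalence of Alexandrov and convex viscosity solutions for (a), Caffarelli's strict convexity and interior $C^{2,\alpha}$ theory for (b), and the Pogorelov estimate with Schauder bootstrapping for (c) (the paper cites \cite[Theorem 3.9]{Figalli2017} directly for the distance-weighted form \eqref{ineq:Pogorelov}, so the conversion you flag as the main obstacle is already recorded in the literature). The extra detail you sketch on the underlying proofs is consistent with, but not required by, the paper's citation-based argument.
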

\begin{proof}
	The existence and uniqueness of Alexandrov solutions to \eqref{pr:Monge-Ampere} are due to Alexandrov \cite{Alexandrov1958}, cf.~also \cite[Corollary 2.11, Theorem 2.13--2.14]{Figalli2017} or \cite[Theorem 1.6.2]{Gutierrez2016}.
	If $f \in C(\Omega)$ in (a), then Alexandrov and convex viscosity solutions to \eqref{pr:Monge-Ampere} coincide \cite[Proposition 1.3.4]{Gutierrez2016}. 
	Notice that $f > \lambda$ and $g \in C^{1,\beta}(\overline{\Omega})$ imply strict convexity of $u$ \cite[Corollary 4.11]{Figalli2017}. In this case,
	the interior $C^{2,\alpha}$ estimate due to Caffarelli \cite{Caffarelli1990} proves (b), cf.~\cite[Corollary 4.43]{Figalli2017}.
	If $f \in C^{2,\alpha}(\Omega)$ and $f > 0$ in $\overline{\Omega}$, then $u \in C^{4,\alpha}_\mathrm{loc}(\Omega)$ \cite[Theorem 3.10]{Figalli2017} and \cite{Pogorelov1971} implies \eqref{ineq:Pogorelov}, cf.~\cite[Theorem 3.9]{Figalli2017}.
\end{proof}

\subsection{Hamilton--Jacobi--Bellman formulation}
The concept of viscosity solution to fully nonlinear PDEs arises from a comparison principle. Let $G \in C(\Omega \times \mathbb{S})$ satisfy the ellipticity condition
\begin{align}
	G(x,M) \leq G(x,N) \quad\text{for all } N \leq M.
	\label{ineq:ellipticity}
\end{align}

\begin{definition}[viscosity solution]\label{def:viscosity}
	A function $u \in C(\Omega)$ is called viscosity subsolution (resp.~supersolution) to
	\begin{align}
		G(x,\D^2 u(x)) = 0 \text{ in } \Omega \label{def:G=0}
	\end{align}
	if, for all $x_0 \in \Omega$ and $\varphi \in C^2(\Omega)$ such that $u - \varphi$ has a local maximum (resp.~minimum) at $x_0$, it holds $G(x_0, \D^2 \varphi(x_0)) \leq (\text{resp.}\geq)~0$.
	If $u$ is a viscosity sub- and supersolution to \eqref{def:G=0}, then $u$ is called viscosity solution to \eqref{def:G=0}.
	If additionally $u = g$ on $\partial \Omega$ for $g \in C(\overline{\Omega})$, then $u$ is a  viscosity solution to the Dirichlet problem
	\begin{align*}
		G(x,\D^2 u(x)) = 0 \text{ in } \Omega \quad\text{and}\quad u = g \text{ on } \partial \Omega.
	\end{align*}
\end{definition}

\begin{remark}[convexity]
The choice $G(x,M) \coloneqq f(x) - \det M$ is not elliptic on the
whole set $\mathbb{S}$ and so, \Cref{def:convex-viscosity} assumes 
the convexity of $u$ and of the test functions $\varphi$.
This problem does not arise for the HJB formulation with $G:=F$ 
from \eqref{def:F}.
\end{remark}

Notice that if $u \in C^2(\Omega)$ with $G(x,\D^2 u(x)) \leq (\text{resp.}\geq)~0$ in $\Omega$, then $u$ is viscosity subsolution (resp.~supersolution) to \eqref{def:G=0} by \Cref{def:viscosity}.
The equivalence of \eqref{pr:Monge-Ampere} and \eqref{pr:Monge-Ampere-HJB} below is established in \cite{Krylov1987} for classical solutions $u \in C^2(\Omega)$ and extended to viscosity solutions in \cite[Theorem 3.3, Theorem 3.5]{FengJensen2017}.
\begin{theorem}[equivalence]
	Let \Cref{assumption:structure} hold. If $f \in C(\Omega)$, then $u \in C(\overline{\Omega})$ is a convex viscosity solution to \eqref{pr:Monge-Ampere} in the sense of \Cref{def:convex-viscosity} if and only if $u$ is a viscosity solution to \eqref{pr:Monge-Ampere-HJB} in the sense of \Cref{def:viscosity}.
\end{theorem}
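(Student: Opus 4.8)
The plan is to reduce the equivalence to a pointwise algebraic identity for the Bellman operator $F$ and then to reconcile the two notions of viscosity solution, the subtlety being that \Cref{def:viscosity} (with $G=F$) admits \emph{all} $C^2$ test functions whereas \Cref{def:convex-viscosity} only admits \emph{convex} ones. The lemma I would establish first reads: for every $x\in\Omega$ and every $M\in\mathbb{S}$, if $M\in\mathbb{S}_+$ then
\begin{align*}
  F(f;x,M)=\tfrac12\Bigl(\sqrt{(\tr M)^2-4\det M+4f(x)}-\tr M\Bigr),
\end{align*}
while $F(f;x,M)\ge-\lambda_{\min}(M)>0$ if $M\notin\mathbb{S}_+$; hence for $M\in\mathbb{S}_+$ one has $F(f;x,M)\le 0\iff\det M\ge f(x)$, $F(f;x,M)\ge 0\iff\det M\le f(x)$, and $F(f;x,M)=0\iff\det M=f(x)$. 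The displayed formula is a short optimization: an orthogonal change of basis reduces to $M=\mathrm{diag}(\mu_1,\mu_2)$, and since $A:M=A_{11}\mu_1+A_{22}\mu_2$ is insensitive to the off-diagonal entry of $A$ while $\det A$ is maximal for diagonal $A$, the supremum in \eqref{def:F} is attained at $A=\mathrm{diag}(a,1-a)$ with $a\in[0,1]$; the substitution $s=1-2a$ turns the remaining one-dimensional problem into $\max_{s\in[-1,1]}\bigl(s(\mu_1-\mu_2)+2\sqrt{f(x)}\sqrt{1-s^2}\bigr)=\sqrt{(\mu_1-\mu_2)^2+4f(x)}$, and $(\mu_1-\mu_2)^2=(\tr M)^2-4\det M$ gives the claim. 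The lower bound for $M\notin\mathbb{S}_+$ follows by testing \eqref{def:F} with the orthogonal projection onto a line, which yields $F(f;x,M)\ge-e^{\top}Me$ for every unit vector $e$.

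For the implication from \eqref{pr:Monge-Ampere} to \eqref{pr:Monge-Ampere-HJB}, let $u$ be a convex viscosity solution of \eqref{pr:Monge-Ampere} and let $\varphi\in C^2(\Omega)$ with $u-\varphi$ attaining a local maximum at $x_0$. A supporting affine function of the convex function $u$ at $x_0$ lies below $\varphi$ near $x_0$ (up to an additive constant) with contact at $x_0$, so $\D^2\varphi(x_0)\in\mathbb{S}_+$. Replacing $\varphi$ by its second-order Taylor polynomial at $x_0$ plus $\tfrac{\sigma}{2}|\cdot-x_0|^2$ produces a globally strictly convex quadratic touching $u$ from above at $x_0$, hence an admissible test function for \Cref{def:convex-viscosity}; letting $\sigma\searrow0$ gives $\det\D^2\varphi(x_0)\ge f(x_0)$, so $F(f;x_0,\D^2\varphi(x_0))\le0$ by the lemma. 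If instead $u-\varphi$ has a local minimum at $x_0$, then $F(f;x_0,\D^2\varphi(x_0))\ge0$ follows directly from the lemma when $\D^2\varphi(x_0)\notin\mathbb{S}_+$, or when $\D^2\varphi(x_0)\in\mathbb{S}_+$ with $\det\D^2\varphi(x_0)=0$ (using $f\ge0$); and when $\D^2\varphi(x_0)$ is positive definite, the Taylor polynomial minus $\tfrac{\sigma}{2}|\cdot-x_0|^2$ is, for small $\sigma$, a strictly convex admissible test function, so \Cref{def:convex-viscosity} and then the lemma give $\det\D^2\varphi(x_0)\le f(x_0)$ and $F(f;x_0,\D^2\varphi(x_0))\ge0$ after $\sigma\searrow0$. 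Therefore $u$ solves \eqref{pr:Monge-Ampere-HJB}.

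For the converse, let $u$ solve \eqref{pr:Monge-Ampere-HJB}. The essential step is that $u$ is convex: if $\varphi\in C^2(\Omega)$ and $u-\varphi$ has a local maximum at $x_0$, then the subsolution property together with $F(f;x_0,\D^2\varphi(x_0))\ge-\lambda_{\min}(\D^2\varphi(x_0))$ forces $\D^2\varphi(x_0)\in\mathbb{S}_+$, and a continuous function such that every $C^2$ function touching it from above has positive semidefinite Hessian at the contact point is convex. (One way to see this: mollification preserves viscosity subsolutions of the convex operator $M\mapsto-\lambda_{\min}(M)$, which reduces the claim to $\D^2u_\rho\ge0$ in the classical sense on interior convex subdomains; alternatively, were $u$ not midpoint-convex on some segment, one could slide a $C^2$ paraboloid with indefinite Hessian so as to touch $u$ from above there, a contradiction, and continuity upgrades midpoint convexity to convexity.) Once $u$ is known convex, a convex test function $\varphi$ automatically satisfies $\D^2\varphi(x_0)\in\mathbb{S}_+$, and the lemma turns $F(f;x_0,\D^2\varphi(x_0))\le0$ into $\det\D^2\varphi(x_0)\ge f(x_0)$ and $F(f;x_0,\D^2\varphi(x_0))\ge0$ into $\det\D^2\varphi(x_0)\le f(x_0)$, which are exactly the requirements of \Cref{def:convex-viscosity}. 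Since the Dirichlet condition $u=g$ on $\partial\Omega$ is common to both formulations, the equivalence follows.

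I expect the main obstacle to be precisely this convexity step in the converse direction, that is, converting the unrestricted test-function class of \Cref{def:viscosity} into the convexity constraint of \Cref{def:convex-viscosity}; a secondary point requiring care is the reduction in the forward direction from an arbitrary $C^2$ test function to an admissible convex (indeed quadratic) one. The algebraic lemma, though the computational core, is only a routine one-variable optimization.
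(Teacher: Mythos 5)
Your argument is correct in substance, but it is worth knowing that the paper does not prove this theorem at all: its ``proof'' is a citation, attributing the classical case to Krylov and the extension to viscosity solutions to Feng--Jensen (Theorems 3.3 and 3.5 there). What you have written is essentially a self-contained reconstruction of those references. Your algebraic lemma --- the closed form $F(f;x,M)=\tfrac12\bigl(\sqrt{(\tr M)^2-4\det M+4f(x)}-\tr M\bigr)$ on $\mathbb{S}_+$ together with the bound $F\ge -\lambda_{\min}(M)$ obtained by testing with rank-one projections $A=ee^{\top}\in\mathbb{S}(0)$ --- is exactly Krylov's identity, and it correctly yields the sign equivalences $F\lessgtr 0\iff\det M\gtrless f$ on $\mathbb{S}_+$ (your one-dimensional optimization checks out, including the reduction to diagonal $A$). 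The forward direction, with the supporting-hyperplane argument forcing $\D^2\varphi(x_0)\in\mathbb{S}_+$ at a touching-from-above point and the $\pm\tfrac{\sigma}{2}|\cdot-x_0|^2$ perturbation of the Taylor polynomial to manufacture admissible globally convex test functions, is sound. The converse correctly identifies the crux: recovering convexity of $u$ from the unrestricted subsolution property, via $F\le 0\Rightarrow\lambda_{\min}(\D^2\varphi(x_0))\ge 0$, i.e.\ $\D^2u\ge 0$ in the viscosity sense. This is precisely where Feng--Jensen do the real work, and it is the one place where your write-up is only a sketch: of your two suggested justifications, the sliding-paraboloid/restriction-to-lines argument is the standard and reliable one (reduce to $\partial^2_{ee}u\ge 0$ in the viscosity sense for each fixed direction $e$, pass to the distributional sense for this linear operator, and conclude convexity on the convex domain $\Omega$); the mollification remark would need more care as stated. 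If you were writing this out in full, that convexity lemma deserves a precise reference or a complete proof; everything else is routine and correct.
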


\section{Regularization}\label{sec:regularization}
This section is devoted to the convergence analysis of the regularized PDE introduced below.
\subsection{Regularized HJB formulation}
Given a regularization parameter $0 < \varepsilon \leq 1/2$, 
define 
$\mathbb{S}(\varepsilon) \coloneqq 
    \{A \in \mathbb{S}(0): 
       \varepsilon I_{2\times 2}\leq A\}
$.
Given $f \in L^1(\Omega)$ with $f \geq 0$ and $g \in H^2(\Omega)$,
the regularized PDE seeks a strong solution $u_\varepsilon \in H^2(\Omega)$ to
\begin{align}
	F_\varepsilon(f;x,\D^2 u_\varepsilon(x)) = 0 \text{ a.e.~in } \Omega \quad\text{and}\quad u_\varepsilon = g \text{ on } \partial \Omega
	\label{pr:HJB-regularized}
\end{align}
with, for all $x \in \Omega$ and $M \in \M$,
\begin{align}
	F_\varepsilon(f;x,M) \coloneqq \sup_{A \in \mathbb{S}(\varepsilon)} (-A:M + 2\sqrt{f(x)\det A})
	\label{def:F-regularized}
\end{align}
The difference of \eqref{pr:HJB-regularized}--\eqref{def:F-regularized}
compared to
\eqref{pr:Monge-Ampere-HJB}--\eqref{def:F} is that the parameter
space $\mathbb S(0)$ is replaced by the subset of those
matrices $A$ that have eigenvalues not less than $\varepsilon$.
We remark that the supremum
on the right-hand side of \eqref{def:F-regularized} 
does not change if $\mathbb{S}(\varepsilon)$ 
is replaced by the subspace of matrices
in $\mathbb{S}(\varepsilon)$
with rational coefficients.
Hence, for all $v \in H^2(\Omega)$, $F_\varepsilon(f;\cdot,\D^2 v) : \Omega \to \R$ is a measurable function \cite[Theorem 9.5]{Heinz2001}; the relations
$|A| \leq 1$ and $f \in L^1(\Omega)$ 
imply $F_\varepsilon(f;\cdot,\D^2 v) \in L^2(\Omega)$.
Since $H^2(\Omega) \subset C^{0,\vartheta}(\overline{\Omega})$ 
for all $0 < \vartheta < 1$ 
by the Sobolev embedding
\cite[Theorem 4.12 (II)]{AdamsFournier2003},
the Dirichlet boundary condition in
\eqref{pr:HJB-regularized} is enforced pointwise.
The subsequent two lemmas provide fundamental properties of the PDE 
\eqref{pr:HJB-regularized}.
\begin{lemma}[convexity and uniform ellipticity of $F_\varepsilon$]
	\label{lem:properties-F}
	Given $0 < \varepsilon \leq 1/2$. If $f \in C(\Omega)$,
	then, for all $x\in\Omega$, $F_\varepsilon(f;x,\cdot) : \mathbb{S} \to \R$ 
	is convex
	and uniformly elliptic in the sense that, for all 
	$M,N \in \mathbb{S}$ with $N \geq 0$,
	\begin{align}
		\varepsilon |N| \leq F_\varepsilon(f; x, M) - F_\varepsilon(f; x, M + N) \leq (\varepsilon^2 + (1-\varepsilon)^2)^{1/2}|N|.
		\label{ineq:uniform-ellipticity}
	\end{align}
\end{lemma}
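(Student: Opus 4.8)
The plan is to exploit the structure of $F_\varepsilon(f;x,\cdot)$ as a supremum of affine functions of $M$. For fixed $x\in\Omega$, write $\ell_A(M)\coloneqq -A:M+2\sqrt{f(x)\det A}$, so that $F_\varepsilon(f;x,M)=\sup_{A\in\mathbb S(\varepsilon)}\ell_A(M)$. Since each $\ell_A$ is affine in $M$, the pointwise supremum over the (nonempty) index set $\mathbb S(\varepsilon)$ is convex in $M$; this gives the convexity claim immediately. For the uniform ellipticity, I would first record the elementary algebraic facts about the parameter set: every $A\in\mathbb S(\varepsilon)$ satisfies $\varepsilon I_{2\times 2}\le A$, $\tr A=1$, and $A\ge 0$, hence its eigenvalues $\lambda_1,\lambda_2$ lie in $[\varepsilon,1-\varepsilon]$ with $\lambda_1+\lambda_2=1$. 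Consequently $|A|^2=\lambda_1^2+\lambda_2^2=\lambda_1^2+(1-\lambda_1)^2$, which as a function of $\lambda_1\in[\varepsilon,1-\varepsilon]$ attains its minimum $1/2$ at $\lambda_1=1/2$ and its maximum $\varepsilon^2+(1-\varepsilon)^2$ at the endpoints. In particular $\varepsilon\le|A|\le(\varepsilon^2+(1-\varepsilon)^2)^{1/2}$ for all $A\in\mathbb S(\varepsilon)$; the lower bound $|A|\ge\varepsilon$ also follows directly from $A\ge\varepsilon I$.

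Next I would prove the two-sided estimate \eqref{ineq:uniform-ellipticity} for $N\in\mathbb S$ with $N\ge 0$. The key observation is that the difference of the two suprema can be controlled by comparing the \emph{same} maximizer. For the upper bound, pick $A^\ast\in\mathbb S(\varepsilon)$ attaining (or nearly attaining) the supremum defining $F_\varepsilon(f;x,M+N)$; then
\begin{align*}
	F_\varepsilon(f;x,M)-F_\varepsilon(f;x,M+N)
	&\le \ell_{A^\ast}(M)-\ell_{A^\ast}(M+N)+\text{(slack)}\\
	&= A^\ast:N+\text{(slack)}\le |A^\ast|\,|N|\le(\varepsilon^2+(1-\varepsilon)^2)^{1/2}|N|,
\end{align*}
where in the last step I use the Cauchy--Schwarz inequality $A^\ast:N\le|A^\ast|\,|N|$ together with the maximal-norm bound above, and the slack term vanishes once the genuine maximizer is used (the supremum is attained because $\mathbb S(\varepsilon)$ is compact and $A\mapsto\ell_A(M+N)$ is continuous). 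Symmetrically, for the lower bound I would take $A^\ast$ to be the maximizer for $F_\varepsilon(f;x,M)$ and estimate from below:
\begin{align*}
	F_\varepsilon(f;x,M)-F_\varepsilon(f;x,M+N)
	&\ge \ell_{A^\ast}(M)-\ell_{A^\ast}(M+N)=A^\ast:N\ge\varepsilon\,\tr N=\varepsilon|N|,
\end{align*}
where $A^\ast:N\ge\varepsilon\,\tr N$ follows from $A^\ast-\varepsilon I\ge 0$ and $N\ge 0$ (the Frobenius inner product of two positive semidefinite matrices is nonnegative), and $\tr N=|N|$ holds because a symmetric positive semidefinite $2\times 2$ matrix has $|N|^2=\lambda_1^2+\lambda_2^2\le(\lambda_1+\lambda_2)^2=(\tr N)^2$ — wait, that gives $|N|\le\tr N$, so actually I get $A^\ast:N\ge\varepsilon\tr N\ge\varepsilon|N|$, which is exactly what is needed.

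The one subtlety worth treating carefully is the attainment of the suprema, so that the "slack" terms can be dropped rather than carried as an $\eta>0$ throughout. Since $\mathbb S(\varepsilon)$ is a closed and bounded subset of the finite-dimensional space $\mathbb S$, it is compact; and for each fixed $x$ and each fixed symmetric matrix argument, $A\mapsto -A:M+2\sqrt{f(x)\det A}$ is continuous on $\mathbb S(\varepsilon)$ (note $f(x)\ge 0$ and $\det A\ge\varepsilon^2>0$ there, so the square root is well defined and continuous). Hence the supremum is a maximum, and one may fix genuine maximizers in both of the comparisons above. This is the only place where compactness of the regularized parameter set — as opposed to $\mathbb S(0)$ — is exploited, and it is what makes the argument clean; I do not expect any real obstacle here, only the need to state the compactness/continuity remark before running the two estimates. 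With attainment in hand, the chain of inequalities is routine Cauchy--Schwarz and positive-semidefinite trace bounds, and $N\mapsto\tr N=|N|$ for $N\ge 0$ in $\mathbb S$ closes both sides.
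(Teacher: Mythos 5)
Your overall strategy is sound and essentially the same as the paper's: convexity as a supremum of affine functions, the upper bound via Cauchy--Schwarz and the eigenvalue computation $|A|^2=\lambda_1^2+(1-\lambda_1)^2\le\varepsilon^2+(1-\varepsilon)^2$ on $\mathbb S(\varepsilon)$, and the lower bound via $A:N\ge\varepsilon\,\tr N\ge\varepsilon|N|$ for $N\ge0$ (the paper reaches the same endpoint by diagonalizing $N$ and minimizing $t\varrho_1+(1-t)\varrho_2$ over $t\in[\varepsilon,1-\varepsilon]$; your positive-semidefinite inner-product argument is an equally valid shortcut). All the algebraic facts you invoke are correct.

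However, in both comparison steps you attach the maximizer $A^\ast$ to the wrong supremum, so the displayed inequalities do not follow from the stated choices. For the \emph{upper} bound you take $A^\ast$ to (nearly) attain $F_\varepsilon(f;x,M+N)$; but then $F_\varepsilon(f;x,M)\ge\ell_{A^\ast}(M)$ and $F_\varepsilon(f;x,M+N)=\ell_{A^\ast}(M+N)$ combine to give
$F_\varepsilon(f;x,M)-F_\varepsilon(f;x,M+N)\ge\ell_{A^\ast}(M)-\ell_{A^\ast}(M+N)$,
a \emph{lower} bound, not the claimed $\le$. To get the upper bound you must let $A^\ast$ attain the supremum defining $F_\varepsilon(f;x,M)$, so that $F_\varepsilon(f;x,M)=\ell_{A^\ast}(M)$ while $F_\varepsilon(f;x,M+N)\ge\ell_{A^\ast}(M+N)$. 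Symmetrically, your lower-bound step uses the maximizer of $F_\varepsilon(f;x,M)$ where it should use the maximizer of $F_\varepsilon(f;x,M+N)$. The slip is harmless in substance because your estimates $\varepsilon|N|\le A:N\le(\varepsilon^2+(1-\varepsilon)^2)^{1/2}|N|$ hold \emph{uniformly} for all $A\in\mathbb S(\varepsilon)$, so swapping the two choices of $A^\ast$ repairs the proof with no further changes; but as written the logic of both comparisons is reversed and should be corrected. (The compactness/attainment discussion is fine, though dispensable: carrying an $\eta$-slack and letting $\eta\to0$ works just as well, which is effectively what the paper's abstract inequalities $\sup(X+Y)\le\sup X+\sup Y$ encode.)
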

\begin{proof}
The asserted convexity readily follows
from the inequality 
$\sup (X + Y) \leq \sup X + \sup Y$
for any pair of sets $X,Y\subset\mathbb R$ of real numbers.
The elementary estimate
$\sup (-X) - \sup (-(X+Y)) \leq \sup Y$
shows that any $M, N \in \mathbb{S}$ satisfy
\begin{align*}
	F_\varepsilon(f; x, M) - F_\varepsilon(f; x, M + N)
	\leq \sup_{A \in \mathbb{S}(\varepsilon)} A:N.
\end{align*}
Since any matrix $A \in \mathbb{S}(\varepsilon)$ satisfies
$A : N  \leq |A| |N| 
\leq (\varepsilon^2 + (1-\varepsilon)^2)^{1/2}|N|$,
this shows the
asserted upper bound in \eqref{ineq:uniform-ellipticity}.
The proof of the lower bound in \eqref{ineq:uniform-ellipticity} is partly contained in \cite[p.~50--51]{Krylov1987}. We outline the main arguments below.
The inequality $\sup (X + Y) \leq \sup X + \sup Y$ implies
\begin{align}\label{ineq:proof-uniform-ellipticity-LHS}
	F_\varepsilon(f; x, M + N) 
	\leq F_\varepsilon(f; x, M) 
	+ \sup_{A \in \mathbb{S}(\varepsilon)} (-A:N).
\end{align}
Let $0 \leq \varrho_1(N) \leq \varrho_2(N)$ denote 
the non-negative eigenvalues of the matrix $N$. Since eigenvalues are invariant under similarity transformations, it holds
\begin{align}
	-\sup_{A \in \mathbb{S}(\varepsilon)} (-A:N) = \inf_{A \in \mathbb{S}(\varepsilon)} A:N = \inf_{A \in \mathbb{S}(\varepsilon)} A:\begin{pmatrix}
		\varrho_1(N) & 0\\0 & \varrho_2(N)
	\end{pmatrix}.
	\label{ineq:proof-ellipticity-lower-bound}
\end{align}
In two space dimensions, the diagonal elements of any positive definite matrix $A \in \mathbb{S}(\varepsilon)$ is in the interval $[\varepsilon, 1 -\varepsilon]$. Hence, the infima in \eqref{ineq:proof-ellipticity-lower-bound} are equal to $\inf_{\varepsilon \leq t \leq 1 - \varepsilon} (t \varrho_1(N) + (1 - t)\varrho_2(N))$. Since $0 \leq \varrho_1(N) \leq \varrho_2(N)$, the infimum is attained at $t = 1 - \varepsilon$ with the lower bound $\varepsilon (\varrho_1(N) + \varrho_2(N)) \leq \inf_{A \in \mathbb{S}(\varepsilon)} A:N$. This, the inequality $|N| = \sqrt{\varrho_1(N)^2 + \varrho_2(N)^2} \leq \varrho_1(N) + \varrho_2(N)$, and \eqref{ineq:proof-uniform-ellipticity-LHS} prove the asserted lower bound in	\eqref{ineq:uniform-ellipticity}.
\end{proof}

The following lemma states a comparison principle
for viscosity sub- and supersolutions
defined in \Cref{def:viscosity}.

\begin{lemma}[comparison principle]\label{lem:comparison-principle}
Let $f \in C^{0,\alpha}(\Omega)$ 
with $0 < \alpha < 1$ and let $v_* \in C(\overline{\Omega})$
and $v^* \in C(\overline{\Omega})$ 
be a sub- and supersolution to 
$F_\varepsilon(f;x,\D^2 v(x)) = 0$ in $\Omega$.
If $v_* \leq v^*$ on $\partial \Omega$, then $v_* \leq v^*$ in $\overline{\Omega}$.
\end{lemma}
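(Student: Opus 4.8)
The plan is to deduce the comparison principle from the standard theory of viscosity solutions to uniformly elliptic Hamilton--Jacobi--Bellman equations, using the structural properties of $F_\varepsilon$ already established. First I would record the relevant properties of the operator: by \Cref{lem:properties-F}, for $f \in C^{0,\alpha}(\Omega)$ the map $M \mapsto F_\varepsilon(f;x,M)$ is convex, satisfies the ellipticity condition \eqref{ineq:ellipticity}, and is uniformly elliptic with the two-sided bound \eqref{ineq:uniform-ellipticity}. Writing $F_\varepsilon$ explicitly as the supremum in \eqref{def:F-regularized} over the \emph{compact} set $\mathbb S(\varepsilon)$, one sees that $F_\varepsilon(f;x,M) = \sup_{A\in\mathbb S(\varepsilon)}\bigl(-A:M + c_A(x)\bigr)$ with linear-in-$M$ terms and zeroth-order coefficient $c_A(x) = 2\sqrt{f(x)\det A}$ that is (H\"older) continuous in $x$ uniformly in $A$, and crucially has \emph{no} dependence on $u$ itself (no zeroth-order term in $u$). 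Thus \eqref{pr:HJB-regularized} is a proper, uniformly elliptic HJB equation with continuous ingredients and a convex Hamiltonian.

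Next I would invoke the comparison principle for such equations. The cleanest route is the general comparison result for viscosity solutions of uniformly elliptic second-order equations $G(x,\D^2 u)=0$ that are proper and satisfy the structure condition of the Ishii--Lions / Crandall--Ishii--Lions theory (the "User's Guide" \cite{CrandallIshiiLions1992}); the bound \eqref{ineq:uniform-ellipticity} supplies uniform ellipticity, and uniform continuity in $x$ of $A\mapsto 2\sqrt{f(x)\det A}$ (using $f\in C^{0,\alpha}$, hence bounded and uniformly continuous on compacta, together with $|\det A|\le 1/4$ on $\mathbb S(\varepsilon)$) supplies the modulus-of-continuity hypothesis. Since there is no dependence on $u$ of the right sign, properness must be obtained by the standard device of perturbing $v^*$ (or $v_*$) by a small strictly convex function: for $\eta>0$ consider $v^*_\eta := v^* + \eta\, w$ where $w$ is smooth, strictly convex with $\D^2 w \ge I$; by the upper bound in \eqref{ineq:uniform-ellipticity}, $v^*_\eta$ is a strict supersolution, i.e.\ $F_\varepsilon(f;x,\D^2 v^*_\eta) \le -\varepsilon\eta < 0$ in the viscosity sense, while $v_* \le v^*_\eta$ on $\partial\Omega$. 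One then proves $v_* \le v^*_\eta$ in $\overline\Omega$ by the doubling-of-variables argument and lets $\eta\downarrow 0$.

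For the doubling argument I would follow the classical template: assume for contradiction that $\max_{\overline\Omega}(v_* - v^*_\eta) =: m > 0$, double variables with penalisation $\Phi_j(x,y) := v_*(x) - v^*_\eta(y) - \tfrac{j}{2}|x-y|^2$, take a maximum point $(x_j,y_j)$, use the standard lemma that $j|x_j-y_j|^2 \to 0$ and that the maximum is attained in the interior for large $j$ (because on $\partial\Omega$ the difference is $\le 0$ while $m>0$), apply the Crandall--Ishii lemma to produce matrices $X_j \le Y_j$ in the semijets, and derive
\begin{align*}
0 \le F_\varepsilon(f;x_j,X_j) - F_\varepsilon(f;y_j,Y_j) + \varepsilon\eta
\le \bigl(\text{ellipticity term}\bigr) + \bigl(\text{modulus in }x\bigr) + \varepsilon\eta,
\end{align*}
where the ellipticity term is $\le 0$ since $X_j \le Y_j$ and \eqref{ineq:ellipticity} holds, and the $x$-modulus term tends to $0$ as $j\to\infty$ thanks to $j|x_j-y_j|^2\to 0$ and uniform continuity of $x\mapsto 2\sqrt{f(x)\det A}$; letting $j\to\infty$ gives $0 \le \varepsilon\eta$ — which is consistent, so to reach a contradiction one instead runs the argument with the \emph{strict} inequality and concludes $m$ cannot be positive once $\eta\to 0$, or, more cleanly, one keeps $\eta$ fixed and compares $v_*$ with the strict supersolution to get a strict sign, yielding the contradiction directly. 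The main obstacle is exactly this handling of properness: because $F_\varepsilon$ has no favourable zeroth-order term in $u$, a naive application of comparison fails, and the perturbation-by-a-strictly-convex-function trick (legitimate precisely because $F_\varepsilon$ is uniformly elliptic, by \eqref{ineq:uniform-ellipticity}) is what makes the argument go through; the remaining ingredients — uniform continuity in $x$, the Crandall--Ishii lemma, interior attainment of the doubled maximum — are routine. Alternatively, since the Hamiltonian is convex in $M$ and the operator is uniformly elliptic, one may directly cite a comparison principle for HJB equations under the Cordes-type condition as in \cite{SmearsSueli2014} once strong solutions are regularised, but the viscosity-theoretic proof above is self-contained given the results already in this paper.
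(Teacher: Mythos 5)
Your overall architecture is legitimate and genuinely different from the paper's: you propose a self-contained doubling-of-variables proof, whereas the paper simply verifies the hypotheses of a packaged comparison theorem (\cite{Koike2004}, Theorem 3.9) --- in particular the modulus $|F_\varepsilon(f;x,M)-F_\varepsilon(f;y,M)|\le \|f\|_{C^{0,\alpha}(\Omega)}^{1/2}|x-y|^{\alpha/2}$, which is exactly the estimate you identify via $\det A\le 1/4$ and $|\sqrt{f(x)}-\sqrt{f(y)}|\le\sqrt{|f(x)-f(y)|}$. You also correctly isolate the real difficulty: $F_\varepsilon$ has no zeroth-order dependence on $u$, so properness must be manufactured by a perturbation that exploits the uniform ellipticity of \Cref{lem:properties-F}.

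However, your perturbation goes in the wrong direction, and this is where the written argument fails. Under the paper's sign convention (\Cref{def:viscosity}), $F_\varepsilon(f;x,\cdot)$ is monotone \emph{decreasing} on $\mathbb{S}$ (see \eqref{def:F-regularized} and \eqref{ineq:uniform-ellipticity}), and a \emph{strict supersolution} must satisfy $F_\varepsilon(f;x_0,\D^2\varphi(x_0))\ge c>0$ at touching points from below, not ``$\le-\varepsilon\eta<0$'' as you write. Adding a convex $\eta w$ with $\D^2w\ge I$ to $v^*$ pushes the Hessian up and hence pushes $F_\varepsilon$ \emph{down}: the lower bound in \eqref{ineq:uniform-ellipticity} gives $F_\varepsilon(f;x,M+\eta\D^2w)\le F_\varepsilon(f;x,M)-\sqrt{2}\,\varepsilon\eta$, and since the supersolution property only bounds $F_\varepsilon(f;x,\D^2v^*)$ from \emph{below}, the function $v^*+\eta w$ need not be a supersolution at all (model case $-\Delta u+f$: $\Delta v^*\le f$ does not imply $\Delta v^*+2\eta\le f$). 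The correct move is the mirror image: choose $w$ convex with $\D^2w\ge I$ and $w\le0$ on $\overline\Omega$ (e.g.\ $w=\tfrac12(|x-x_0|^2-R^2)$ with $\Omega\subset B(x_0,R)$) and perturb the \emph{sub}solution, $v_{*,\eta}\coloneqq v_*+\eta w\le v_*$; then $v_{*,\eta}\le v^*$ on $\partial\Omega$ is preserved, and the same inequality now yields $F_\varepsilon(f;x,\D^2\varphi)\le-\sqrt{2}\,\varepsilon\eta<0$ at maxima of $v_{*,\eta}-\varphi$, i.e.\ $v_{*,\eta}$ is a strict subsolution (equivalently, $v^*-\eta w\ge v^*$ is a strict supersolution). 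With this fix the doubling argument closes cleanly, $\sqrt{2}\,\varepsilon\eta\le F_\varepsilon(f;y_j,Y_j)-F_\varepsilon(f;x_j,X_j)\le C|x_j-y_j|^{\alpha/2}\to0$ using $X_j\le Y_j$ and the $x$-modulus, whereas your version ends in ``$0\le\varepsilon\eta$, which is consistent'' --- a symptom of the sign error rather than something to be repaired by ``running the argument with the strict inequality''.
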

\begin{proof}
We show the lemma by verifying that $F_\varepsilon$
satisfies the structural assumptions of the
general result \cite[Theorem 3.9]{Koike2004}.
The elementary estimate $|\sup X -\sup Y| \leq \sup |X-Y|$
for bounded sets $X,Y$ of real numbers
together with
$\det A \leq 1/4$ for all $A \in \mathbb{S}(0)$ 
and $f \in C^{0,\alpha}(\Omega)$
show, for any $x,y\in\Omega$ and $M\in\mathbb S$, that
\begin{align*}
	|F_\varepsilon(f;x,M) - F_\varepsilon(f;y,M)| 
	\leq |\sqrt{f(x)} - \sqrt{f(y)}|
	\leq \sqrt{|f(x) - f(y)|} \leq C|x-y|^{\alpha/2}
\end{align*}
for $C \coloneqq \|f\|_{C^{0,\alpha}(\Omega)}^{1/2}$.
This, the uniform ellipticity from \Cref{lem:properties-F} above,
and \cite[Proposition 3.8]{Koike2004} prove the critical assumption
(3.21) in \cite[Theorem 3.9]{Koike2004},
which implies the comparison principle.
\end{proof}

\subsection{Analysis of the regularized PDE}\label{sec:analysis-regularized-PDE}
The regularized PDE \eqref{pr:HJB-regularized}
is uniformly elliptic, which
is sufficient for its well-posedness in two space dimensions.
Results of this type were established
by \cite{SmearsSueli2014}. Following their approach,
we define the scaling function, for any positive definite matrix $A$,
\begin{equation*}
 \gamma(A) \coloneqq \frac{\tr A}{|A|^2}.
\end{equation*}
The scaled version of $F_\varepsilon$ reads
\begin{align}
	F_{\gamma,\varepsilon}(f; x, M) 
	\coloneqq \sup_{A \in \mathbb{S}(\varepsilon)}
	\big(\gamma(A)(-A:M + 2\sqrt{f(x)\det A})\big)
	\label{def:F-gamma-eps}
\end{align}
for all $x \in \Omega$ and $M \in \M$.
The following theorem utilizes the constants
$$ 
	2\varepsilon \leq \delta(\varepsilon) \coloneqq 2\varepsilon(1 - \varepsilon)
	/(\varepsilon^2 + (1-\varepsilon)^2) ~\leq~ 1
	\quad\text{and}\quad
	C(\varepsilon) \coloneqq (1 + \sqrt{2})/(1 - \sqrt{1 - \delta(\varepsilon)}) .
$$
We note that $C(\varepsilon)$ 
satisfies the scaling $C(\varepsilon)\approx\varepsilon^{-1}$
and is monotonically increasing if $\varepsilon$ decreases.
It is essential that any $A\in\mathbb S(\varepsilon)$
satisfies \eqref{e:cordes} with $\delta \coloneqq \delta(\varepsilon)$ defined
above
\cite[p.~163]{NeilanSalgadoZhang2017}.

\begin{theorem}[strong solution to regularized PDE]
	\label{thm:existence-uniqueness-regularized}
	Let $\Omega$ be convex and given
	$0 < \varepsilon \leq 1/2$,  $f \in L^1(\Omega)$ with $f \geq 0$, 
	and $g \in H^2(\Omega)$.
	Then (a)--(d) hold. 
	\begin{enumerate}
		\item[(a)] (stability) 
		Let $u_\varepsilon \in H^2(\Omega)$ solve
		\eqref{pr:HJB-regularized}
		and let  $\widetilde u_\varepsilon \in H^2(\Omega)$ solve 
		\eqref{pr:HJB-regularized} with $f$
		replaced by $\widetilde{f} \in L^1(\Omega)$ with $\widetilde{f} \geq 0$
		and $g$ replaced by $\widetilde{g} \in H^2(\Omega)$.
		Then
		\begin{align*}
			\|\Delta (u_\varepsilon - \widetilde u_\varepsilon)\|_{L^2(\Omega)}
			\leq 
			C(\varepsilon)(\|f - \widetilde{f}\|_{L^1(\Omega)}^{1/2}
			+ \|g - \widetilde{g}\|_{H^2(\Omega)}).
		\end{align*}
		In particular, $\|u_\varepsilon - \widetilde u_\varepsilon\|_{H^2(\Omega)} \lesssim (\|f - \widetilde{f}\|_{L^1(\Omega)}^{1/2}
			+ \|g - \widetilde{g}\|_{H^2(\Omega)})/\varepsilon$.
		\item[(b)] (existence) There exists a unique strong solution $u_\varepsilon \in H^2(\Omega)$ to \eqref{pr:HJB-regularized}.
		\item[(c)] (viscosity solution) If $f \in C(\Omega)$, then the unique strong solution $u_\varepsilon$ to \eqref{pr:HJB-regularized} is the unique viscosity solution to \eqref{pr:HJB-regularized}.
		\item[(d)] (classical solution) If $f \in C^{0,\alpha}(\Omega)$ with $0 < \alpha < 1$, then $u_\varepsilon \in C(\overline{\Omega}) \cap C^{2,\kappa}_{\text{loc}}(\Omega)$ is the unique classical solution to \eqref{pr:HJB-regularized}. 
    	The constant $0 < \kappa < 1$ solely depends on $\alpha$ and $\varepsilon$.
	\end{enumerate}
\end{theorem}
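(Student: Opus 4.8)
The proof rests on the Cordes-type renormalisation of \cite{SmearsSueli2014} combined with the Miranda--Talenti estimate $\|\D^2 v\|_{L^2(\Omega)} \le \|\Delta v\|_{L^2(\Omega)}$ for $v \in H^2(\Omega) \cap V$ on convex $\Omega$. The elementary facts I would set up first are that every $A \in \mathbb{S}(\varepsilon)$ has $\tr A = 1$, hence $\gamma(A) = 1/|A|^2 \in (1,2]$ and $\gamma(A)\sqrt{\det A} \le 1$, and that the Cordes property of $A$ with $\delta = \delta(\varepsilon)$ gives $|I_{2\times 2} - \gamma(A)A|^2 = 2 - (\tr A)^2/|A|^2 \le 1 - \delta(\varepsilon) < 1$. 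Since $\gamma$ is bounded above and below by positive constants on $\mathbb{S}(\varepsilon)$, a function $v \in H^2(\Omega)$ solves $F_\varepsilon(f;\cdot,\D^2 v) = 0$ a.e.\ if and only if it solves $F_{\gamma,\varepsilon}(f;\cdot,\D^2 v) = 0$ a.e., and the latter is equivalent to the fixed-point identity $\Delta v = \Phi_f[v]$ a.e.\ with $\Phi_f[v](x) \coloneqq \sup_{A \in \mathbb{S}(\varepsilon)}\big((I_{2\times 2} - \gamma(A)A):\D^2 v(x) + 2\gamma(A)\sqrt{f(x)\det A}\big)$; the bounds $|I_{2\times 2} - \gamma(A)A| \le 1$ and $\gamma(A)\sqrt{\det A} \le 1$ show that $\Phi_f$ maps $H^2(\Omega)$ into $L^2(\Omega)$ because $\sqrt f \in L^2(\Omega)$.

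\emph{Parts (a) and (b).} For the stability estimate let $u_\varepsilon,\widetilde u_\varepsilon$ solve the respective problems and set $w \coloneqq u_\varepsilon - \widetilde u_\varepsilon$. Applying $|\sup X - \sup Y| \le \sup|X-Y|$ to $\Delta w = \Phi_f[u_\varepsilon] - \Phi_{\widetilde f}[\widetilde u_\varepsilon]$ together with $|I_{2\times 2} - \gamma(A)A| \le (1-\delta(\varepsilon))^{1/2}$, $\gamma(A)\sqrt{\det A} \le 1$, and $|\sqrt f - \sqrt{\widetilde f}| \le |f-\widetilde f|^{1/2}$ yields the pointwise a.e.\ bound $|\Delta w| \le (1-\delta(\varepsilon))^{1/2}|\D^2 w| + 2|f - \widetilde f|^{1/2}$. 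Because $w - (g-\widetilde g) \in H^2(\Omega) \cap V$, the Miranda--Talenti estimate gives $\|\D^2 w\|_{L^2(\Omega)} \le \|\Delta w\|_{L^2(\Omega)} + (1+\sqrt 2)\|g-\widetilde g\|_{H^2(\Omega)}$; inserting the $L^2$ norm of the pointwise bound and absorbing $(1-\delta(\varepsilon))^{1/2}\|\D^2 w\|_{L^2(\Omega)}$ to the left produces the asserted estimate with $C(\varepsilon) = (1+\sqrt 2)/(1 - \sqrt{1-\delta(\varepsilon)})$, and the $H^2$ bound follows with the Poincaré inequality for $w-(g-\widetilde g)\in V$. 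For existence in (b), I would apply the Banach fixed-point theorem to the map sending $v \in g + (H^2(\Omega)\cap V)$ to the solution $z$ of $\Delta z = \Phi_f[v]$ in $\Omega$, $z = g$ on $\partial\Omega$: this map is well defined by the $H^2$-regularity of the Poisson problem on convex domains, and by Miranda--Talenti and $|\sup X - \sup Y| \le \sup|X-Y|$ it is a contraction with factor $(1-\delta(\varepsilon))^{1/2} < 1$ on $g + (H^2(\Omega)\cap V)$ endowed with the complete metric induced by $\|\D^2(\cdot)\|_{L^2(\Omega)}$. Its unique fixed point is the desired strong solution; uniqueness among all $H^2(\Omega)$ solutions follows from (a) with $f=\widetilde f$ and $g=\widetilde g$, noting that two $H^2(\Omega)$ solutions with the same Dirichlet data differ by an element of $V$.

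\emph{Parts (c) and (d).} If $f \in C^{0,\alpha}(\Omega)$, then, by the computation in the proof of \Cref{lem:comparison-principle}, $x \mapsto F_\varepsilon(f;x,M)$ is $(\alpha/2)$-Hölder continuous uniformly in $M$, while \Cref{lem:properties-F} shows $F_\varepsilon(f;x,\cdot)$ is convex and uniformly elliptic; the interior Schauder theory for convex uniformly elliptic equations \cite{CaffarelliCabre1995} yields $u_\varepsilon \in C^{2,\kappa}_{\mathrm{loc}}(\Omega)$ with $\kappa$ depending only on $\alpha$ and $\varepsilon$, and the Sobolev embedding $H^2(\Omega)\subset C(\overline{\Omega})$ gives $u_\varepsilon \in C(\overline{\Omega})$. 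Being twice continuously differentiable in $\Omega$ and satisfying \eqref{pr:HJB-regularized} a.e., $u_\varepsilon$ is a classical solution, and uniqueness among classical solutions---which are viscosity solutions by the remark following \Cref{def:viscosity}---follows from the comparison principle \Cref{lem:comparison-principle}. For (c) with merely $f \in C(\Omega)$, the strong solution $u_\varepsilon \in H^2(\Omega)$ satisfies the uniformly elliptic equation a.e.\ with continuous ingredients and is therefore a viscosity solution in the sense of \Cref{def:viscosity}; conversely, Caffarelli's interior $W^{2,p}_{\mathrm{loc}}$ estimates \cite{CaffarelliCabre1995} show that any viscosity solution lies in $W^{2,p}_{\mathrm{loc}}(\Omega)$ for all $p<\infty$ and hence solves the equation a.e., so uniqueness of viscosity solutions reduces to that of strong solutions from (b); alternatively one can approximate $f$ in $L^1(\Omega)$ and locally uniformly by nonnegative Hölder-continuous functions and combine (d), (a), and the stability of viscosity solutions under locally uniform convergence.

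\emph{Main obstacle.} The conceptual content is already present in \cite{SmearsSueli2014}; the delicate points are the bookkeeping of ellipticity constants so that the contraction factor is exactly $(1-\delta(\varepsilon))^{1/2}$ and the stability constant is $C(\varepsilon) \approx \varepsilon^{-1}$, and---more substantively---the reconciliation of strong and viscosity solutions in (c) when $f$ is merely continuous, which forces an appeal to the regularity theory of convex uniformly elliptic equations before uniqueness can be invoked.
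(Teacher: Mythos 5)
Your parts (a) and (b) are correct and essentially reproduce the paper's argument: the pointwise bound $|\Delta w|\le\sqrt{1-\delta(\varepsilon)}\,|\D^2 w|+2|f-\widetilde f|^{1/2}$ extracted from the renormalized operator is exactly the content of \eqref{ineq:F-eps-upper-bound}--\eqref{ineq:pw-estimate}, and the absorption via the Miranda--Talenti inequality applied to $w-(g-\widetilde g)\in V\cap H^2(\Omega)$ is the paper's computation. For existence you run a Banach fixed-point iteration for $v\mapsto\Delta^{-1}\Phi_f[v]$ where the paper applies Browder--Minty to $\langle Av,w\rangle=-\int_\Omega F_{\gamma,\varepsilon}(f;\cdot,\D^2(v+g))\Delta w\d{x}$; the two are interchangeable because your contraction factor $\sqrt{1-\delta(\varepsilon)}$ and the paper's monotonicity constant $1-\sqrt{1-\delta(\varepsilon)}$ encode the same estimate, and both hinge on $\Delta(V\cap H^2(\Omega))=L^2(\Omega)$ for convex $\Omega$.

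The genuine gap is in (c). The claim that the strong solution ``satisfies the uniformly elliptic equation a.e.\ with continuous ingredients and is therefore a viscosity solution'' is precisely the nontrivial step, not a consequence of the definitions: in two dimensions $H^2(\Omega)=W^{2,n}(\Omega)$ is the borderline integrability, $u_\varepsilon$ need not be $C^1$ or pointwise twice differentiable, and the known theorems identifying a.e.\ solutions with viscosity solutions require integrability strictly above the dimension or deliver only $L^p$-viscosity solutions. The paper closes exactly this gap locally: on a ball $B\Subset\Omega$ it approximates $u_\varepsilon$ in $H^2(B)$ and $f$ uniformly by smooth data, invokes \cite{Safonov1988} to obtain genuinely classical solutions $v_j\in C^2(\overline B)$ of the perturbed Dirichlet problems, uses the stability (a) on $B$ to get $v_j\to u_\varepsilon$ uniformly, and concludes with the stability of viscosity solutions under $C^0$ perturbations \cite{Katzourakis2015}. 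Your ``alternative'' route for (c) (approximate $f$ by H\"older functions and combine (d), (a), and viscosity stability) is sound in outline but circular as written: your proof of (d) derives $C^{2,\kappa}_{\mathrm{loc}}$ regularity from the Caffarelli--Cabr\'e theory \cite{CaffarelliCabre1995}, which is a theory for viscosity solutions and therefore presupposes (c). To repair the argument, either prove (c) first by the local approximation/Safonov argument, or base (d) directly on the existence theorem of \cite{Safonov1988} together with an identification of that classical solution with $u_\varepsilon$, as the paper does.
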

\begin{proof}[Proof of \Cref{thm:existence-uniqueness-regularized}(a)]
	The proofs of (a)--(b) can be found in \cite{SmearsSueli2014} 
	for homogeneous boundary data $g = 0$. 
	The arguments therein can be extended to $g \neq 0$ and are outlined
	below for the convenience of the reader. 
	The proof utilizes the following two inequalities.
	Abbreviate $e \coloneqq u_\varepsilon - \widetilde u_\varepsilon$.
	The first one reads
	\begin{align}
		|F_{\gamma,\epsilon}(f;x,M) - F_{\gamma,\epsilon}(\widetilde{f};y,N)| \leq 2|M - N| + 2|f(x) - \widetilde{f}(y)|^{1/2}
		\label{ineq:F-eps-upper-bound}
	\end{align}
	for all $x,y \in \Omega$ and $M,N \in \M$.
	This follows from elementary properties of suprema as in
	the proof of Lemma~\ref{lem:comparison-principle}
	together with the triangle inequality
	and the bounds $\gamma \leq 2$,
	$|A| \leq 1$,
	and $\det A \leq 1/4$ for all $A \in \mathbb{S}(\varepsilon)$.
	The second one
	\begin{align}
		&|F_{\gamma,\varepsilon}(\widetilde{f};x,\D^2 \widetilde{u}_\varepsilon(x)) - F_{\gamma,\varepsilon}(\widetilde{f};x,\D^2 u_\varepsilon(x)) - \Delta e(x)|\nonumber\\
		&\qquad \leq \sup_{A \in \mathbb{S}(\varepsilon)}|\gamma(A)A - I||\D^2 e(x)| \leq \sqrt{1 - \delta(\varepsilon)}|\D^2 e(x)|
		\label{ineq:pw-estimate}
	\end{align}
	for a.e.~$x \in \Omega$ is established in 
	\cite[Lemma~1]{SmearsSueli2014}
	and follows from similar arguments as \eqref{ineq:F-eps-upper-bound}
	because the bound \eqref{e:cordes}
	provides $|\gamma(A)A - I| \leq \sqrt{1 - \delta(\varepsilon)}$,
	cf.\ \cite{SmearsSueli2013,SmearsSueli2014}.
	The weights $\gamma(A) > 0$ are positive for any 
	$A \in \mathbb{S}(\varepsilon)$ and so, 
	$F_\varepsilon(f; x, \D^2 u_\varepsilon(x)) 
	= 0 = F_\varepsilon(\widetilde{f}; x, \D^2 \widetilde u_\varepsilon(x))
	$ 
	implies 
	$F_{\gamma,\varepsilon}(f; x, \D^2 u_\varepsilon(x)) 
	= 0 
	= F_{\gamma,\varepsilon}
	(\widetilde{f}; x, \D^2 \widetilde u_\varepsilon(x))
	$
	a.e.~in $\Omega$. 
	This, \eqref{ineq:F-eps-upper-bound}, 
	and the Cauchy inequality lead to
	\begin{equation}\label{ineq:LHS-upper-bound}
		\begin{aligned}
			\text{LHS} 
			\coloneqq& 
			\int_\Omega
			(F_{\gamma,\varepsilon}(\widetilde{f};
			x,\D^2 \widetilde u_\varepsilon(x)) 
			- F_{\gamma,\varepsilon}(\widetilde{f};x,\D^2 u_\varepsilon(x)))
			\Delta e(x) \d{x}\\
			=& \int_\Omega (F_{\gamma,\varepsilon}(f;x,\D^2 u_\varepsilon(x))
			- F_{\gamma,\varepsilon}(\widetilde{f};x,\D^2 u_\varepsilon(x)))
			\Delta e(x) \d{x}\\
			\leq&
			~2\|f - \widetilde{f}\|_{L^1(\Omega)}^{1/2}\|\Delta e\|_{L^2(\Omega)}.
		\end{aligned}
	\end{equation}
	On the other hand, \eqref{ineq:pw-estimate} and the Cauchy inequality confirm
	\begin{align}
		\|\Delta e\|_{L^2(\Omega)}^2 - \sqrt{1 - \delta(\varepsilon)}\|\D^2 e\|_{L^2(\Omega)} \|\Delta e\|_{L^2(\Omega)} \leq \mathrm{LHS}.
		\label{ineq:LHS-lower-bound}
	\end{align}
	The Miranda–Talenti estimate from \cite[Chapter 3]{Grisvard1985} 
        holds for convex domains $\Omega$ with $C^{2}$ boundary, but is extended to general 
        bounded convex domains in \cite[Theorem 2]{SmearsSueli2013}. This 
	allows for control over the 
	$H^2$ norm by the $L^2$ norm of the Laplacian if the function
	vanishes on the boundary. This, multiple applications of the 
	triangle inequality, and 
	$\|\Delta e\|_{L^2(\Omega)} \leq \sqrt{2}\|e\|_{H^2(\Omega)}$ in 
	two-dimensional domains provide
	\begin{align*}
		&\|\D^2 e\|_{L^2(\Omega)} \leq \|\D^2 (e + \widetilde{g} - g)\|_{L^2(\Omega)} + \|\widetilde{g} - g\|_{H^2(\Omega)}\\
		&\quad\leq \|\Delta (e + \widetilde{g} - g)\|_{L^2(\Omega)} + \|\widetilde{g} - g\|_{H^2(\Omega)} \leq \|\Delta e\|_{L^2(\Omega)} + (1+\sqrt{2})\|\widetilde{g} - g\|_{H^2(\Omega)}.
	\end{align*}
	This, \eqref{ineq:LHS-upper-bound}--\eqref{ineq:LHS-lower-bound},
        the Miranda–Talenti estimate
	conclude the proof of (a).\\[1em]
	\emph{Proof of \Cref{thm:existence-uniqueness-regularized}(b).}
	Let $H \coloneqq V \cap H^2(\Omega)$ with $V = H^1_0(\Omega)$,
	endowed with the norm 
	$\|\cdot\|_{H^2(\Omega)} \approx \|\Delta \cdot\|_{L^2(\Omega)}$.
	We define the semilinear form
	$A : H \to H^*$ by
	\begin{align*}
		\langle A v, w\rangle \coloneqq 
		-\int_\Omega F_{\gamma,\varepsilon}
		(f;x,\D^2(v + g)(x))\Delta w(x) \d{x}
		\quad\text{for all } v,w \in H.
	\end{align*}
	Notice that \eqref{ineq:LHS-lower-bound} holds for
	the replacements
	$u_\varepsilon \coloneqq v + g$ and 
	$\widetilde u_\varepsilon \coloneqq w + g$
	in the definition of LHS.
	This and the Miranda--Talenti estimate 
	$\|\D^2(v - w)\|_{L^2(\Omega)} \leq \|\Delta(v - w)\|_{L^2(\Omega)}$
	\cite[Lemma 2]{SmearsSueli2013} prove the strong monotonicity of 
	$A$ in the sense that 
	$(1 - \sqrt{1 - \delta(\varepsilon)})\|\Delta(v - w)\|_{L^2(\Omega)}^2 
	\leq  \langle Av, v - w\rangle - \langle Aw, v - w\rangle$. 
	On the other hand, \eqref{ineq:F-eps-upper-bound} implies
	the Lipschitz continuity
	\begin{align*}
		|\langle Av,z\rangle - \langle A w,z\rangle|
		\leq 2\|\D^2(v - w)\|_{L^2(\Omega)}\|\Delta z\|_{L^2(\Omega)} .
	\end{align*}
	The Browder--Minty theorem thus shows the existence and uniqueness 
	of $v_\varepsilon \in V$ such that $A v_\varepsilon = 0$.
	Since $\Delta V = L^2(\Omega)$ due to the elliptic regularity
	on convex domains \cite{Grisvard1985},
	this implies 
	$F_{\varepsilon}(f; x,\D^2 (v_\varepsilon + g)(x)) 
	= F_{\gamma,\varepsilon}(f; x,\D^2 (v_\varepsilon + g)(x)) = 0$
	a.e.~in $\Omega$.
	Hence, $u_\varepsilon \coloneqq v_\varepsilon + g \in H^2(\Omega)$ 
	is a strong solution to \eqref{pr:HJB-regularized}.
	Uniqueness of $u_\varepsilon$ follows from the stability in (a).\\[1em]
	\emph{Proof of \Cref{thm:existence-uniqueness-regularized}(c).}
	The proof utilizes classical PDE theory and the stability of viscosity
	solutions under $C^0$ perturbations \cite{Katzourakis2015}.
	Given $x_0 \in \Omega$ 
	and $r>0$ such that the open ball $B \coloneqq B(x_0,r)$ of radius $r$ is inscribed in $\Omega$, that is $\overline{B} \Subset \Omega$.
	For all $j \in \mathbb{N}$, let 
	$u_\varepsilon^{(j)} \in C^\infty(\overline{B})$ 
	and $f_j \in C^\infty(\overline{B})$ be smooth 
	approximations of $u_\varepsilon$ and $f$ in $\overline{B}$
	with 
	$\lim_{j \to \infty}
	(\|u_\varepsilon - u_\varepsilon^{(j)}\|_{H^2(B)} 
	+ \|f - f_j\|_{L^\infty(B)}) = 0
	$.
	Consider the PDE
	\begin{align}
		F_\varepsilon(f_j;x,\D^2 v_j(x)) = 0 \text{ in } B 
		\quad\text{and}\quad 
		v_j = u_\varepsilon^{(j)} \text{ on } \partial B .
		\label{pr:HJB-regularized-ball}
	\end{align}
	The problem \eqref{pr:HJB-regularized-ball} satisfies the structural assumptions (F0)--(F4) and (B1)--(B2) in \cite{Safonov1988}. Hence, there exists a unique classical solution $v_j \in C^{2}(\overline{B})$ to \eqref{pr:HJB-regularized-ball} \cite[Theorem 1.2]{Safonov1988}. In particular, $v_j$ satisfies \eqref{pr:HJB-regularized-ball} pointwise in $B$.
	The stability in (a) proves
	\begin{align*}
		\|u_\varepsilon - v_j\|_{H^2(B)} \lesssim (\|u_\varepsilon - u_\varepsilon^{(j)}\|_{H^2(B)} + \|f - f_j\|_{L^1(B)}^{1/2})/\varepsilon.
	\end{align*}
	Since the right-hand side vanishes in the limit $j \to \infty$, the Sobolev embedding $H^2(B) \subset C^{0,\vartheta}(\overline{B})$ for all $\vartheta \in (0,1)$ \cite[Theorem 4.12 (II)]{AdamsFournier2003} shows that $v_j$ converges uniformly to $u_\varepsilon$ in $\overline{B}$ as $j \to \infty$. The stability of viscosity solutions under $C^0$ perturbations \cite[Chapter 3, Thm.~2]{Katzourakis2015} concludes that $u_\varepsilon$ is a viscosity solution to $F_\varepsilon(f; x, \D^2 u_\varepsilon(x)) = 0$ in $B$ and so, $u_\varepsilon$ is a viscosity solution to \eqref{pr:HJB-regularized}.\\[1em]
	\emph{Proof of \Cref{thm:existence-uniqueness-regularized}(d).}
	If $f \in C^{0,\alpha}(\Omega)$, then the PDE \eqref{pr:HJB-regularized} satisfies the structural assumptions of 
        \cite[Theorem 1.1]{Safonov1988}. This provides $u_\varepsilon \in C(\overline{\Omega}) \cap C^{2,\kappa}_{\text{loc}}(\Omega)$
       with a constant $\kappa$ that depends on $\alpha$, $\varepsilon$, and $\Omega$.
	Since $F(f;x,\cdot)$ is convex, the perturbation theory of \cite{CaffarelliCabre1995} 
       applies to the PDE \eqref{pr:HJB-regularized} and leads to the same regularity result.
\end{proof}

An implication of the comparison principle in 
\Cref{lem:comparison-principle} is the following monotonicity
with respect to the regularization parameter.

\begin{lemma}[monotonicity]\label{lem:monotonicity}
Let $f \in C^{0,\alpha}(\Omega)$ with $\alpha \in (0,1)$, $f \geq 0$,
and $g \in H^2(\Omega)$.
Given two regularization parameters
$0 < \varepsilon \leq \zeta \leq 1/2$,
let
$u_\varepsilon, u_\zeta \in H^2(\Omega)$ be the unique strong solutions to
\eqref{pr:HJB-regularized} and \eqref{pr:HJB-regularized} (with $\varepsilon$ replaced by $\zeta$).
Then
\begin{align}
	u \leq u_\varepsilon \leq u_\zeta \leq u_{1/2} \text{ in } \overline{\Omega},
	\label{ineq:monotonicity}
\end{align}
where $u$ is the viscosity solution to the Monge--Amp\`ere equation and
$
u_{1/2} \in H^2(\Omega)
$ 
denotes the solution to the Poisson model problem $\Delta u_{1/2} = 2\sqrt{f}$ in $\Omega$ 
with $u_{1/2} = g$ on $\partial \Omega$.
\end{lemma}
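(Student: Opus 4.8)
The plan is to realise each of the five functions in \eqref{ineq:monotonicity} as a viscosity solution of a Hamilton--Jacobi--Bellman equation $F_s(f;\cdot,\D^2(\cdot)) = 0$ for a suitable parameter $s$, to exploit that $F_s$ depends monotonically on the admissible set $\mathbb{S}(s)$, and to conclude with the comparison principle of \Cref{lem:comparison-principle}. First I would record the pointwise ordering of the operators: since $\mathbb{S}(1/2) \subset \mathbb{S}(\zeta) \subset \mathbb{S}(\varepsilon) \subset \mathbb{S}(0)$ and every $F_s$ is the supremum over $\mathbb{S}(s)$ of one and the same integrand $(-A:M + 2\sqrt{f(x)\det A})$, one has
\begin{align*}
	F_{1/2}(f;x,M) \le F_\zeta(f;x,M) \le F_\varepsilon(f;x,M) \le F(f;x,M)
\end{align*}
for all $x \in \Omega$ and $M \in \mathbb{S}$.

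Next I would pin down the two endpoints of the chain. Because $\tr A = 1$ forces both eigenvalues of any $A \in \mathbb{S}(1/2)$ to equal $1/2$, the set $\mathbb{S}(1/2)$ is the singleton $\{I_{2\times 2}/2\}$; hence $F_{1/2}(f;x,M) = -\tfrac12\tr M + \sqrt{f(x)}$, and $F_{1/2}(f;\cdot,\D^2 v) = 0$ is exactly the Poisson equation $\Delta v = 2\sqrt f$. Thus $u_{1/2}$ is the unique strong solution of \eqref{pr:HJB-regularized} for the parameter $1/2$ and, since $\sqrt f \in C(\overline{\Omega})$, by \Cref{thm:existence-uniqueness-regularized}(c) also its unique viscosity solution. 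By \Cref{thm:existence-uniqueness-regularized}(c)--(d), $u_\varepsilon, u_\zeta \in C(\overline{\Omega})$ are the viscosity solutions of $F_\varepsilon(f;\cdot,\D^2 u_\varepsilon) = 0$, resp.\ $F_\zeta(f;\cdot,\D^2 u_\zeta) = 0$, with boundary data $g$; and $u \in C(\overline{\Omega})$, which exists by \Cref{thm:existence-uniqueness-Monge-Ampere}, is by the equivalence of \eqref{pr:Monge-Ampere} and \eqref{pr:Monge-Ampere-HJB} the viscosity solution of $F(f;\cdot,\D^2 u) = 0$ with boundary data $g$.

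The core of the argument is then a short monotonicity observation, which I would state once and apply three times. Suppose $0 \le a \le b \le 1/2$ with $b > 0$ (writing $F_0 \coloneqq F$), and let $w_a, w_b \in C(\overline{\Omega})$ be viscosity solutions of $F_a(f;\cdot,\D^2 w_a) = 0$, resp.\ $F_b(f;\cdot,\D^2 w_b) = 0$, that both equal $g$ on $\partial\Omega$. From $F_b \le F_a$, any $\varphi \in C^2(\Omega)$ with $w_a - \varphi$ attaining a local maximum at $x_0$ satisfies $F_b(x_0,\D^2\varphi(x_0)) \le F_a(x_0,\D^2\varphi(x_0)) \le 0$; hence $w_a$ is a viscosity subsolution of $F_b(f;\cdot,\D^2 v) = 0$. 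Since $w_b$ is a viscosity supersolution of the same equation, $b > 0$, and the boundary values agree, \Cref{lem:comparison-principle} gives $w_a \le w_b$ in $\overline{\Omega}$. Applying this to the pairs $(a,b) \in \{(0,\varepsilon),(\varepsilon,\zeta),(\zeta,1/2)\}$ (the inequality being trivial whenever $a = b$) yields $u \le u_\varepsilon \le u_\zeta \le u_{1/2}$, which is \eqref{ineq:monotonicity}.

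I expect the only delicate point to be the bookkeeping about which operator drives each comparison: one must always compare with respect to the operator $F_b$ of the \emph{larger} index (equivalently, the pointwise smaller operator), because a viscosity subsolution of $F_a(f;\cdot,\D^2 v) = 0$ with $a \le b$ is automatically a viscosity subsolution of the dominated equation $F_b(f;\cdot,\D^2 v) = 0$, but in general not conversely; this choice is also what keeps \Cref{lem:comparison-principle}, which requires a strictly positive regularization parameter, applicable --- notably for the pair $(0,\varepsilon)$, where $F = F_0$ is merely degenerate elliptic. A minor technical check is that all five functions belong to $C(\overline{\Omega})$, as required by \Cref{lem:comparison-principle}; this is contained in \Cref{thm:existence-uniqueness-regularized} together with the Sobolev embedding $H^2(\Omega) \subset C(\overline{\Omega})$ in two dimensions.
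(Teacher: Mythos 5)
Your proposal is correct and follows essentially the same route as the paper: pointwise monotonicity of $F_s$ in the parameter set $\mathbb S(s)$, reinterpretation of each function as a sub- or supersolution of a neighbouring uniformly elliptic equation, identification of $u_{1/2}$ with the Poisson solution, and \Cref{lem:comparison-principle} applied to each adjacent pair (always with respect to the operator with strictly positive parameter). The only cosmetic difference is that for the middle inequalities you cast the smaller-parameter solution as a subsolution of the larger-parameter equation, whereas the paper casts the larger-parameter solution as a supersolution of the smaller-parameter equation; both are valid instances of the same comparison argument.
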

\begin{proof}
Notice from \Cref{thm:existence-uniqueness-regularized}(d) that $u_\varepsilon, u_\zeta, u_{1/2} \in H^2(\Omega) \cap C^2(\Omega)$ are classical solutions, but $u$ may not be in $C^{2,\alpha}_{\mathrm{loc}}(\Omega)$, cf.~\Cref{thm:existence-uniqueness-Monge-Ampere}(b) for sufficient conditions for this.
Since $u_\zeta$ solves \eqref{pr:HJB-regularized} 
with $\varepsilon \coloneqq \zeta$, 
it satisfies $F_{\zeta}(f;x,\D^2 u_\zeta(x)) = 0$ for all $x \in \Omega$.
This and the inclusion
$\mathbb{S}(\zeta) \subset \mathbb{S}(\varepsilon)$
imply 
$$
	F_{\varepsilon}(f;x,\D^2 u_\zeta(x)) 
	\geq F_{\zeta}(f;x,\D^2 u_\zeta(x)) = 0
	\quad\text{in }\Omega.
$$
In particular, $u_\zeta$ is viscosity supersolution to \eqref{pr:HJB-regularized}. Notice that $u_{1/2}$ solves \eqref{pr:HJB-regularized} with $\varepsilon \coloneqq 1/2$. Hence, the comparison principle from \Cref{lem:comparison-principle} proves
\begin{align}
	u_\varepsilon \leq u_\zeta \leq u_{1/2} \text{ in } \overline{\Omega}.
	\label{ineq:proof-monotonicity}
\end{align}
Similarly,
the inclusion $\mathbb{S}(\varepsilon) \subset \mathbb{S}(0)$ 
implies, for any test function $\varphi \in C^2(\Omega)$ such that $u - \varphi$ has a local maximum at $x$, that
$$
	F_\varepsilon(f;x,\D^2 \varphi(x)) \leq F(f;x,\D^2 \varphi(x)) \leq 0
	\quad\text{for any } x \in \Omega.
$$
Hence, $u$ is a viscosity subsolution to \eqref{pr:HJB-regularized}
and the comparison principle from
Lem\-ma~\ref{lem:comparison-principle} provides
$u \leq u_\varepsilon$ in $\overline{\Omega}$. This and \eqref{ineq:proof-monotonicity} conclude the proof.
\end{proof}

\section{Convergence analysis}\label{sec:convergence}
This section establishes the uniform convergence
of $u_\varepsilon$ to $u$ in $\Omega$ as $\varepsilon \to 0$.
The proof departs from the convergence result for sufficiently
smooth right-hand side $f$ below.

\begin{theorem}[convergence]\label{thm:convergence}
	Suppose that Assumption~\ref{assumption:structure} holds.
	Let $f \in C^{0,\alpha}(\Omega)$,
	$0 < \lambda \leq f \leq \Lambda$,
	and $g \in H^2(\Omega) \cap C^{1,\beta}(\overline{\Omega})$
	with positive constants
	$0 < \alpha,\beta < 1$ and $0 < \lambda \leq \Lambda$ be given.
	Let
	$u \in C(\overline{\Omega}) \cap C^{2,\alpha}_{\mathrm{loc}}(\Omega)$
	be the unique classical solution to \eqref{pr:Monge-Ampere} from
	\Cref{thm:existence-uniqueness-Monge-Ampere}(b).
	\begin{enumerate}
		\item[(a)] For any sequence
		$0<(\varepsilon_j)_{j \in \mathbb{N}}\leq 1/2$ with $\lim_{j \to \infty} \varepsilon_j = 0$, the sequence $(u_{\varepsilon_j})_{j \in \mathbb{N}}$ of strong solutions $u_{\varepsilon_j} \in H^2(\Omega)$ to \eqref{pr:HJB-regularized} with $\varepsilon \coloneqq \varepsilon_j$ converges uniformly to $u$ in $\Omega$ as $j \to \infty$.
		\item[(b)] If $g\equiv 0$, $f \in C^{2,\alpha}(\Omega)$, and
		$f > 0$ in $\overline{\Omega}$,
		then, for all $0 < \varepsilon \leq 1/2$, the strong solution $u_\varepsilon \in H^2(\Omega)$
		to \eqref{pr:HJB-regularized}
		satisfies
		\begin{align}
			\|u - u_\varepsilon\|_{L^\infty(\Omega)}
			\lesssim \varepsilon^{1/32}.
			\label{ineq:rate-eps}
		\end{align}
	\end{enumerate}
\end{theorem}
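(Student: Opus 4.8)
The plan is to exploit the monotonicity from Lemma~\ref{lem:monotonicity}, which already gives $u \le u_\varepsilon$ pointwise in $\overline\Omega$ and shows that $\varepsilon \mapsto u_\varepsilon$ is monotone, so that one only needs an upper bound on $u_\varepsilon - u$. For part~(a), the key observation is that $(u_{\varepsilon_j})_j$ is a monotone sequence of continuous functions on $\overline\Omega$, all sharing the boundary values $g$, and all bounded below by $u$ and above by $u_{1/2}$; hence it converges pointwise to some limit $\bar u$. First I would show $\bar u$ inherits convexity (each $u_{\varepsilon_j}$ need not be convex, but the limit can be identified via the HJB structure) and, more to the point, that $\bar u$ is a viscosity solution of the Monge--Amp\`ere equation~\eqref{pr:Monge-Ampere}: passing to the limit in $F_{\varepsilon_j}(f;x,\D^2 u_{\varepsilon_j}) = 0$ using the stability of viscosity solutions under local uniform convergence and the fact that $\sup_{A\in\mathbb S(\varepsilon_j)}(\cdots) \to \sup_{A\in\mathbb S(0)}(\cdots)$ as $\varepsilon_j \to 0$ (the regularized constraint sets exhaust $\mathbb S(0)$). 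By the equivalence theorem and uniqueness in \Cref{thm:existence-uniqueness-Monge-Ampere}(a), $\bar u = u$. Finally, monotone pointwise convergence of continuous functions to a continuous limit on the compact set $\overline\Omega$ upgrades to uniform convergence by Dini's theorem; the boundary values match since all iterates equal $g$ on $\partial\Omega$.

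For the quantitative rate in part~(b), the strategy is to construct, for each $\varepsilon$, an explicit convex strict subsolution of the regularized equation that is close to $u$, and then invoke the comparison principle (Lemma~\ref{lem:comparison-principle}) to sandwich $u_\varepsilon$. Concretely, I would perturb $u$ by adding a small multiple of a suitable quadratic-type or distance-type barrier, $u + c(\varepsilon)\psi$, designed so that (i) it is still admissible as a comparison function and (ii) it satisfies $F_\varepsilon \ge 0$ in the viscosity sense. The size $c(\varepsilon)$ is dictated by the gap between $F$ and $F_\varepsilon$: on $\mathbb S(\varepsilon)$ one loses control only when the optimal $A$ in the Monge--Amp\`ere supremum has an eigenvalue below $\varepsilon$, and — using the Pogorelov bound \eqref{ineq:Pogorelov} which controls $|\D^2 u(x)|$ by $\dist(x,\partial\Omega)^{-8}$ — the optimal $A$ near a point $x$ is comparable to $(\D^2 u(x))^{-1}/\tr((\D^2 u(x))^{-1})$, whose smallest eigenvalue degrades like $\dist(x,\partial\Omega)^{8}$. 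Balancing this interior degeneracy against the boundary layer where one simply uses $u \le u_\varepsilon \le u_{1/2}$ and $\|u_{1/2}-u\|\lesssim$ (boundary-layer width)$^{1/2}$ produces an optimization in the layer width $\rho$: the interior error scales like a power of $\varepsilon/\rho^{8}$ and the boundary error like $\rho^{1/2}$ (up to logarithms and using $g\equiv0$ so both $u$ and $u_{1/2}$ vanish on $\partial\Omega$ with quantified modulus). Optimizing over $\rho$ yields a power of $\varepsilon$; tracking the exponents (the $8$ from Pogorelov, a $1/2$ from the $\sqrt{\cdot}$ in $F$, a further $1/2$ from $\|f-\widetilde f\|_{L^1}^{1/2}$-type bounds, and the $1/2$ from the Poisson boundary layer) is what collapses down to the stated $\varepsilon^{1/32}$.

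The main obstacle I expect is the quantitative interior estimate in~(b): controlling, uniformly in $x$, how far the true optimal matrix $A^\ast(x) \in \mathbb S(0)$ in the Monge--Amp\`ere supremum lies from the truncated set $\mathbb S(\varepsilon)$, and converting this into a pointwise defect $F_\varepsilon(f;x,\D^2 u(x)) \ge -\eta(x)$ with $\eta$ explicitly bounded in terms of $\varepsilon$ and $\dist(x,\partial\Omega)$ via \eqref{ineq:Pogorelov}. This requires an explicit lower bound on the smallest eigenvalue of $A^\ast(x)$, hence on the smallest eigenvalue of $(\D^2 u(x))^{-1}$, which in turn needs both an upper bound on $|\D^2 u|$ (Pogorelov) and a lower bound on $\det\D^2 u = f \ge \lambda > 0$; assembling these into a clean barrier argument that survives the comparison principle, while keeping the constants independent of $\varepsilon$, is the technically delicate part. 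Part~(a) is comparatively soft, relying only on Dini's theorem and stability of viscosity solutions, and I would write it first to isolate the qualitative mechanism before committing to the exponent bookkeeping in~(b).
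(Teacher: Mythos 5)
Your overall architecture (monotonicity, comparison principle, Pogorelov bound, boundary layer) overlaps with the paper's, but both parts as proposed contain genuine gaps.

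For part (a), your route --- monotone pointwise limit $\bar u$, identification of $\bar u$ as a viscosity solution of the limit equation, then Dini --- is different from the paper's and, as written, circular and incomplete. Dini's theorem requires the pointwise limit to be continuous, but the decreasing limit of continuous functions is a priori only upper semicontinuous; its continuity is part of what you must prove, so you cannot invoke Dini before identifying $\bar u$. More seriously, identifying $\bar u$ with $u$ via ``stability of viscosity solutions'' requires a comparison/uniqueness principle for the \emph{degenerate} limit equation $F=0$ that applies to the semicontinuous relaxed limits; \Cref{lem:comparison-principle} is proved only for $\varepsilon>0$ and its proof rests on the uniform ellipticity of $F_\varepsilon$, so you would be importing a nontrivial external result (a Barles--Souganidis-type argument for Monge--Amp\`ere with $f\ge\lambda>0$). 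You would also need to handle non-monotone null sequences $(\varepsilon_j)$, since \Cref{lem:monotonicity} only orders $u_\varepsilon\le u_\zeta$ for $\varepsilon\le\zeta$. The paper avoids all of this by never passing to the limit equation: for each fixed small $\varepsilon_j$ the interior $C^{2,\alpha}$ estimate \eqref{ineq:interior-C2-estimate} combined with \Cref{lem:regularization} shows that $u$ \emph{exactly} satisfies $F_{\varepsilon_j}(f;x,\D^2u(x))=0$ on an interior set $\Omega'$, the comparison principle of \Cref{lem:comparison-principle} (for the uniformly elliptic $F_{\varepsilon_j}$) then pushes the error to $\partial\Omega'$, and on the strip $\Omega\setminus\Omega'$ the sandwich $0\le u_{\varepsilon_j}-u\le u_{1/2}-u$ together with uniform continuity of $u_{1/2}-u$ and the common boundary values gives the bound $\nu$.

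For part (b), your barrier construction $u+c(\varepsilon)\psi$ with a pointwise defect estimate $F_\varepsilon(f;x,\D^2u(x))\ge-\eta(x)$ misses the mechanism that makes the proof work: \Cref{lem:regularization} is an exact if-and-only-if statement, so wherever $|\D^2u(x)|^2\le C_3(\varepsilon)f(x)/\varepsilon$ the supremum over $\mathbb S(0)$ is already attained in $\mathbb S(\varepsilon)$ and $u$ solves the regularized equation \emph{exactly} --- there is no interior defect to absorb with a barrier. The Pogorelov estimate \eqref{ineq:Pogorelov} guarantees this on $\Omega'=\{x:\dist(x,\partial\Omega)^{16}\gtrsim\varepsilon\}$, so the comparison principle reduces everything to the layer of width $\rho\sim\varepsilon^{1/16}$. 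There the paper uses $u\le u_\varepsilon\le 0$ (since $0$ is a supersolution for $g\equiv0$) and the \emph{Alexandrov maximum principle}, which in two dimensions gives $|u(x)|^2\lesssim\dist(x,\partial\Omega)\,\|f\|_{L^1(\Omega)}$, hence $\|u-u_\varepsilon\|_{L^\infty(\Omega\setminus\Omega')}\lesssim\rho^{1/2}=\varepsilon^{1/32}$. Your exponent bookkeeping attributes the final $1/2$ to a Poisson boundary layer and to $\|f-\widetilde f\|_{L^1}^{1/2}$-type stability, neither of which enters; without the Alexandrov estimate your optimization in $\rho$ does not produce $1/32$, and the ``technically delicate'' quantitative estimate of the distance of the optimal $A^\ast(x)$ from $\mathbb S(\varepsilon)$ that you anticipate is unnecessary.
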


The proof of \Cref{thm:convergence} relies on the following
elementary lemma.
\begin{lemma}[effect of regularization]\label{lem:regularization}
	Given a symmetric matrix $M \in \mathbb{S}$, a positive number $\zeta > 0$, and a positive parameter $0 < \varepsilon \leq 1/2$. Assume that
	\begin{align}
		\max_{A \in \mathbb{S}(0)} (-A:M + \zeta \sqrt{\det A}) = 0.
		\label{eq:HJB-max=0}
	\end{align}
	Then the maximum of \eqref{eq:HJB-max=0} is attained in the subset $\mathbb{S}(\varepsilon) \subset \mathbb{S}(0)$ if and only if $|M|^2 \leq ((1-\varepsilon)^2 + \varepsilon^2) \zeta^2/(4 \varepsilon(1-\varepsilon))$.
\end{lemma}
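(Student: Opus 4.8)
The plan is to analyze the optimization problem \eqref{eq:HJB-max=0} explicitly by diagonalizing $M$ and reducing to a two-variable calculus problem. First I would observe that, since the objective $-A:M + \zeta\sqrt{\det A}$ and the constraint sets $\mathbb S(0)$, $\mathbb S(\varepsilon)$ are all invariant under the simultaneous orthogonal conjugation $A\mapsto Q^\top A Q$, $M\mapsto Q^\top M Q$, I may assume $M=\mathrm{diag}(\mu_1,\mu_2)$ is diagonal. Writing a competitor $A=\mathrm{diag}(a_1,a_2)$ (one checks the maximizer can be taken diagonal: off-diagonal entries only decrease $\det A$ for fixed trace while leaving $A:M$ unaffected when $M$ is diagonal — actually off-diagonal entries of $A$ would contribute nothing to $A:M$ but strictly decrease $\det A$, so they are suboptimal), the problem becomes
\begin{align*}
	\max_{a_1,a_2\geq 0,\ a_1+a_2=1}\bigl(-\mu_1 a_1 - \mu_2 a_2 + \zeta\sqrt{a_1 a_2}\bigr) = 0.
\end{align*}
Parametrizing $a_1=t$, $a_2=1-t$ with $t\in[0,1]$, this is a one-dimensional concave maximization of $\phi(t):=-\mu_1 t - \mu_2(1-t) + \zeta\sqrt{t(1-t)}$, and the constraint $A\in\mathbb S(\varepsilon)$ corresponds exactly to $t\in[\varepsilon,1-\varepsilon]$.

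Next I would locate the interior maximizer $t^\ast\in(0,1)$ of $\phi$ by the stationarity condition $\phi'(t^\ast)=0$, i.e. $\mu_2-\mu_1 + \zeta\,(1-2t^\ast)/(2\sqrt{t^\ast(1-t^\ast)})=0$. Because $\phi$ is strictly concave on $(0,1)$ and $\mathbb S(\varepsilon)$ is a symmetric sub-interval $[\varepsilon,1-\varepsilon]$ of $[0,1]$, the global maximum over $\mathbb S(0)$ (which by hypothesis equals $0$ and is attained at some $t^\ast$, since $\phi\to-\infty$-ish or at least is bounded and the sup is attained by compactness after noting $\sqrt{\det A}$ is continuous on the compact set $\mathbb S(0)$) lies in $\mathbb S(\varepsilon)$ if and only if $t^\ast\in[\varepsilon,1-\varepsilon]$, equivalently $\min\{t^\ast,1-t^\ast\}\geq\varepsilon$, equivalently $t^\ast(1-t^\ast)\geq\varepsilon(1-\varepsilon)$ (using that $s\mapsto s(1-s)$ is increasing on $[0,1/2]$ and the pair $\{t^\ast,1-t^\ast\}$ straddles $1/2$). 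So the whole statement reduces to re-expressing the condition $t^\ast(1-t^\ast)\geq\varepsilon(1-\varepsilon)$ in terms of $|M|$.

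To do that, I would use the constraint $\phi(t^\ast)=0$, which reads $\mu_1 t^\ast + \mu_2(1-t^\ast) = \zeta\sqrt{t^\ast(1-t^\ast)}$, together with the stationarity relation, to eliminate $\mu_1,\mu_2$ in favour of $t^\ast$ and $\zeta$. Solving the $2\times 2$ linear system (in the unknowns $\mu_1,\mu_2$) given by these two equations, one obtains closed-form expressions $\mu_1 = \zeta\sqrt{(1-t^\ast)/t^\ast}\,/\,?$ — more precisely, adding/subtracting appropriately, $\mu_1 = \zeta(1-t^\ast)^{3/2}/\bigl(?\bigr)$; carrying this out gives $\mu_i$ proportional to $\zeta$ with a rational-in-$t^\ast$ factor, whence $|M|^2 = \mu_1^2+\mu_2^2$ becomes an explicit function of $t^\ast$ and $\zeta^2$. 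Substituting the boundary case $t^\ast=\varepsilon$ (or $1-\varepsilon$) should yield exactly the claimed threshold $|M|^2 = ((1-\varepsilon)^2+\varepsilon^2)\zeta^2/(4\varepsilon(1-\varepsilon))$, and monotonicity of $|M|^2$ as a function of $t^\ast(1-t^\ast)$ then upgrades this to the stated if-and-only-if. The main obstacle is the bookkeeping in this elimination step: one must verify that $|M|^2$ is a \emph{monotone} function of $t^\ast(1-t^\ast)$ (so that the scalar inequality on $|M|$ is genuinely equivalent to $t^\ast(1-t^\ast)\geq\varepsilon(1-\varepsilon)$, with no spurious branches), and one must handle the degenerate/boundary configurations (e.g. $\mu_1=\mu_2$, giving $t^\ast=1/2$, or cases where the unconstrained optimum would want $t^\ast\notin(0,1)$, which cannot actually occur here since $\phi'(0^+)=+\infty$ and $\phi'(1^-)=-\infty$ force an interior maximizer). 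Once monotonicity is in hand, the equivalence is immediate from the geometry of maximizing a strictly concave function over nested symmetric intervals.
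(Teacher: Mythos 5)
Your proposal is correct and follows essentially the same route as the paper's proof: diagonalize $M$, reduce to the one-dimensional strictly concave maximization of $\varphi(t)=-t\varrho_1-(1-t)\varrho_2+\zeta\sqrt{t(1-t)}$ over $[0,1]$, characterize the interior maximizer by $\varphi(\overline t)=\varphi'(\overline t)=0$, and express $|M|^2$ as a function of $\overline t$ whose monotonicity in $\overline t(1-\overline t)$ yields the equivalence. The elimination you leave open works out exactly as you predict, giving $\varrho_1=(1-\overline t)\zeta/(2\sqrt{\overline t(1-\overline t)})$, $\varrho_2=\overline t\zeta/(2\sqrt{\overline t(1-\overline t)})$ and $|M|^2=\zeta^2\bigl(1/(4\overline t(1-\overline t))-1/2\bigr)$, which is strictly decreasing in $\overline t(1-\overline t)$, so the monotonicity you flag as the remaining obstacle is a routine check (the paper does it via convexity and symmetry of $\psi$).
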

\begin{proof}
Let $M \in \mathbb{S}$ with the eigenvalues $\varrho_1$ and $\varrho_2$ be given. Define $\Phi(A) \coloneqq -A:M + \zeta\sqrt{\det A}$ for any $A \in \mathbb{S}(0)$ and $\psi(s) \coloneqq ((1-s)^2 + s^2) \zeta^2/(4s(1-s))$ for all $0 < s < 1$.
We argue as in \eqref{ineq:proof-ellipticity-lower-bound} and assume, without loss of generality, that $M$ is a diagonal matrix.
It is well-known \cite[p.~51]{Krylov1987} that the maximum in \eqref{eq:HJB-max=0} is attained in the subset of diagonal matrices $A \in \mathbb{S}(0)$. Thus, $\eqref{eq:HJB-max=0}$ holds if and only if $\max_{t \in [0,1]} \varphi(t) = 0$
for the scalar-valued function $\varphi : [0,1] \to \R$ with $\varphi(t) \coloneqq - t \varrho_1 - (1 - t) \varrho_2 + \zeta\sqrt{t(1 - t)}$.
The derivatives of $\varphi$ read as
$$
\varphi'(t) = \varrho_2 - \varrho_1 
        + \frac{(1-2t)\zeta}{2\sqrt{t(1-t)}}
\quad\text{and}\quad
\varphi''(t) = - \frac{\zeta}{4(t(1-t))^{3/2}}.
$$
The property $\varphi''<0$ in $(0,1)$ shows that
$\varphi'$ is strictly monotonically decreasing with
$\lim_{t \to 0} \varphi'(t) = \infty$ 
and $\lim_{t \to 1} \varphi'(t) = - \infty$.
Thus, the unique maximizer $\overline t$ of $\varphi$
in $[0,1]$ is an interior point
$\overline{t} \in (0,1)$ with
$\varphi(\overline{t}) = 0$ and $\varphi'(\overline{t}) = 0$.
This verifies
\begin{align*}
 \varrho_1 
 = \frac{(1 - \overline{t}) \zeta}{2\sqrt{\overline{t}(1-\overline{t})}}
 \quad\text{and}\quad 
\varrho_2 
= \frac{\overline{t} \zeta}{2\sqrt{\overline{t}(1-\overline{t})}}.
\end{align*}
We prove now that 
$\overline{t} \in [\varepsilon, 1-\varepsilon]$ 
if and only if
$
\varrho_1^2 + \varrho_2^2 
 \leq \psi(\varepsilon)
$.
The scalar-valued function $\psi$ with the derivatives
\begin{align*}
	\psi'(s) = \frac{(2s - 1) \zeta^2}{4s^2(1-s)^2}\quad\text{and}\quad
	\psi''(s) = \frac{(1 - 3 s + 3 s^2) \zeta^2}{2s^3(1-s)^3}
\end{align*}
is strictly convex because $\psi''(s)>0$ for all $s \in (0,1)$.
Since $\psi'(s)$
vanishes if and only if $s = 1/2$, the function $\psi$ is monotonically decreasing in $(0,1/2]$,
monotonically increasing in $[1/2,1)$,
and symmetric in the sense that 
$\psi(s) = \psi(1-s)$.
Hence, $\overline{t} \in [\varepsilon, 1-\varepsilon]$ 
if and only if $\psi(\overline{t}) \leq \psi(\varepsilon)$.
In particular, $|M|^2 \leq \psi(\varepsilon)$ implies $0 = \varphi(\overline{t}) = \max_{t \in [\varepsilon,1-\varepsilon]} \varphi(t) \leq \max_{A \in \mathbb{S}(\varepsilon)} \Phi(A)$ and so, \eqref{eq:HJB-max=0} concludes $\max_{A \in \mathbb{S}(\varepsilon)} \Phi(A) = 0$. On the other hand, 
if there exists $A \in \mathbb{S}(\varepsilon)$ such that $\Phi(A) = 0$, 
then \cite[Lemma 3.2.1]{Krylov1987} proves that $\widetilde{A} \coloneqq \big(\begin{smallmatrix} a_{11} & 0\\ 0 & a_{22}\end{smallmatrix}\big) \in \mathbb{S}(\varepsilon)$ with the diagonal elements $a_{11}, a_{22}$ of $A$ satisfies $0 = \Phi(A) \leq \Phi(\widetilde{A}) \leq \max_{t \in [\varepsilon,1-\varepsilon]} \varphi(t)$. Thus, \eqref{eq:HJB-max=0} shows $\max_{t \in [\varepsilon,1-\varepsilon]} \varphi(t) = 0$ and so, it follows $|M|^2 \leq \psi(\varepsilon)$.
\end{proof}

\begin{remark}[no need for regularization]
	Suppose that $f$ and $g$ satisfy the assumptions of \Cref{thm:convergence}.
	If $\|\D^2 u\|_{L^\infty(\Omega)}$ is bounded,
	then \Cref{lem:regularization} shows that
	$u$ is strong solution to the regularized problem
	\eqref{pr:HJB-regularized} for sufficiently small $\varepsilon$.
	Hence, asymptotically the regularization has no effect.
	A similar result has been proven in \cite[Theorem 5.2]{BrennerKawecki2021}.
\end{remark}

A key observation from \Cref{lem:regularization} (applied to $\zeta:=2\sqrt{f(x)}$) is that 
$F_{\varepsilon}(f;x,\allowbreak\D^2 u(x)) = 0$ if $|\D^2 u(x)|^2$ is bounded 
by $C_3(\varepsilon)f(x)/\varepsilon$ for
\begin{align}
	C_3(\varepsilon) 
	\coloneqq ((1-\varepsilon)^2 + \varepsilon^2)/(1 - \varepsilon) 
	\geq 2(\sqrt{2}-1) \geq 4/5.
	\label{def:C-3}
\end{align}
This holds in any compact subset $\Omega' \Subset \Omega$ of $\Omega$
with sufficiently small $\varepsilon$ due to the interior
$C^{2,\alpha}$ estimate 
from \Cref{thm:existence-uniqueness-Monge-Ampere}(b). 
This is the point of departure of the proof 
of \Cref{thm:convergence}.

\begin{proof}[Proof of \Cref{thm:convergence}(a).]		
	Recall $u_{1/2}$ from \Cref{lem:monotonicity}.
	The function $u_{1/2} - u$ is continuous on the compact set $\overline{\Omega}$. Hence, $u_{1/2} - u$ is uniformly continuous in $\overline{\Omega}$. Given $\nu > 0$, there exists $s > 0$ such that
	\begin{align}
		|(u_{1/2} - u)(x) - (u_{1/2} - u)(y)| \leq \nu \text{ for all } x,y \in \overline{\Omega} \text{ with } |x - y| \leq s.
		\label{ineq:proof-convergence-uniform-continuity}
	\end{align}
	Define the compact subset $\Omega' \coloneqq \{x \in \Omega: \dist(x,\partial \Omega) \geq s\} \Subset \Omega$ of $\Omega$.
	Given $x \in \overline{\Omega \setminus \Omega'}$, there exists $y \in \partial \Omega$ such that $|x - y| \leq s$. This, \eqref{ineq:monotonicity}, and \eqref{ineq:proof-convergence-uniform-continuity} show
	\begin{align*}
		0 \leq u_{1/2}(x) - u(x)\leq \nu \text{ in } \overline{\Omega \setminus \Omega'}.
	\end{align*}
	The combination of this with \eqref{ineq:monotonicity} implies, for all $x \in \overline{\Omega \setminus \Omega'}$ and $j \in \mathbb{N}$,
	\begin{align}
		0 \leq  u_{\varepsilon_j}(x) - u(x)\leq u_{1/2}(x) - u(x)\leq \nu.
		\label{ineq:proof-uniform-convergence-boundary-layer}
	\end{align}
	Recall $0 < \lambda \leq f$. We choose $N > 0$ such that 
	$\varepsilon_j \leq 4\lambda/(5 C_1^2)$ 
	with $C_1 \coloneqq C_1(\lambda,\Lambda,r,R,f,g,\alpha,s)$ 
	from \Cref{thm:existence-uniqueness-Monge-Ampere}(b) 
	for all $j \geq N$. With this choice,
	\Cref{thm:existence-uniqueness-Monge-Ampere}(b) 
	and the definition of $C_3(\varepsilon_j)$ from \eqref{def:C-3} provide, for all $x \in \Omega'$,
	\begin{align*}
		|\D u(x)|^2 
		\leq C_1^2 
		\leq 4\lambda/(5\varepsilon_j)
		\leq C_3(\varepsilon_j) \lambda/\varepsilon_j.
	\end{align*}
	This and \Cref{lem:regularization} prove 
	$F_{\varepsilon_j}(f;x,\D^2 u(x)) = 0$ pointwise in $\Omega'$,
	whence $u$ is viscosity supersolution to 
	$F_{\varepsilon_j}(f;x,\D^2 v(x)) = 0$ in $\Omega'$.
	We recall from the monotonicity
	\eqref{ineq:monotonicity} that $0\leq u_{\varepsilon_j}-u$.
	Since, by elementary arguments,
	$
	u_{\varepsilon_j} 
	 - \max_{x \in \partial \Omega'} (u_{\varepsilon_j}(x) - u(x)) 
	 \leq u
	$ 
	holds on $\partial\Omega'$
	and the left-hand side of this estimate is
	a viscosity solution to $F_{\varepsilon_j}(f;x,\D^2 v(x)) = 0$ in $\Omega'$,
	the comparison principle from \Cref{lem:comparison-principle}
    leads to
	\begin{align}
		0 
		\leq \max_{x \in \overline{\Omega'}}(u_{\varepsilon_j}(x) - u(x)) 
		\leq \max_{x \in \partial\Omega'} (u_{\varepsilon_j}(x) - u(x)).
		\label{ineq:proof-convergence-comparison-principle}
	\end{align}
	This and \eqref{ineq:proof-uniform-convergence-boundary-layer} conclude $\|u - u_{\varepsilon_j}\|_{L^\infty(\Omega)} \leq \nu$ for all $j \geq N$ and so, $u_{\varepsilon_j}$ converges uniformly to $u$ in $\Omega$ as $j \to \infty$.\\[1em]
	\emph{Proof of \Cref{thm:convergence}(b).}
	Given $0 < \varepsilon \leq 1/2$. Recall $C_2 \coloneqq C_2(r,R,\|\log f\|_{C^2(\overline{\Omega})})$ from \Cref{thm:existence-uniqueness-Monge-Ampere}(c) and define the compact subset $\Omega' \coloneqq \{x \in \Omega: \dist(x,\partial \Omega)^{16} \geq 5 C_2^2 \varepsilon/(4\lambda)\} \Subset \Omega$ of $\Omega$.
	The Pogorelov estimate \eqref{ineq:Pogorelov} implies
	\begin{align*}
		|\D^2 u(x)|^2 \leq C_2^2/\dist(x,\partial \Omega)^{16} \leq C_3(\varepsilon)\lambda/\varepsilon \text{ for all } x \in \Omega'.
	\end{align*}
	Hence, \eqref{ineq:proof-convergence-comparison-principle} holds verbatim with $\varepsilon_j$ replaced by $\varepsilon$.
	The function $w \coloneqq 0 \in H^2(\Omega)$ satisfies $F_\varepsilon(f;x,\D^2 w(x)) > 0$ for all $x \in \Omega$ and so, $0$ is a viscosity supersolution to \eqref{pr:HJB-regularized}. This, \Cref{lem:comparison-principle}, and \eqref{ineq:monotonicity} prove, for all $0 < \varepsilon \leq 1/2$,
	\begin{align*}
		u \leq u_\varepsilon \leq 0 \text{ in } \Omega.
	\end{align*}
	Thus, the Alexandrov maximum principle \cite[Theorem 2.8]{Figalli2017} prove in two space dimensions that
	\begin{align*}
		(u(x) - u_\varepsilon(x))^2 \leq u(x)^2 \leq \mathrm{diam}(\Omega) (5C_2^2\varepsilon)^{1/16}\|f\|_{L^1(\Omega)}/(4\lambda)^{1/16}
	\end{align*}
	for all $x \in \overline{\Omega\setminus\Omega'}$.
	This and \eqref{ineq:proof-convergence-comparison-principle} conclude the proof of \eqref{ineq:rate-eps}.\qedhere
\end{proof}
\begin{remark}[inhomogeneous data on convex domain]
	Under the assumption that the Alexandrov solution $u$ in \Cref{thm:convergence} is strictly convex for given inhomogeneous data $g \neq 0$ on a merely convex domain, the convergence result from \Cref{thm:convergence}(a) remains valid. 
\end{remark}

The stability of Alexandrov solutions and \Cref{thm:convergence} provide
convergence for more general right-hand sides.
\begin{definition}[class of admissible right-hand sides]\label{def:L1plus}
We define the class 
\begin{align*}
 L^1_\uparrow(\Omega):=
  \{ f\in L^1(\Omega) :     
     \lim_{j\to\infty}\|f-f_j\|_{L^1(\Omega)}=0 \text{ for some sequence }
     f_j\in C(\overline \Omega) &
     \\
     \text{ with } 
      0<f_{j}\leq f_{j+1} \text{ in } \overline\Omega
      \text{ for all }j\in\mathbb N
  \}&
\end{align*}
of $L^1$ functions that can be represented as the limit of monotonically increasing
sequences of uniformly positive continuous functions.
\end{definition}
\begin{remark}[on the class $L^1_\uparrow(\Omega)$]
 The set $L^1_\uparrow(\Omega)$ can equivalently be defined as the set
 of monotone $L^1$ limits of positive smooth functions from
 $C^\infty(\overline\Omega)$.
 It is a strict subset of $L^1(\Omega)$. For example, the (shifted)
 indicator function of a nowhere dense set of positive Lebesgue
 measure is not contained in $L^1_\uparrow(\Omega)$.
 However, the positive continuous functions of $C(\overline\Omega)$
 as well as positive piecewise polynomial functions are included.
\end{remark}
\begin{theorem}[convergence for non-smooth right-hand side]\label{thm:convergence-nonsmooth}
	Let \Cref{assumption:structure} hold and let
	$f \in L^1_\uparrow(\Omega)$ and $g \in H^2(\Omega) \cap C^{1,\beta}(\overline{\Omega})$ with $0 < \beta < 1$
	be given.
	For any monotonically decreasing sequence $\varepsilon_j \searrow 0$,
	the sequence $(u_{\varepsilon_j})_{j \in \mathbb{N}}$ of 
	strong solutions $u_{\varepsilon_j} \in H^2(\Omega)$ to 
	\eqref{pr:HJB-regularized} with $\varepsilon \coloneqq \varepsilon_j$
	converges, as $j \to \infty$, uniformly in $\Omega$ to the Alexandrov solution 
	$u$ to the Monge--Amp\`ere equation \eqref{pr:Monge-Ampere}.
\end{theorem}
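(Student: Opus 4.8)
The plan is to reduce the non-smooth case to the smooth case (Theorem~\ref{thm:convergence}(a)) via a two-parameter approximation argument, exploiting the monotone structure both in the regularization parameter $\varepsilon$ and in the approximating sequence $f_j$, together with stability of Alexandrov solutions under $L^1$ convergence of the data. First I would invoke the definition of $L^1_\uparrow(\Omega)$ to fix a sequence $f_j \in C(\overline\Omega)$ with $0 < f_j \leq f_{j+1}$ in $\overline\Omega$ and $\|f - f_j\|_{L^1(\Omega)} \to 0$. Each $f_j$ is continuous and uniformly positive, but not necessarily H\"older continuous; so I would first regularize $f_j$ slightly (mollification plus a small positive constant) to obtain $\widetilde f_j \in C^{0,\alpha}(\Omega)$ with $0 < \lambda_j \leq \widetilde f_j \leq \Lambda_j$ and $\|f_j - \widetilde f_j\|_{L^1(\Omega)} \to 0$, so that Theorem~\ref{thm:convergence}(a) applies to the data $(\widetilde f_j, g)$. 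Let $w_j \in C(\overline\Omega)$ be the Alexandrov (equivalently classical-interior) solution of \eqref{pr:Monge-Ampere} with right-hand side $\widetilde f_j$; by Theorem~\ref{thm:existence-uniqueness-Monge-Ampere}(b) this $w_j$ is covered by Theorem~\ref{thm:convergence}(a).

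The second step is the stability of Alexandrov solutions: since $\|\widetilde f_j - f\|_{L^1(\Omega)} \to 0$, the measures $\widetilde f_j \,\mathrm dx$ converge weakly-$*$ to $f\,\mathrm dx$, and by the standard stability theorem for Alexandrov solutions (e.g.\ \cite[Theorem~2.8 / Corollary~2.12]{Figalli2017} or \cite[Lemma~1.2.3]{Gutierrez2016}) the solutions $w_j$ converge uniformly on $\overline\Omega$ to the Alexandrov solution $u$ of \eqref{pr:Monge-Ampere} with data $(f,g)$. Hence, given $\nu > 0$, we may fix $j$ large enough that $\|u - w_j\|_{L^\infty(\Omega)} \leq \nu/2$. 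The third step is to handle the regularization: for this fixed $j$, Theorem~\ref{thm:convergence}(a) applied to $(\widetilde f_j, g)$ gives that the strong solutions $v_{\varepsilon}^{(j)} \in H^2(\Omega)$ of \eqref{pr:HJB-regularized} with right-hand side $\widetilde f_j$ converge uniformly to $w_j$ as $\varepsilon \searrow 0$; so there is $\varepsilon^{(j)} > 0$ with $\|v_\varepsilon^{(j)} - w_j\|_{L^\infty(\Omega)} \leq \nu/2$ for $\varepsilon \leq \varepsilon^{(j)}$. Finally, I would compare $u_{\varepsilon_k}$ (the solution with the true data $f$) to $v_{\varepsilon_k}^{(j)}$ using the stability estimate Theorem~\ref{thm:existence-uniqueness-regularized}(a): $\|u_{\varepsilon_k} - v_{\varepsilon_k}^{(j)}\|_{L^\infty(\Omega)} \lesssim \|f - \widetilde f_j\|_{L^1(\Omega)}^{1/2}/\varepsilon_k$ — but this blows up as $\varepsilon_k \to 0$, which is exactly the obstacle.

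The resolution, and the technical heart of the argument, is to use monotonicity in $f$ to avoid the $1/\varepsilon$ factor. Because $\widetilde f_j$ can be chosen with $\widetilde f_j \leq f$ (mollify from below, or take $\widetilde f_j \leq f_j \leq f$ after subtracting a vanishing constant), the operator $F_\varepsilon(\widetilde f_j; x, \cdot) \leq F_\varepsilon(f; x, \cdot)$ pointwise, so $v_\varepsilon^{(j)}$ is a viscosity supersolution and $u_\varepsilon$ a subsolution of the \emph{same} regularized equation (with data $f$); the comparison principle Lemma~\ref{lem:comparison-principle} then yields $u_\varepsilon \leq v_\varepsilon^{(j)}$ in $\overline\Omega$ (note $v_\varepsilon^{(j)} = g = u_\varepsilon$ on $\partial\Omega$), uniformly in $\varepsilon$. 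For the reverse inequality one uses the monotonicity Lemma~\ref{lem:monotonicity} / Theorem~\ref{thm:convergence}(a) lower bound $w_j \leq u \leq u_\varepsilon$ combined with $v_\varepsilon^{(j)} \to w_j$: more precisely, along $\varepsilon_k$ one gets $u \leq u_{\varepsilon_k} \leq v_{\varepsilon_k}^{(j)} \leq w_j + \nu/2 \leq u + \nu$ for $k$ large, giving $\|u - u_{\varepsilon_k}\|_{L^\infty(\Omega)} \leq \nu$. Since $\nu > 0$ was arbitrary, uniform convergence follows. The main obstacle is thus correctly orchestrating the order of limits — choosing $j$ (hence $\widetilde f_j$) first to control the data error, then letting $\varepsilon_k \to 0$ — and ensuring the approximation $\widetilde f_j$ respects the one-sided bound $\widetilde f_j \leq f$ so that the comparison principle replaces the quantitatively useless stability estimate; one must also check that $u$ itself is the correct limit, i.e.\ that the Alexandrov solution with data $f \in L^1_\uparrow(\Omega)$ is the uniform limit of the $w_j$, which is where the positivity and monotonicity in the definition of $L^1_\uparrow(\Omega)$ enter (guaranteeing the $w_j$ are genuine Alexandrov solutions with nondegenerate measures and that no mass escapes in the limit).
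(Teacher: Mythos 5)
Your overall architecture matches the paper's: approximate $f$ monotonically from below by smooth, uniformly positive $\widetilde f_j\leq f$, use stability of Alexandrov solutions plus monotonicity to get $w_j\to u$ uniformly, use \Cref{thm:convergence}(a) for each fixed $j$, and close the argument with one-sided comparison estimates rather than the $\varepsilon^{-1}$-weighted stability bound. The orchestration of limits (fix $j$ first, then send $\varepsilon\to 0$) is even slightly cleaner than the paper's diagonal-sequence argument. However, there is one genuine gap: you invoke \Cref{lem:comparison-principle} (to get $u_\varepsilon\leq v_\varepsilon^{(j)}$) and \Cref{lem:monotonicity} (to get $u\leq u_\varepsilon$) for the regularized equation with data $f$ itself. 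Both lemmas are stated only for $f\in C^{0,\alpha}(\Omega)$; for $f\in L^1_\uparrow(\Omega)$ the coefficient $x\mapsto\sqrt{f(x)}$ need not be continuous, $F_\varepsilon(f;\cdot,\cdot)$ is not in $C(\Omega\times\mathbb S)$, the strong solution $u_\varepsilon$ is not known to be a viscosity sub- or supersolution of anything (\Cref{thm:existence-uniqueness-regularized}(c) requires $f\in C(\Omega)$), and the structural hypotheses of the cited comparison theorem fail. So the two inequalities at the heart of your chain $u\leq u_{\varepsilon_k}\leq v_{\varepsilon_k}^{(j)}$ are exactly the ones that are not justified as written.

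The repair is the one the paper uses, and it is available from your own toolbox: apply the comparison principle only between solutions with \emph{smooth} data, i.e.\ prove $u_k\leq u^{(k)}_{\varepsilon_j}$ and $u^{(k+1)}_{\varepsilon_j}\leq u^{(k)}_{\varepsilon_j}$ for all $j,k$ (all coefficients H\"older continuous there), and then obtain the inequalities involving $u_{\varepsilon_j}$ and $u$ by passing to the limit $k\to\infty$ \emph{at fixed} $\varepsilon_j$. For fixed $\varepsilon_j$ the stability estimate of \Cref{thm:existence-uniqueness-regularized}(a) gives $\|u_{\varepsilon_j}-u^{(k)}_{\varepsilon_j}\|_{L^\infty(\Omega)}\lesssim \varepsilon_j^{-1}\|f-f_k\|_{L^1(\Omega)}^{1/2}\to 0$ as $k\to\infty$ --- the $\varepsilon^{-1}$ factor you correctly identified as useless for the joint limit is harmless here because $\varepsilon_j$ is frozen --- so the pointwise bounds $u\leq u_{\varepsilon_j}\leq u^{(k)}_{\varepsilon_j}$ survive the limit. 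With that substitution your proof goes through and is essentially the paper's.
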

\begin{proof}
	Given $f \in L^1_\uparrow(\Omega)$, let the sequence 
	$(f_k)_{k\in \mathbb{N}} \subset C^\infty(\overline{\Omega})$ approximate
	$f$ in $L^1(\Omega)$ from below,
	i.e., $0<\lambda\leq f_{k} \leq f_{k+1} \leq f$ a.e.\ in
	$\Omega$ for all $k \in \mathbb{N}$ and $\lim_{k \to \infty} \|f - f_k\|_{L^1(\Omega)} = 0$.
	The comparison principle \cite[Theorem 2.10]{Figalli2017}
	proves that the sequence of Alexandrov solutions $u_k \in C(\overline{\Omega})$ to
	\eqref{pr:Monge-Ampere-HJB} with $f$ replaced by $f_k$
	satisfies the monotonicity $u \leq u_{k+1} \leq u_k$ pointwise in $\overline{\Omega}$ for all $k \in \mathbb{N}$.
	The stability of Alexandrov solutions \cite[Corollary 2.12, Proposition 2.16]{Figalli2017}
	proves that $(u_k)_{k \in \mathbb{N}}$ converges locally uniformly to $u$ as $k \to \infty$.
	This and the monotonicity of the sequence $(u_k)_{k \in \mathbb{N}}$ show the uniform convergence
	$\lim_{k \to \infty}\|u - u_k\|_{L^\infty(\Omega)} = 0$.
	For any pair $(j,k) \in \mathbb{N}^2$, let $u_{\varepsilon_j}^{(k)} \in H^2(\Omega)$ solve
	\begin{align*}
		F_{\varepsilon_j}(f_k;x,\D^2 u_{\varepsilon_j}^{(k)}(x)) = 0 \text{ in } \Omega \quad\text{and}\quad u_{\varepsilon_j}^{(k)} = g \text{ on } \partial \Omega.
	\end{align*}
	\Cref{lem:monotonicity} and \Cref{thm:convergence} imply that
	the sequence $(u_{\varepsilon_j}^{(k)})_{j \in \mathbb{N}}$ is pointwise monotonically
	decreasing with the uniform limit $u_k$ for $j \to \infty$, i.e.,
	\begin{align}
		u_k \leq u_{\varepsilon_{j+1}}^{(k)} \leq u_{\varepsilon_{j}}^{(k)} \text{ in } \overline{\Omega} \text{ for all } j,k \in \mathbb{N} 
		\quad\text{and}\quad \lim_{j \to \infty} \|u_k - u_{\varepsilon_j}^{(k)}\|_{L^\infty(\Omega)} = 0.
		\label{ineq:u_k-monotonicity}
	\end{align}
	In particular, there exists, for each $k \in \mathbb{N}$, an index $j(k) \in \mathbb{N}$ such that $\|u_k - u^{(k)}_{\varepsilon_{j}}\|_{L^\infty(\Omega)} \leq 1/k$ for all $j \geq j(k)$.
	The sequence $(j(k))_{k \in \mathbb{N}}$ can be chosen so that $j(k) < j(k+1)$ with $\lim_{k \to \infty} j(k) = \infty$.
	This and the triangle inequality provide
	\begin{align}
		\|u - u_{\varepsilon_{j(k)}}^{(k)}\|_{L^\infty(\Omega)} \leq \|u - u_k\|_{L^\infty(\Omega)} + \|u_k - u_{\varepsilon_{j(k)}}^{(k)}\|_{L^\infty(\Omega)} \to 0 \quad\text{as } k \to \infty.
		\label{ineq:uniform-convergence-subsequence-1}
	\end{align}
	Recall $u_{\varepsilon_j}^{(k)}\in C^2(\Omega)$ from
	\Cref{thm:existence-uniqueness-regularized}(d) for all pairs $(j,k)$.
	From $f_k\leq f_{k+1}$ in $\overline{\Omega}$, we deduce
	$$
	0=F_{\varepsilon_j}(f_k;x,\D^2 u_{\varepsilon_j}^{(k)} (x))
	\leq 
	F_{\varepsilon_j}(f_{k+1};x,\D^2 u_{\varepsilon_j}^{(k)} (x))
	\quad\text{in }\Omega,
	$$
	whence $u_{\varepsilon_j}^{(k)}$ is supersolution to
	$
	F_{\varepsilon_j}(f_{k+1};x,\D^2 u_{\varepsilon_j}^{(k+1)} (x)) = 0.
	$
	The comparison principle from \Cref{lem:comparison-principle} verifies
	$u_{\varepsilon_j}^{(k+1)} \leq u_{\varepsilon_j}^{(k)}$ in $\overline{\Omega}$ for any $j,k \in \mathbb{N}$.
	This and the limit $\|u_{\varepsilon_{j}} - u_{\varepsilon_j}^{(k)}\|_{L^\infty(\Omega)} \lesssim \|u_{\varepsilon_{j}} - u_{\varepsilon_j}^{(k)}\|_{H^2(\Omega)} \lesssim \|f - f_k\|_{L^1(\Omega)} \to 0$ as $k \to \infty$ from \Cref{thm:existence-uniqueness-regularized}(a) show
	that $u_{\varepsilon_j}$ is the pointwise limit of the sequence $(u_{\varepsilon_j}^{(k)})_{k \in \mathbb{N}}$ as $k \to \infty$ with
	\begin{align}
		u_{\varepsilon_j} \leq u_{\varepsilon_{j}}^{(k)} \text{ in } \overline{\Omega} \text{ for all } j,k \in \mathbb{N}.
		\label{ineq:monotonicity-u-eps-in-k}
	\end{align}
	Since $u_k \leq u_{\varepsilon_{j+1}}^{(k)} \leq u_{\varepsilon_j}^{(k)}$ holds pointwise in $\overline{\Omega}$ for any $j,k \in \mathbb{N}$ from \eqref{ineq:u_k-monotonicity}, 
	the passage to the limit as $k \to \infty$ results in $u \leq u_{\varepsilon_{j+1}} \leq u_{\varepsilon_{j}}$ in $\overline{\Omega}$.
	This, $u \leq u_{\varepsilon_{j(k)}} \leq u_{\varepsilon_{j(k)}}^{(k)}$ in $\overline{\Omega}$ from \eqref{ineq:monotonicity-u-eps-in-k} applied to the subsequence $(j(k))_{k\in \mathbb{N}}$, and \eqref{ineq:uniform-convergence-subsequence-1} confirm
	\begin{align}
		\|u - u_{\varepsilon_{j(k)}}\|_{L^\infty(\Omega)} \leq \|u - u^{(k)}_{\varepsilon_{j(k)}}\|_{L^\infty(\Omega)} \to 0 \text{ as } k \to \infty.
	\end{align}
	In other words, $u$ is the uniform limit of the subsequence $(u_{\varepsilon_{j(k)}})_{k \in \mathbb{N}}$.
	Hence, the monotonicity $u \leq u_{\varepsilon_{j+1}} \leq u_{\varepsilon_{j}}$ in $\overline{\Omega}$ for all $j \in \mathbb{N}$ concludes that the whole sequence $(u_{\varepsilon_j})_{j \in \mathbb{N}}$ converges uniformly to $u$.
\end{proof}

\section{Finite element discretization}\label{sec:discretization}
This section introduces the mixed formulation in 
\cite{GallistlSueli2019} for the approximation of the regularized PDE 
\eqref{pr:HJB-regularized} with finite elements.
The problem \eqref{pr:HJB-regularized} at hand is a special case of
\cite{SmearsSueli2014,GallistlSueli2019}, where $F_\varepsilon$ is 
independent of $u$ and $\nabla u$.
Throughout this section, we let $f \in L^1(\Omega)$ with $f \geq 0$
and $g \in H^2(\Omega)$ and fix $0  < \varepsilon \leq 1/2$.

\subsection{Review on the mixed formulation}
Recall the space $W = H^1_t(\Omega;\R^2)$ of Sobolev vector fields
with weak gradient and vanishing tangential trace,
endowed with the norm $\|\D \cdot\|_{L^2(\Omega)}$. 
This is indeed a norm of $W$, cf.~\cite[Eq.~(2.11)]{GallistlSueli2019}. 
Since the domain $\Omega$ is convex, any $w \in W$ satisfies
the following generalized Miranda--Talenti inequality
\cite{CostabelDauge1999}
\begin{align}
	\|\D w\|_{L^2(\Omega)}^2 
	\leq \|\div w\|_{L^2(\Omega)}^2 + \|\rot w\|_{L^2(\Omega)}^2.
	\label{ineq:norm-W}
\end{align}
On the other hand, in two space dimensions, the elementary bound
\begin{align}
	\|\div w\|_{L^2(\Omega)}^2 
	+ \|\rot w\|_{L^2(\Omega)}^2
	 \leq 2\|\D w\|_{L^2(\Omega)}^2
	\label{ineq:norm-W-reverse}
\end{align}
holds.
The substitution 
$w_\varepsilon \coloneqq \nabla (u_\varepsilon - g) \in W$
for the strong solution $u_\varepsilon \in H^2(\Omega)$ 
to \eqref{pr:HJB-regularized} leads to 
$F_\varepsilon(f;x,\D (w_\varepsilon + \nabla g)(x)) = 0$ a.e.~in $\Omega$. 
This and the surjective testing with functions in $\div W$ 
\cite{GallistlSueli2019} motivates the definition of the semilinear 
form $a=a_\varepsilon: W \times W \to \R$ 
by, for an open subset $\omega  \subset \Omega$ and all $w,z \in W$,
\begin{align*}
	a(w,z) 
	&\coloneqq -\int_\Omega 
	  F_{\gamma,\varepsilon}(f;\cdot, \D (w + \nabla g)) \div z \d{x} 
	  + \sigma\int_\Omega \rot w\, \rot z \d{x}
\end{align*}
with $F_{\gamma,\varepsilon}$ from \eqref{def:F-gamma-eps} 
and the stabilization parameter
$\sigma \coloneqq 1 - \sqrt{1 - \delta(\varepsilon)}/2$. 
The mixed problem seeks $w_\varepsilon \in W$ 
and 
$
 u_\varepsilon \in \mathcal{A} 
  \coloneqq g + V$ with $V = H^1_0(\Omega)
$ 
such that
\begin{align}
	a(w_\varepsilon,z) &= 0 \quad\text{for all } z \in W,\label{pr:mixed-1}\\
	(\nabla u_\varepsilon, \nabla v)_{L^2(\Omega)} &= (w_\varepsilon + \nabla g, \nabla v)_{L^2(\Omega)} \quad\text{for all } v \in V. \label{pr:mixed-2}
\end{align}
\begin{theorem}[solution to mixed problem]\label{thm:existence-uniqueness-mixed}
	There exits a unique solution 
	$(w_\varepsilon, u_\varepsilon) \in W \times \mathcal{A}$ 
	to \eqref{pr:mixed-1}--\eqref{pr:mixed-2} with 
	$w_\varepsilon = \nabla (u_\varepsilon - g)$. 
	The function $u_\varepsilon \in H^2(\Omega)$ is strong solution 
	to \eqref{pr:HJB-regularized}.
\end{theorem}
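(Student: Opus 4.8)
The plan is to read \eqref{pr:mixed-1} as a monotone operator equation on the Hilbert space $W$ with norm $\|\D\cdot\|_{L^2(\Omega)}$, to solve it by the Browder--Minty theorem, to recover $u_\varepsilon$ from the Poisson-type relation \eqref{pr:mixed-2}, and finally to identify the pair $(w_\varepsilon,u_\varepsilon)$ with the gradient data of the strong solution provided by \Cref{thm:existence-uniqueness-regularized}(b), so that the strong solution property and the relation $w_\varepsilon=\nabla(u_\varepsilon-g)$ come for free.

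The crux of the argument, and the step I expect to be the main obstacle, is the strong monotonicity of $a$. First I would record that $F_{\gamma,\varepsilon}(f;x,\cdot)$ depends only on the symmetric part of its matrix argument because every $A\in\mathbb S(\varepsilon)$ is symmetric, and that the identity $\gamma(A)(-A:M)-\gamma(A)(-A:N)=-\tr(M-N)-(\gamma(A)A-I):(M-N)$ together with the bound $|\gamma(A)A-I|\le\sqrt{1-\delta(\varepsilon)}$ for $A\in\mathbb S(\varepsilon)$ (which underlies \eqref{ineq:pw-estimate}) yields, for all $M,N\in\M$,
\[
 \bigl|F_{\gamma,\varepsilon}(f;x,M)-F_{\gamma,\varepsilon}(f;x,N)+\tr(M-N)\bigr|\le\sqrt{1-\delta(\varepsilon)}\,\bigl|\mathrm{sym}(M-N)\bigr|.
\]
Taking $M=\D(w+\nabla g)$, $N=\D(\tilde w+\nabla g)$, $e\coloneqq w-\tilde w$ (so $\tr(M-N)=\div e$ and $\mathrm{sym}(M-N)=\mathrm{sym}\D e$), setting $G\coloneqq F_{\gamma,\varepsilon}(f;\cdot,\D(w+\nabla g))-F_{\gamma,\varepsilon}(f;\cdot,\D(\tilde w+\nabla g))$, and splitting $-G=\div e+(-G-\div e)$, the Cauchy--Schwarz inequality gives $-\int_\Omega G\,\div e\d x\ge\|\div e\|_{L^2(\Omega)}^2-\sqrt{1-\delta(\varepsilon)}\,\|\mathrm{sym}\D e\|_{L^2(\Omega)}\|\div e\|_{L^2(\Omega)}$. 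Adding the stabilization $\sigma\|\rot e\|_{L^2(\Omega)}^2$, bounding $\|\mathrm{sym}\D e\|_{L^2(\Omega)}\le\|\D e\|_{L^2(\Omega)}$, applying the generalized Miranda--Talenti inequality \eqref{ineq:norm-W} (which uses convexity of $\Omega$) in the form $\|\D e\|_{L^2(\Omega)}^2\le\|\div e\|_{L^2(\Omega)}^2+\|\rot e\|_{L^2(\Omega)}^2$, and using Young's inequality, I expect the choice $\sigma=1-\sqrt{1-\delta(\varepsilon)}/2$ to be tuned exactly so that $a(w,e)-a(\tilde w,e)\ge(1-\sqrt{1-\delta(\varepsilon)})(\|\div e\|_{L^2(\Omega)}^2+\|\rot e\|_{L^2(\Omega)}^2)\ge(1-\sqrt{1-\delta(\varepsilon)})\|\D e\|_{L^2(\Omega)}^2$, a strictly positive multiple of $\|\D e\|_{L^2(\Omega)}^2$ since $0<\delta(\varepsilon)\le1$; this establishes the strong monotonicity.

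Lipschitz continuity of $z\mapsto a(w,z)-a(\tilde w,z)$ with respect to $\|\D\cdot\|_{L^2(\Omega)}$ is cheaper: it follows from $\|G\|_{L^2(\Omega)}\le2\|\D e\|_{L^2(\Omega)}$ (the matrix-slot Lipschitz bound behind \eqref{ineq:F-eps-upper-bound}) together with $\|\div\cdot\|_{L^2(\Omega)},\|\rot\cdot\|_{L^2(\Omega)}\le\|\D\cdot\|_{L^2(\Omega)}$. The Browder--Minty theorem then furnishes a unique $w_\varepsilon\in W$ with $a(w_\varepsilon,z)=0$ for all $z\in W$, and, with $w_\varepsilon$ fixed, \eqref{pr:mixed-2} is the weak Poisson problem $u_\varepsilon\in g+V$, $(\nabla(u_\varepsilon-g),\nabla v)_{L^2(\Omega)}=(w_\varepsilon,\nabla v)_{L^2(\Omega)}$ for all $v\in V$, which has a unique solution by the Lax--Milgram lemma. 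This already settles existence and uniqueness of the pair $(w_\varepsilon,u_\varepsilon)$.

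For the remaining identifications I would let $u^\star\in H^2(\Omega)$ be the strong solution to \eqref{pr:HJB-regularized} from \Cref{thm:existence-uniqueness-regularized}(b) and put $w^\star\coloneqq\nabla(u^\star-g)$; since $u^\star-g\in H^1_0(\Omega)\cap H^2(\Omega)$ its tangential trace vanishes, so $w^\star\in W$. Then $\D(w^\star+\nabla g)=\D^2u^\star$, hence $F_{\gamma,\varepsilon}(f;\cdot,\D(w^\star+\nabla g))=0$ a.e.\ (positivity of the weights $\gamma$ on $\mathbb S(\varepsilon)$ promotes $F_\varepsilon=0$ to $F_{\gamma,\varepsilon}=0$), and $\rot w^\star=\rot\nabla(u^\star-g)=0$, so $a(w^\star,z)=0$ for all $z\in W$; uniqueness from the Browder--Minty step forces $w_\varepsilon=w^\star=\nabla(u^\star-g)$, and then \eqref{pr:mixed-2} forces $u_\varepsilon=u^\star$. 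This yields $w_\varepsilon=\nabla(u_\varepsilon-g)$ and identifies $u_\varepsilon\in H^2(\Omega)$ as the strong solution to \eqref{pr:HJB-regularized}, as claimed. Read in the opposite direction, the same computation shows directly that $(w^\star,u^\star)$ solves \eqref{pr:mixed-1}--\eqref{pr:mixed-2}, so existence could also be obtained straight from \Cref{thm:existence-uniqueness-regularized}(b) with Browder--Minty contributing only uniqueness; if one instead wanted an argument independent of the scalar result, one would test \eqref{pr:mixed-1} with divergence-free $z\in W$ to obtain $\rot w_\varepsilon=0$, use simple connectedness of $\Omega$ to write $w_\varepsilon=\nabla\psi$, relate $\psi$ to $u_\varepsilon$ through \eqref{pr:mixed-2}, and then conclude $F_{\gamma,\varepsilon}(f;\cdot,\D^2u_\varepsilon)=0$ a.e.\ from the surjectivity of $\div\colon W\to L^2(\Omega)$.
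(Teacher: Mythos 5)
Your proposal is correct and follows essentially the same route as the paper: it establishes the strong monotonicity and Lipschitz continuity of $a$ via the pointwise estimate behind \eqref{ineq:pw-estimate} (with the Cordes bound $|\gamma(A)A-I|\le\sqrt{1-\delta(\varepsilon)}$, Young's inequality, and \eqref{ineq:norm-W}), invokes Browder--Minty for \eqref{pr:mixed-1} and a standard coercive argument for \eqref{pr:mixed-2}, and then identifies the unique pair with $(\nabla(\widetilde u_\varepsilon-g),\widetilde u_\varepsilon)$ built from the strong solution of \Cref{thm:existence-uniqueness-regularized}(b). The only deviations (keeping track of $\mathrm{sym}\,\D e$ before bounding it by $|\D e|$, and Lax--Milgram in place of energy minimization for the second equation) are cosmetic.
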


The proof is essentially contained in
 \cite{SmearsSueli2014,GallistlSueli2019} and sketched below
 for possibly inhomogeneous boundary data.
 It is based on the following lemma.

\begin{lemma}[monotonicity and Lipschitz continuity of $a$]\label{lem:monotonicity-Lipschitz-continuity}
	There exist positive constants
	 $c_\mathrm{mon}\approx \varepsilon$ and 
	 $C_\mathrm{Lip}\approx 1$ such that any $w,w',z \in W$ satisfy (a)--(b).
	\begin{enumerate}
		\item[(a)] (monotonicity) $c_\mathrm{mon}\|\D(w - z)\|_{L^2(\Omega)}^2 \leq a(w,w-z) - a(z,w-z)$.
		\item[(b)] (Lipschitz continuity) 
		$a(w,z) - a(w',z) 
		\leq C_\mathrm{Lip}\|\D(w - w')\|_{L^2(\Omega)}
		\|\D z\|_{L^2(\Omega)}$.
	\end{enumerate}
\end{lemma}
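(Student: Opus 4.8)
The plan is to establish both parts of \Cref{lem:monotonicity-Lipschitz-continuity} by transferring the pointwise bounds on $F_{\gamma,\varepsilon}$ already available from \Cref{thm:existence-uniqueness-regularized} to the bilinear setting, using the two generalized Miranda--Talenti inequalities \eqref{ineq:norm-W}--\eqref{ineq:norm-W-reverse} to convert the $\D w$-norm on $W$ into the combination of $\div$ and $\rot$ norms that appears after testing with $\div z$ and $\rot z$. First I would record the two key pointwise estimates: from \eqref{ineq:pw-estimate}, for $w, z \in W$ the difference $F_{\gamma,\varepsilon}(f;x,\D(w+\nabla g)(x)) - F_{\gamma,\varepsilon}(f;x,\D(z+\nabla g)(x))$ equals $\Delta(w-z)$-type term (more precisely $\div(w-z)$ up to an error controlled by $\sqrt{1-\delta(\varepsilon)}\,|\D(w-z)|$), and from \eqref{ineq:F-eps-upper-bound} the same difference is bounded in absolute value by $2|\D(w-z)(x)|$ pointwise.

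For part (b), the Lipschitz bound, I would simply expand $a(w,z) - a(w',z)$ into its two contributions. The nonlinear term is bounded by $\int_\Omega |F_{\gamma,\varepsilon}(f;\cdot,\D(w+\nabla g)) - F_{\gamma,\varepsilon}(f;\cdot,\D(w'+\nabla g))|\,|\div z|\d{x} \le 2\int_\Omega |\D(w-w')|\,|\div z|\d{x} \le 2\|\D(w-w')\|_{L^2(\Omega)}\|\div z\|_{L^2(\Omega)}$ by the pointwise estimate \eqref{ineq:F-eps-upper-bound} and Cauchy--Schwarz, and $\|\div z\|_{L^2(\Omega)} \le \sqrt 2\|\D z\|_{L^2(\Omega)}$ by \eqref{ineq:norm-W-reverse} (or even $\le \|\D z\|_{L^2(\Omega)}$ directly). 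The stabilization term contributes $\sigma\int_\Omega \rot(w-w')\rot z\d{x} \le \sigma\|\rot(w-w')\|_{L^2(\Omega)}\|\rot z\|_{L^2(\Omega)} \le 2\sigma\|\D(w-w')\|_{L^2(\Omega)}\|\D z\|_{L^2(\Omega)}$, again using \eqref{ineq:norm-W-reverse}. Adding these gives $C_\mathrm{Lip} \approx 1$ since $0 < \sigma \le 1$.

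For part (a), the monotonicity, set $e := w - z$ and test the difference $a(w,e) - a(z,e)$. The stabilization part yields exactly $\sigma\|\rot e\|_{L^2(\Omega)}^2 \ge 0$, which is the point of adding it. The nonlinear part is $-\int_\Omega (F_{\gamma,\varepsilon}(f;\cdot,\D(w+\nabla g)) - F_{\gamma,\varepsilon}(f;\cdot,\D(z+\nabla g)))\div e\d{x}$; writing $F_{\gamma,\varepsilon}(f;\cdot,\D(w+\nabla g)) - F_{\gamma,\varepsilon}(f;\cdot,\D(z+\nabla g)) = \div e + R$ with $\|R\|_{L^2(\Omega)} \le \sqrt{1-\delta(\varepsilon)}\|\D e\|_{L^2(\Omega)}$ by \eqref{ineq:pw-estimate}, this part equals $-\|\div e\|_{L^2(\Omega)}^2 - \int_\Omega R\,\div e\d{x} \ge -\|\div e\|_{L^2(\Omega)}^2$... but the sign is wrong, so I must be careful with the sign convention: the defining minus sign in $a$ together with $w_\varepsilon = \nabla(u_\varepsilon - g)$ makes $-\div e$ the leading term, so the nonlinear contribution is in fact $\|\div e\|_{L^2(\Omega)}^2 - \int_\Omega R\,\div e\d{x} \ge \|\div e\|_{L^2(\Omega)}^2 - \sqrt{1-\delta(\varepsilon)}\|\D e\|_{L^2(\Omega)}\|\div e\|_{L^2(\Omega)}$. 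Adding the stabilization term $\sigma\|\rot e\|_{L^2(\Omega)}^2$ and then using $\|\div e\|_{L^2(\Omega)}^2 + \|\rot e\|_{L^2(\Omega)}^2 \ge \|\D e\|_{L^2(\Omega)}^2$ from \eqref{ineq:norm-W} together with $\|\div e\|_{L^2(\Omega)} \le \|\D e\|_{L^2(\Omega)}$, one obtains a lower bound of the form $(1 - \sqrt{1-\delta(\varepsilon)})\|\D e\|_{L^2(\Omega)}^2$ times a constant, after absorbing the cross term; the precise choice $\sigma = 1 - \sqrt{1-\delta(\varepsilon)}/2$ is exactly tuned so that the $\rot$ and $\div$ contributions balance. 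Since $1 - \sqrt{1-\delta(\varepsilon)} \approx \delta(\varepsilon) \approx \varepsilon$ (recall $2\varepsilon \le \delta(\varepsilon) \le 1$), this yields $c_\mathrm{mon} \approx \varepsilon$.

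The main obstacle I anticipate is the bookkeeping in part (a): one must split $\|\D e\|_{L^2(\Omega)}^2$ judiciously between the $\div$-coercivity coming from the nonlinear term and the $\rot$-coercivity coming from the stabilization, and then handle the indefinite cross term $\sqrt{1-\delta(\varepsilon)}\|\D e\|_{L^2(\Omega)}\|\div e\|_{L^2(\Omega)}$ by a weighted Young inequality so that the remaining positive quadratic form is still bounded below by a positive multiple of $\|\D e\|_{L^2(\Omega)}^2$. This is precisely the calculation carried out in \cite{SmearsSueli2014,GallistlSueli2019}, so I would follow their choice of weights; the open subset $\omega$ appearing in the definition of $a$ is harmless here since the stabilization can equally be integrated over all of $\Omega$ without affecting the estimates (or, if $\omega \subsetneq \Omega$, one restricts the $\rot$ terms accordingly and the argument is unchanged).
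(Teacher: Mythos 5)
Your proposal is correct and follows essentially the same route as the paper: part (b) via the pointwise bound \eqref{ineq:F-eps-upper-bound}, Cauchy--Schwarz, and \eqref{ineq:norm-W-reverse}, and part (a) via \eqref{ineq:pw-estimate}, the Young inequality with weight $\sqrt{1-\delta(\varepsilon)}/2$ (for which $\sigma=1-\sqrt{1-\delta(\varepsilon)}/2$ is exactly tuned), and \eqref{ineq:norm-W}, yielding $c_\mathrm{mon}=1-\sqrt{1-\delta(\varepsilon)}\approx\varepsilon$. Your sign correction in part (a) resolves itself exactly as in the paper, so no further changes are needed.
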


\begin{proof}[Proof of \Cref{lem:monotonicity-Lipschitz-continuity}(a)]
	Abbreviate $\delta_w \coloneqq w - z$. The replacements $u_\varepsilon \coloneqq \D (w + \nabla g)$ and $v_\varepsilon \coloneqq \D(z - \nabla g)$ in \eqref{ineq:pw-estimate} lead to
	\begin{align*}
		|F_{\gamma,\varepsilon}(f; \cdot, \D (z + \nabla g)) - F_{\gamma,\varepsilon}(f; \cdot, \D (w + \nabla g)) - \div \delta_w| \leq \sqrt{1 - \delta(\varepsilon)}|\D \delta_w|
	\end{align*}
	a.e.~in $\Omega$ with $0 < \delta(\varepsilon) \leq 1$ from
	\Cref{sec:analysis-regularized-PDE}.
	Hence, it follows
	\begin{align*}
		&\|\div \delta_w\|^2_{L^2(\Omega)} + \sigma\|\rot\delta_w\|^2_{L^2(\Omega)}\\
		&\quad - \sqrt{1 - \delta(\varepsilon)}\|\D \delta_w\|_{L^2(\Omega)}\|\div \delta_w\|_{L^2(\Omega)}
		\leq a(w, \delta_w) - a(z, \delta_w).
	\end{align*}	
	This, the Young inequality 
	$\alpha\beta\sqrt{1-\delta(\varepsilon)} \leq \alpha^2\sqrt{1-\delta(\varepsilon)}/2 
	+ \beta^2\sqrt{1-\delta(\varepsilon)}/2$ for all $\alpha,\beta \geq 0$, 
	and \eqref{ineq:norm-W} conclude the proof of (a) 
	with $c_\mathrm{mon} \coloneqq 1 - \sqrt{1 - \delta(\varepsilon)}$.\\[1em]
	\emph{Proof of \Cref{lem:monotonicity-Lipschitz-continuity}(b).}
	Abbreviate $\delta_w \coloneqq w - w'$. 
	The choice $M \coloneqq \D(w + \nabla g)$ 
	and $N \coloneqq \D(w' + \nabla g)$ 
	in \eqref{ineq:F-eps-upper-bound} and the Cauchy inequality lead to
	\begin{align*}
		|a(w,z) - a(w',z)| 
		\leq \|\D \delta_w\|_{L^2(\omega)}\|\div z\|_{L^2(\omega)} 
		+ \sigma\|\rot \delta_w\|_{L^2(\omega)}\|\rot z\|_{L^2(\omega)}.
	\end{align*}
	The Cauchy inequality in $\R^2$ and \eqref{ineq:norm-W-reverse} then establish (b).
\end{proof}

\begin{proof}[Proof of \Cref{thm:existence-uniqueness-mixed}]
	\Cref{lem:monotonicity-Lipschitz-continuity} implies that the linear functional $A: W \to W^*$, $A w \coloneqq a(w,\cdot)$ is strongly monotone and Lipschitz continuous. Thus, the Browder-Minty theorem \cite[Theorem 10.49]{RenardyRogers2004} shows the existence of a unique solution $w_\varepsilon \in W$ to \eqref{pr:mixed-1}. The second equation \eqref{pr:mixed-1} is equivalent to the minimization of the strongly convex energy $E(v) \coloneqq \|\nabla(v - g) - w_\varepsilon\|_{L^2(\Omega)}^2$ in $\mathcal{A}$. Hence, the solution $u_\varepsilon \in \mathcal{A}$ to \eqref{pr:mixed-2} exists and is unique. Notice that the strong solution $\widetilde{u}_\varepsilon \in \mathcal{A} \cap H^2(\Omega)$ to \eqref{pr:HJB-regularized} and $\widetilde{w}_\varepsilon \coloneqq \nabla (\widetilde{u}_\varepsilon - g) \in W$ solve \eqref{pr:mixed-1}--\eqref{pr:mixed-2}. The uniqueness of $u_\varepsilon$ and $w_\varepsilon$ conclude $u_\varepsilon = \widetilde{u}_\varepsilon$ and $w_\varepsilon = \widetilde{w}_\varepsilon$.
\end{proof}

\subsection{Discretization}
Let $W_h \subset W$ and $V_h \subset V$ be closed linear subspaces 
of $W$ and $V$. The discrete mixed problem seeks 
$w^{(\varepsilon)}_h=w_h \in W_h$ 
and $u^{(\varepsilon)}_h = u_h \in \mathcal{A}_h \coloneqq g + V_h$ such that
\begin{align}
	a(w_h,z_h) &= 0 \quad\text{for all } z_h \in W_h,
	\label{pr:discrete-mixed-1}\\
	(\nabla u_h, \nabla v_h)_{L^2(\Omega)} &= (w_h + \nabla g, \nabla v_h)_{L^2(\Omega)} \quad\text{for all } v_h \in V_h. \label{pr:discrete-mixed-2}
\end{align}
The strong monotonicity and Lipschitz continuity of $a$ give rise
the following a~priori error estimate.
\begin{theorem}[a~priori]\label{thm:apriori}
	There exists a unique discrete solution $(w_h,u_h) \in W_h \times \mathcal{A}_h$ to \eqref{pr:discrete-mixed-1}--\eqref{pr:discrete-mixed-2} with the a~priori estimates
	\begin{align}
		\|\D(w_\varepsilon - w_h)\|_{L^2(\Omega)} &\leq c_{\mathrm{mon}}^{-1}C_\mathrm{Lip} \min_{z_h \in W_h} \|\D(w_\varepsilon - z_h)\|_{L^2(\Omega)},
		\label{ineq:a-priori-1}\\
		\|\nabla(u_\varepsilon - u_h)\|_{L^2(\Omega)} &\leq 2\|w_\varepsilon - w_h\|_{L^2(\Omega)} + \min_{v_h \in \mathcal{A}_h}\|\nabla (u_\varepsilon - v_h)\|_{L^2(\Omega)}.
		\label{ineq:a-priori-2}
	\end{align}
	Notice that $\|w_\varepsilon - w_h\|_{L^2(\Omega)}$ 
	in \eqref{ineq:a-priori-2} can be controlled by 
	$\|\D(w_\varepsilon - w_h)\|_{L^2(\Omega)}$ due the Friedrichs inequality.
\end{theorem}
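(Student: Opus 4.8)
The plan is to mirror the continuous analysis of \Cref{thm:existence-uniqueness-mixed}: first establish well-posedness of the discrete system \eqref{pr:discrete-mixed-1}--\eqref{pr:discrete-mixed-2} by Browder--Minty, and then extract the two estimates from the Galerkin orthogonality that the conforming choices $W_h\subset W$, $V_h\subset V$ provide for free. For well-posedness, observe that \Cref{lem:monotonicity-Lipschitz-continuity} holds for \emph{all} arguments in $W$, so the restriction of $A\colon W\to W^*$, $Aw:=a(w,\cdot)$, to the closed subspace $W_h$ is still strongly monotone with constant $c_\mathrm{mon}$ and Lipschitz continuous with constant $C_\mathrm{Lip}$; Browder--Minty then gives a unique $w_h\in W_h$ solving \eqref{pr:discrete-mixed-1}. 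With $w_h$ fixed, \eqref{pr:discrete-mixed-2} is the Euler--Lagrange equation of the strongly convex energy $v_h\mapsto\|\nabla(v_h-g)-w_h\|_{L^2(\Omega)}^2$ on the affine space $\mathcal A_h=g+V_h$, hence admits a unique solution $u_h\in\mathcal A_h$.

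For \eqref{ineq:a-priori-1}, testing \eqref{pr:mixed-1} and \eqref{pr:discrete-mixed-1} with the same $z_h\in W_h$ yields the Galerkin orthogonality $a(w_\varepsilon,z_h)=a(w_h,z_h)$. Fix $z_h\in W_h$, apply monotonicity to the pair $(w_\varepsilon,w_h)$, split $w_\varepsilon-w_h=(w_\varepsilon-z_h)+(z_h-w_h)$ with $z_h-w_h\in W_h$, and use linearity of $a$ in its second argument together with the orthogonality to drop the $W_h$-part. This reduces the monotonicity bound to
\begin{align*}
 c_\mathrm{mon}\,\|\D(w_\varepsilon-w_h)\|_{L^2(\Omega)}^2
 &\le a(w_\varepsilon,w_\varepsilon-z_h)-a(w_h,w_\varepsilon-z_h)\\
 &\le C_\mathrm{Lip}\,\|\D(w_\varepsilon-w_h)\|_{L^2(\Omega)}\,\|\D(w_\varepsilon-z_h)\|_{L^2(\Omega)},
\end{align*}
using Lipschitz continuity in the last step. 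Dividing (the case $w_\varepsilon=w_h$ being trivial) and passing to the infimum over $z_h\in W_h$ gives \eqref{ineq:a-priori-1}.

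For \eqref{ineq:a-priori-2}, rewrite \eqref{pr:discrete-mixed-2} as $(\nabla(u_h-g)-w_h,\nabla v_h)_{L^2(\Omega)}=0$ for all $v_h\in V_h$, which identifies $\nabla(u_h-g)$ with the $L^2$-orthogonal projection of $w_h$ onto the closed subspace $\nabla V_h\subset L^2(\Omega;\R^2)$ (closed since $\nabla$ is a Hilbert-space isomorphism of $V_h$ onto it by the Friedrichs inequality). Since $w_\varepsilon=\nabla(u_\varepsilon-g)$ by \Cref{thm:existence-uniqueness-mixed}, we have $\nabla(u_\varepsilon-u_h)=(w_\varepsilon-w_h)+(w_h-\nabla(u_h-g))$, so the triangle inequality and the best-approximation property of the projection give
\begin{align*}
 \|\nabla(u_\varepsilon-u_h)\|_{L^2(\Omega)}
 \le \|w_\varepsilon-w_h\|_{L^2(\Omega)}+\min_{v_h\in V_h}\|w_h-\nabla v_h\|_{L^2(\Omega)}.
\end{align*}
Inserting $w_\varepsilon$ via $\|w_h-\nabla v_h\|_{L^2(\Omega)}\le\|w_h-w_\varepsilon\|_{L^2(\Omega)}+\|w_\varepsilon-\nabla v_h\|_{L^2(\Omega)}$, taking the minimum, and noting that $\min_{v_h\in V_h}\|w_\varepsilon-\nabla v_h\|_{L^2(\Omega)}=\min_{v_h\in\mathcal A_h}\|\nabla(u_\varepsilon-v_h)\|_{L^2(\Omega)}$ (replace $v_h$ by $g+v_h$) yields \eqref{ineq:a-priori-2}. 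The concluding remark of the theorem is then immediate from the Friedrichs inequality, which makes $\|\cdot\|_{L^2(\Omega)}$ dominated by $\|\D\cdot\|_{L^2(\Omega)}$ on $W$.

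I do not expect a genuine obstacle here: this is a C\'ea-type quasi-optimality argument for strongly monotone Lipschitz operators combined with the decoupling of the two mixed equations. The only points requiring some care are the bookkeeping with the affine trial space $\mathcal A_h=g+V_h$ and securing exactly the constant $2$ in \eqref{ineq:a-priori-2}, which comes from comparing $w_h$ with the best $\nabla V_h$-approximation of $w_\varepsilon$ (rather than of $w_h$) through the triangle inequality.
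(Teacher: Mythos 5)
Your proposal is correct, and for the well-posedness and for \eqref{ineq:a-priori-1} it coincides with the paper's proof: Browder--Minty on the closed subspace $W_h$ (plus the strongly convex quadratic energy for \eqref{pr:discrete-mixed-2}), followed by the standard C\'ea argument combining monotonicity, the Galerkin orthogonality $a(w_\varepsilon,z_h)=a(w_h,z_h)=0$, and Lipschitz continuity. For \eqref{ineq:a-priori-2} your route is a mild but genuine variant. The paper subtracts \eqref{pr:mixed-2} and \eqref{pr:discrete-mixed-2} to get $(\nabla e_u,\nabla\phi_h)_{L^2(\Omega)}=(e_w,\nabla\phi_h)_{L^2(\Omega)}$ for $\phi_h\in V_h$, inserts $\phi_h=v_h-u_h$ to obtain the identity $\|\nabla e_u\|_{L^2(\Omega)}^2=(\nabla e_u,\nabla(u_\varepsilon-v_h))_{L^2(\Omega)}+(e_w,\nabla(v_h-u_\varepsilon+e_u))_{L^2(\Omega)}$, and then applies Cauchy--Schwarz; extracting the constant $2$ from that requires resolving the resulting quadratic inequality in $\|\nabla e_u\|_{L^2(\Omega)}$. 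You instead read the same orthogonality relation as the statement that $\nabla(u_h-g)$ is the $L^2$-orthogonal projection of $w_h$ onto $\nabla V_h$ and conclude by two triangle inequalities; this is arguably more transparent, makes the origin of the factor $2$ explicit, and lands on exactly the same bound. The identification $\min_{v_h\in V_h}\|w_\varepsilon-\nabla v_h\|_{L^2(\Omega)}=\min_{v_h\in\mathcal A_h}\|\nabla(u_\varepsilon-v_h)\|_{L^2(\Omega)}$ that you use is correct since $w_\varepsilon=\nabla(u_\varepsilon-g)$. No gaps.
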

\begin{proof}
	The existence and uniqueness of discrete solutions to \eqref{pr:discrete-mixed-1}--\eqref{pr:discrete-mixed-2} follow directly from the Browder-Minty theorem. 
	We abbreviate $e_w \coloneqq w_\varepsilon - w_h$ and $e_u \coloneqq u_\varepsilon - u_h$.
	The monotonicity of $a$ from \Cref{lem:monotonicity-Lipschitz-continuity}(a), $a(w_\varepsilon,z_h) = a(w_h,z_h) = 0$ for all $z_h \in W_h$, and the Lipschitz continuity of $a$ in \Cref{lem:monotonicity-Lipschitz-continuity}(b) imply
	\begin{align}
		c_\mathrm{mon}\|\D e_w\|_{L^2(\Omega)}^2 &\leq a(w_\varepsilon,e_w) - a(w_h,e_w) = a(w_\varepsilon, w_\varepsilon - z_h) - a(w_h,w_\varepsilon - z_h)\nonumber\\
		&\leq C_\mathrm{Lip}\|\D e_w\|_{L^2(\Omega)}\|w_\varepsilon - z_h\|_{L^2(\Omega)}.
		\label{ineq:proof-a-posteriori-a}
	\end{align}
	This shows \eqref{ineq:a-priori-1}.
	The equations \eqref{pr:mixed-2} and \eqref{pr:discrete-mixed-2} prove, for all $\phi_h \in V_h$, that $(\nabla e_u,\nabla \phi_h)_{L^2(\Omega)} = (e_w, \nabla \phi_h)_{L^2(\Omega)}$. Hence, any $v_h \in \mathcal{A}_h$ satisfies
	\begin{align*}
		\|\nabla e_u\|_{L^2(\Omega)}^2 = (\nabla e_u, \nabla(u_\varepsilon - v_h))_{L^2(\Omega)} + (e_w,\nabla(v_h - u_\varepsilon + e_u))_{L^2(\Omega)}.
	\end{align*}
	This, the Cauchy inequality, and the choice $v_h \coloneqq \arg \min_{\phi_h \in \mathcal{A}_h} \|\nabla(u_\varepsilon - \phi_h)\|_{L^2(\Omega)}$ conclude \eqref{ineq:a-priori-2}.
\end{proof}

We deduce the following finite element convergence result
for the Monge--Amp\`ere equation.
In order to keep the technical details to a minimum, we assume $\Omega$ to be polygonal and, according to \Cref{assumption:structure}, $g \equiv 0$.
The extension to strictly convex domains is however possible; either with the techniques from \cite{Gallistl2019} or with other finite element approaches for curved domains, cf.~\cite{NEILAN2020105} and the references therein.

Let a quasi-uniform sequence of regular triangulations $(\mathcal{T}_\ell)_{\ell \in \mathbb{N}}$ 
be given such that $\mathcal{T}_\ell$ covers the domain $\cup_{T \in \mathcal{T}_\ell} T = \overline{\Omega}$ for all $\ell \in \mathbb{N}$ 
and the maximal mesh-size $h_\ell$ of $\mathcal{T}_\ell$ vanishes in the limit $\lim_{\ell \to \infty} h_\ell = 0$.
We discretize the variables $u$ and $w$ with Lagrange finite elements of order $k$; $V_\ell \coloneqq S^k_0(\mathcal{T}_\ell) \subset V$ and $W_\ell \coloneqq S^k_t(\mathcal{T}_\ell;\R^2) \subset W$.
For any $0 < \varepsilon \leq 1/2$ and $\ell \in \mathbb{N}$, let $(u_\ell^{(\varepsilon)}, w_\ell^{(\varepsilon)}) \in V_\ell \times W_\ell$ denote the solution to the discrete problem \eqref{pr:discrete-mixed-1}--\eqref{pr:discrete-mixed-2} with $V_h \coloneqq V_\ell$ and $W_h \coloneqq W_\ell$.

\begin{corollary}[convergence of FEM]\label{cor:convergence-FEM}
	Let \Cref{assumption:structure} hold and let
	$f \in L^1_\uparrow(\Omega)$ and $g \in H^2(\Omega) \cap C^{1,\beta}(\overline{\Omega})$ with $0 < \beta < 1$
	be given.
	The discrete solutions $u_\ell^{(\varepsilon)} \in \mathcal{A}_\ell$
 	converge uniformly to the Alexandrov solution $u \in C(\overline{\Omega})$ to the Monge--Amp\`ere equation \eqref{pr:Monge-Ampere}
 	in the following sense:
 	Given $\nu > 0$,
	there exist $\varepsilon_0 > 0$ such that
	\begin{align}
		\|u - u^{(\varepsilon)}_\ell\|_{L^\infty(\Omega)}
		\leq \nu
		\quad\text{for all } \varepsilon \leq \varepsilon_0
		\text{ and } \ell \geq \ell_0(\varepsilon)
		\label{ineq:convergence-FEM}
	\end{align}
	with a level $\ell_0(\varepsilon) \in \mathbb{N}$ that depends on the regularization parameter $\varepsilon$.
\end{corollary}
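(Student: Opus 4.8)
The strategy is to interpose the continuous regularized solution $u_\varepsilon\in H^2(\Omega)$ and to estimate, for every $\ell\in\mathbb N$ and $0<\varepsilon\leq 1/2$,
\[
 \|u-u^{(\varepsilon)}_\ell\|_{L^\infty(\Omega)}
 \leq \|u-u_\varepsilon\|_{L^\infty(\Omega)}
    + \|u_\varepsilon-u^{(\varepsilon)}_\ell\|_{L^\infty(\Omega)},
\]
choosing first $\varepsilon_0$ so that the first term is at most $\nu/2$ uniformly for $\varepsilon\leq\varepsilon_0$, and then, for each such fixed $\varepsilon$, a level $\ell_0(\varepsilon)$ so that the second term is at most $\nu/2$ for $\ell\geq\ell_0(\varepsilon)$. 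For the first term I would observe that the monotonicity derived in the proof of \Cref{thm:convergence-nonsmooth} in fact holds for any two parameters, i.e.\ $u\leq u_\varepsilon\leq u_\zeta$ in $\overline\Omega$ whenever $0<\varepsilon\leq\zeta\leq 1/2$: for $f\in C^{0,\alpha}(\Omega)$ this is \Cref{lem:monotonicity}, and for $f\in L^1_\uparrow(\Omega)$ one applies \Cref{lem:monotonicity} to the solutions associated with the lower approximations $f_k\in C^\infty(\overline\Omega)$ and passes to the limit $k\to\infty$ using \Cref{thm:existence-uniqueness-regularized}(a) and the stability of Alexandrov solutions, exactly as in the proof of \Cref{thm:convergence-nonsmooth}. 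Consequently $\varepsilon\mapsto\|u-u_\varepsilon\|_{L^\infty(\Omega)}=\max_{\overline\Omega}(u_\varepsilon-u)$ is nondecreasing; since \Cref{thm:convergence-nonsmooth} supplies a null sequence $\varepsilon_j\searrow 0$ along which this quantity tends to $0$, there is $\varepsilon_0>0$ with $\|u-u_\varepsilon\|_{L^\infty(\Omega)}\leq\nu/2$ for all $0<\varepsilon\leq\varepsilon_0$.

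Now fix $0<\varepsilon\leq\varepsilon_0$; from here on all constants may depend on $\varepsilon$. The a~priori bound \eqref{ineq:a-priori-1} of \Cref{thm:apriori}, together with the (standard) density of $\bigcup_\ell W_\ell$ in $W$ with respect to $\|\D\cdot\|_{L^2(\Omega)}$ on the quasi-uniform family $(\mathcal T_\ell)_\ell$ with $h_\ell\to 0$ — applicable because $w_\varepsilon=\nabla u_\varepsilon\in W$ (recall $g\equiv 0$) — yields
\[
 \|\D(w_\varepsilon-w^{(\varepsilon)}_\ell)\|_{L^2(\Omega)}
 \leq c_{\mathrm{mon}}^{-1}C_{\mathrm{Lip}}
   \min_{z_h\in W_\ell}\|\D(w_\varepsilon-z_h)\|_{L^2(\Omega)}
 \longrightarrow 0 \qquad(\ell\to\infty).
\]
By \eqref{ineq:a-priori-2} this already gives $\|\nabla(u_\varepsilon-u^{(\varepsilon)}_\ell)\|_{L^2(\Omega)}\to 0$; but in two dimensions the $H^1$ norm does not control the $L^\infty$ norm, and upgrading to uniform convergence is the main obstacle of the proof.

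To overcome it I would introduce the auxiliary function $z_\ell\in V=H^1_0(\Omega)$ defined by $(\nabla z_\ell,\nabla v)_{L^2(\Omega)}=(w^{(\varepsilon)}_\ell,\nabla v)_{L^2(\Omega)}$ for all $v\in V$. Comparison with \eqref{pr:discrete-mixed-2} identifies $u^{(\varepsilon)}_\ell$ (which lies in $\mathcal A_\ell=V_\ell$ since $g\equiv 0$) with the Galerkin projection of $z_\ell$ onto $V_\ell$, while comparison with \eqref{pr:mixed-2} and $w_\varepsilon=\nabla u_\varepsilon$ gives $\Delta(z_\ell-u_\varepsilon)=\div(w^{(\varepsilon)}_\ell-w_\varepsilon)$ weakly with $z_\ell-u_\varepsilon\in H^1_0(\Omega)$. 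Elliptic regularity on the convex polygon $\Omega$ (cf.\ the use of $\Delta V=L^2(\Omega)$ in the proof of \Cref{thm:existence-uniqueness-regularized}(b)) then yields
\[
 \|z_\ell-u_\varepsilon\|_{H^2(\Omega)}
 \lesssim \|\div(w^{(\varepsilon)}_\ell-w_\varepsilon)\|_{L^2(\Omega)}
 \lesssim \|\D(w_\varepsilon-w^{(\varepsilon)}_\ell)\|_{L^2(\Omega)}
 \longrightarrow 0 ,
\]
so the Sobolev embedding $H^2(\Omega)\subset C^{0,\vartheta}(\overline\Omega)$ gives $\|z_\ell-u_\varepsilon\|_{L^\infty(\Omega)}\to 0$ and $\sup_\ell\|z_\ell\|_{H^2(\Omega)}<\infty$.

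It then remains to estimate $\|z_\ell-u^{(\varepsilon)}_\ell\|_{L^\infty(\Omega)}$ for the Galerkin projection on the convex polygon $\Omega$. Standard finite element theory ($H^1$ and, via Aubin--Nitsche duality, $L^2$ error bounds of order $h_\ell\|z_\ell\|_{H^2(\Omega)}$ and $h_\ell^2\|z_\ell\|_{H^2(\Omega)}$, combined with an inverse estimate and the $L^\infty$ interpolation error of an $H^2$ function in two dimensions) gives $\|z_\ell-u^{(\varepsilon)}_\ell\|_{L^\infty(\Omega)}\lesssim h_\ell^\vartheta\|z_\ell\|_{H^2(\Omega)}\to 0$. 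Adding the two limits shows $\|u_\varepsilon-u^{(\varepsilon)}_\ell\|_{L^\infty(\Omega)}\to 0$ for this fixed $\varepsilon$, which produces the required $\ell_0(\varepsilon)$ and, with the first step, proves \eqref{ineq:convergence-FEM}. Apart from the $H^1$-to-$L^\infty$ upgrade just described, the only points needing care are the extension of the monotonicity to arbitrary parameter pairs and the identity that $\div$ of the gradient variable equals $\Delta u_\varepsilon$; both are routine.
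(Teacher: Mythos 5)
Your proposal is correct and follows essentially the same strategy as the paper: it interposes $u_\varepsilon$ and the auxiliary Poisson solution ($z_\ell$ is exactly the paper's $v_\ell^{(\varepsilon)}$ with $\Delta v_\ell^{(\varepsilon)}=\div w_\ell^{(\varepsilon)}$), and combines elliptic regularity on the convex domain, the Sobolev embedding $H^2(\Omega)\subset C^{0,\vartheta}(\overline\Omega)$, the a~priori bound \eqref{ineq:a-priori-1} with density of $W_\ell$ in $W$, and \Cref{thm:convergence-nonsmooth} (your explicit monotonicity argument in $\varepsilon$ is a clean way to pass from sequential to uniform smallness of $\|u-u_\varepsilon\|_{L^\infty(\Omega)}$, a point the paper leaves implicit). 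The only deviation is the last technical step, where you bound $\|z_\ell-u_\ell^{(\varepsilon)}\|_{L^\infty(\Omega)}$ by standard Lagrange interpolation, Aubin--Nitsche, and an inverse estimate instead of the Morley interpolation with conforming companion used in the paper; both yield the required $O(h_\ell)$-type decay.
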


\begin{proof}
	The first part of the proof establishes an upper bound for $\|u - u_\ell^{(\varepsilon)}\|_{L^\infty(\Omega)}$ for fixed $0 < \varepsilon \leq 1/2$ and $\ell \in \mathbb{N}$.
	Let $v_\ell^{(\varepsilon)} \in V \cap H^2(\Omega)$ denote the solution to the Poisson problem $\Delta v_\ell^{(\varepsilon)} = \div w_\ell^{(\varepsilon)}$ in $\Omega$. The composition of the interpolation $\I_\ell$ onto the Morley finite element space \cite[Section 2.2]{Gallistl2015} and the conforming companion $\mathrm{J}_\ell$ from \cite[Poposition 2.6]{Gallistl2015} leads to $\psi_\ell^{(\varepsilon)} \coloneqq \mathrm{J}_\ell \I_\ell v_\ell^{(\varepsilon)} \in P_6(\mathcal{T}_\ell) \cap V \cap H^2(\Omega)$ such that
	\begin{align}
		h_\ell^{-2}\|v_\ell^{(\varepsilon)} - \psi_\ell^{(\varepsilon)}\|_{L^2(\Omega)} + \|\D^2(v_\ell^{(\varepsilon)} - \psi_\ell^{(\varepsilon)})\|_{L^2(\Omega)} \lesssim \|(1 - \Pi_\ell)\D^2 v_\ell^{(\varepsilon)}\|_{L^2(\Omega)}
		\label{ineq:proof-convergence-FEM-best-approximation}
	\end{align}
	with the $L^2$ orthogonal projection $\Pi_\ell$ onto the space $P_0(\mathcal{T}_\ell;\M)$ of matrix-valued piecewise constants. The triangle inequality shows
	\begin{align}
		\|u - u_\ell^{(\varepsilon)}\|_{L^\infty(\Omega)} &\leq \|u - u_\varepsilon\|_{L^\infty(\Omega)} + \|u_\varepsilon - v_\ell^{(\varepsilon)}\|_{L^\infty(\Omega)}\nonumber\\
		&\qquad + \|v_\ell^{(\varepsilon)} - \psi_\ell^{(\varepsilon)}\|_{L^\infty(\Omega)} + \|\psi_\ell^{(\varepsilon)} - u_\ell^{(\varepsilon)}\|_{L^\infty(\Omega)}.
		\label{ineq:proof-convergence-FEM-split}
	\end{align}
	The Sobolev embedding $H^2(\Omega) \subset C^{0,\vartheta}(\overline{\Omega})$ for any $0 < \vartheta < 1$ \cite[Theorem 4.12 (II)]{AdamsFournier2003} and the Miranda-Talenti estimate \cite[Lemma 2]{SmearsSueli2013} yield
	\begin{align*}
		\|u_\varepsilon - v_\ell^{(\varepsilon)}\|_{L^\infty(\Omega)} \lesssim \|\D^2(u_\varepsilon - v_\ell^{(\varepsilon)})\|_{L^2(\Omega)} \leq \|\Delta(u_\varepsilon - v_\ell^{(\varepsilon)})\|_{L^2(\Omega)}.
	\end{align*}
	This, $\Delta (u_\varepsilon - v_\ell^{(\varepsilon)}) = \div (w_\varepsilon - w_\ell^{(\varepsilon)})$, and the a~priori estimate \eqref{ineq:a-priori-1} prove
	\begin{align}
		\|u_\varepsilon - v_\ell^{(\varepsilon)}\|_{L^\infty(\Omega)} \lesssim \|\D^2(u_\varepsilon - v_\ell^{(\varepsilon)})\|_{L^2(\Omega)} \lesssim \varepsilon^{-1} \min_{z_\ell \in W_\ell} \|\D(w_\varepsilon - z_\ell)\|_{L^2(\Omega)}.
		\label{ineq:proof-convergence-FEM-term-1}
	\end{align}
	The Sobolev embedding $H^2(\Omega) \subset C^{0,\vartheta}(\overline{\Omega})$ and \eqref{ineq:proof-convergence-FEM-best-approximation} verify
	\begin{align*}
		\|v_\ell^{(\varepsilon)} - \psi_\ell^{(\varepsilon)}\|_{L^\infty(\Omega)} \lesssim \|\D^2 (v_\ell^{(\varepsilon)} - \psi_\ell^{(\varepsilon)})\|_{L^2(\Omega)} \lesssim \|(1 - \Pi_\ell) \D^2 v_\ell^{(\varepsilon)}\|_{L^2(\Omega)}.
	\end{align*}
	This, the best-approximation property of $\Pi_\ell$, the triangle inequality, and \eqref{ineq:proof-convergence-FEM-term-1} show
	\begin{align}
		\|v_\ell^{(\varepsilon)} - \psi_\ell^{(\varepsilon)}\|_{L^\infty(\Omega)} &\lesssim  \|(1 - \Pi_\ell) \D^2 v_\ell^{(\varepsilon)}\|_{L^2(\Omega)} \leq \|\D^2 v_\ell^{(\varepsilon)} - \Pi_\ell \D^2 u_\varepsilon\|_{L^2(\Omega)}\nonumber\\
		& \lesssim \|(1 - \Pi_\ell) \D^2 u_\varepsilon\|_{L^2(\Omega)} + \varepsilon^{-1}\min_{z_\ell \in W_\ell} \|\D(w_\varepsilon - z_\ell)\|_{L^2(\Omega)}.
		\label{ineq:proof-convergence-FEM-term-2}
	\end{align}
	The function $\psi_\ell^{(\varepsilon)} - u_\ell^{(\varepsilon)}$ is discrete and so, a scaling argument leads to 
        $\|\psi_\ell^{(\varepsilon)} - u_\ell^{(\varepsilon)}\|_{L^\infty(\Omega)} \lesssim h_\ell^{-1}\|\psi_\ell^{(\varepsilon)} - u_\ell^{(\varepsilon)}\|_{L^2(\Omega)}$. 
        Thus, the triangle inequality and \eqref{ineq:proof-convergence-FEM-best-approximation} confirm
	\begin{align}
		&\|\psi_\ell^{(\varepsilon)} - u_\ell^{(\varepsilon)}\|_{L^\infty(\Omega)}\leq h_\ell\|(1 - \Pi_\ell) \D^2 v_\ell^{(\varepsilon)}\|_{L^2(\Omega)} + h_\ell^{-1}\|v_\ell^{(\varepsilon)} - u_\ell^{(\varepsilon)}\|_{L^2(\Omega)}.
		\label{ineq:proof-convergence-FEM-term-3}
	\end{align}
	Notice that $u_\ell^{(\varepsilon)} \in S^k_0(\mathcal{T}_\ell)$ is the finite element approximation to the Poisson problem $\Delta v_\ell^{(\varepsilon)} = \div w_\ell^{(\varepsilon)}$ in the convex domain $\Omega$ and so, standard a~priori analysis proves $\|v_\ell^{(\varepsilon)} - u_\ell^{(\varepsilon)}\|_{L^2(\Omega)} \lesssim h_\ell^2\|\div w_\ell^{(\varepsilon)}\|_{L^2(\Omega)}$. Hence, the triangle inequality and \eqref{ineq:a-priori-1} show $\|v_\ell^{(\varepsilon)} - u_\ell^{(\varepsilon)}\|_{L^2(\Omega)} \lesssim \varepsilon^{-1} h_\ell^2\min_{z_\ell \in W_\ell} \|\D(w_\varepsilon - z_\ell)\|_{L^2(\Omega)} + h_\ell^2\|\div w_\varepsilon\|$.
	The combination of this with \eqref{ineq:proof-convergence-FEM-split}--\eqref{ineq:proof-convergence-FEM-term-3} and $h_\ell \lesssim 1$ result in
	\begin{align}
		\|u - u_\ell^{(\varepsilon)}\|_{L^\infty(\Omega)} &\lesssim \|u - u_\varepsilon\|_{L^\infty(\Omega)} + \|(1 - \Pi_\ell)\D^2 u_\varepsilon\|_{L^2(\Omega)}\nonumber\\
		&\qquad + \varepsilon^{-1}\min_{z_\ell \in W_\ell} \|\D(w_\varepsilon - z_\ell)\|_{L^2(\Omega)} + h_\ell\|\div w_\varepsilon\|_{L^2(\Omega)}.
		\label{ineq:proof-convergence-FEM-upper-bpund}
	\end{align}
	Given $nu > 0$, \eqref{ineq:convergence-FEM} follows immediately from \eqref{ineq:proof-convergence-FEM-upper-bpund}, the convergence result from \Cref{thm:convergence-nonsmooth}, 
	and the density of $P_0(\mathcal{T}_\ell;\M)$ in $L^2(\Omega;\M)$ and of $W_\ell$ in $W$ as $\ell \to \infty$.
\end{proof}

\section{Numerical experiments}\label{sec:numerical-experiments}

In this section we present four numerical examples.

\subsection{Setup}

In all examples, the domain is the unit square
$\Omega=(0,1)^2$.
The variables $u$ and $w$ are both discretized with first-order
Lagrange finite elements
on a sequence of uniformly refined triangular meshes
with mesh-sizes varying from $h=2^0$
to $h=2^{-10}$;
the mesh size $h$ refers to the maximal diameter of the 
elements in the triangulation.
The coarsest mesh consists of the four triangles
that emerge from connecting the corners of the square
with the midpoint $(1/2,1/2)$ by straight lines.
The discrete nonlinear problem is solved with
the semismooth Newton method proposed in \cite{SmearsSueli2014}.
The regularization parameters are chosen as
$\varepsilon=10^{-1},10^{-2},10^{-3}$.
The diagrams display the errors between $u$ and $u_h$
in the global
$L^\infty$ norm, the $L^2$ norm, and the $H^1$ seminorm
($L^2$ norm of the gradient) versus
to $1/h$.

\subsection{First experiment}

In this example from \cite{DeanGlowinski2006},
the exact solution is given by
$$
 u(x) = \frac{(2|x|)^{3/2}}{3}
$$
with $f(x) = 1/|x|$.
The solution belongs to $H^{5/2-\nu}(\Omega)$
for any $\nu>0$, but not to $C^2(\overline{\Omega})$.
The convergence history is displayed in
Figure~\ref{f:conv1}. 
The $H^1$ error converges at order $h$, the $L^2$ error
at order $h^2$, and the $L^\infty$ error at an order
between $h$ and $h^2$. The asymptotic behaviour starts
from the first refinement.
The error curves show no differences
with respect to the regularization parameter $\varepsilon$.
This observation can be explained as follows.
In this case with $u\in C^2(\Omega)$, it can be explicitly computed
that
\begin{align}\label{ineq:effect-of-regularization-pointwise}
	|D^2 u(x)|^2
	\leq 
	\frac{(1-\varepsilon)^2+\varepsilon^2}{\varepsilon(1-\varepsilon)} f(x)
	\qquad\text{for any }x\in\Omega
\end{align}
holds for any $0<\varepsilon\leq 1/3$.
We apply \Cref{lem:regularization} pointwise and conclude that 
the Monge--Amp\`ere solution $u$ is the viscosity solution
to the regularized problem \eqref{pr:HJB-regularized}
for all $0<\varepsilon\leq 1/3$.
Therefore, we expect that our choices of
$\varepsilon=10^{-1},10^{-2},10^{-3}$
do not lead to plateaus in the convergence history.

\begin{filecontents}{convhist_ex1_epsi0.1.dat}
	 ndof	 hinv	 Linferr	 L2err	 H1err	 H2err	 eta	 niter
   1.2000000000000000e+01   1.0000000000000000e+00   9.0257564728146278e-02   1.3360812496716848e-01   3.0495628067322123e-01   8.1143001087970468e-01   7.7342742065900294e-01   7.0000000000000000e+00
   3.6000000000000000e+01   2.0000000000000000e+00   3.7403151097256448e-02   3.6863452993171904e-02   1.7946206866565806e-01   6.4583757556961552e-01   5.3578770095692796e-01   6.0000000000000000e+00
   1.3200000000000000e+02   4.0000000000000000e+00   1.2744787235496768e-02   9.9390841634556942e-03   9.5017273594435403e-02   4.6727428730216641e-01   3.7387136461604403e-01   6.0000000000000000e+00
   5.1600000000000000e+02   8.0000000000000000e+00   3.8659504849453130e-03   2.5538551751157403e-03   4.8532032179224471e-02   3.3342351568177042e-01   2.6191000795298741e-01   6.0000000000000000e+00
   2.0520000000000000e+03   1.6000000000000000e+01   1.1302147878177182e-03   6.3995422540519221e-04   2.4468868117540683e-02   2.3670757051675126e-01   1.8421544078544680e-01   6.0000000000000000e+00
   8.1960000000000000e+03   3.2000000000000000e+01   3.2451187672033477e-04   1.5886797953257090e-04   1.2276705718910111e-02   1.6768900334983938e-01   1.2989765984215171e-01   6.0000000000000000e+00
   3.2772000000000000e+04   6.4000000000000000e+01   9.1877393049943912e-05   3.9358386028925959e-05   6.1476171204939535e-03   1.1868098257723275e-01   9.1721613528636711e-02   6.0000000000000000e+00
   1.3107600000000000e+05   1.2800000000000000e+02   2.8105542945702994e-05   9.7595689700739591e-06   3.0759288257305229e-03   8.3957468292487514e-02   6.4810815636087316e-02   6.0000000000000000e+00
   5.2429200000000000e+05   2.5600000000000000e+02   9.9366033795780123e-06   2.4243240776744248e-06   1.5384657706423311e-03   5.9380020324700460e-02   4.5811815602716514e-02   6.0000000000000000e+00
   2.0971560000000000e+06   5.1200000000000000e+02   3.5131111184803483e-06   6.0326532199485470e-07   7.6935409755259608e-04   4.1992637531054380e-02   3.2388059994962443e-02   6.0000000000000000e+00
   8.3886120000000000e+06   1.0240000000000000e+03   1.2420722449690938e-06   1.5033555073281202e-07   3.8470676789645661e-04   2.9694903991225971e-02   2.2899771179279743e-02   6.0000000000000000e+00
\end{filecontents}

\begin{filecontents}{convhist_ex1_epsi0.01.dat}
	 ndof	 hinv	 Linferr	 L2err	 H1err	 H2err	 eta	 niter
   1.2000000000000000e+01   1.0000000000000000e+00   9.0257564728146278e-02   1.3360812496716848e-01   3.0495628067322123e-01   8.1104810389235549e-01   7.7301534177707332e-01   7.0000000000000000e+00
   3.6000000000000000e+01   2.0000000000000000e+00   3.7458018921807779e-02   3.6880921455309516e-02   1.7946684176932756e-01   6.4650581288983555e-01   5.3640914080618907e-01   6.0000000000000000e+00
   1.3200000000000000e+02   4.0000000000000000e+00   1.2771591138219796e-02   9.9434550891130610e-03   9.5018707214307607e-02   4.6766883207448828e-01   3.7428210854792998e-01   6.0000000000000000e+00
   5.1600000000000000e+02   8.0000000000000000e+00   3.8677416668005948e-03   2.5524188017714282e-03   4.8532176247479755e-02   3.3369728502864221e-01   2.6218861110308095e-01   6.0000000000000000e+00
   2.0520000000000000e+03   1.6000000000000000e+01   1.1288959604762727e-03   6.3858875615480147e-04   2.4468837745486163e-02   2.3690090077643081e-01   1.8441189413039794e-01   6.0000000000000000e+00
   8.1960000000000000e+03   3.2000000000000000e+01   3.2384373048188753e-04   1.5827220815027777e-04   1.2276686914084357e-02   1.6782543803919348e-01   1.3003673000293889e-01   6.0000000000000000e+00
   3.2772000000000000e+04   6.4000000000000000e+01   9.1651644689494383e-05   3.9156050090440933e-05   6.1476108462429502e-03   1.1877735343421965e-01   9.1820057101804339e-02   6.0000000000000000e+00
   1.3107600000000000e+05   1.2800000000000000e+02   2.8135201254509681e-05   9.6988466850679135e-06   3.0759270514938899e-03   8.4025578468487794e-02   6.4880470300532359e-02   6.0000000000000000e+00
   5.2429200000000000e+05   2.5600000000000000e+02   9.9470838460678974e-06   2.4073321338020621e-06   1.5384653009333849e-03   5.9428169909569242e-02   4.5861085521157571e-02   6.0000000000000000e+00
   2.0971560000000000e+06   5.1200000000000000e+02   3.5168160350988807e-06   5.9870601086353788e-07   7.6935397683566831e-04   4.2026680331846776e-02   3.2422904991479554e-02   6.0000000000000000e+00
   8.3886120000000000e+06   1.0240000000000000e+03   1.2433820615685208e-06   1.4914235358674526e-07   3.8470673730463853e-04   2.9718974321053623e-02   2.2924412295885498e-02   6.0000000000000000e+00
\end{filecontents}

\begin{filecontents}{convhist_ex1_epsi0.001.dat}
	 ndof	 hinv	 Linferr	 L2err	 H1err	 H2err	 eta	 niter
   1.2000000000000000e+01   1.0000000000000000e+00   9.0257564728146389e-02   1.3360812496716853e-01   3.0495628067322123e-01   8.1101460128785940e-01   7.7297922151764176e-01   7.0000000000000000e+00
   3.6000000000000000e+01   2.0000000000000000e+00   3.7463070428750855e-02   3.6882535132926213e-02   1.7946731512805883e-01   6.4656936207610882e-01   5.3646838217515358e-01   6.0000000000000000e+00
   1.3200000000000000e+02   4.0000000000000000e+00   1.2774035680914841e-02   9.9438906267025253e-03   9.5018869619564281e-02   4.6770691475530363e-01   3.7432112264530626e-01   6.0000000000000000e+00
   5.1600000000000000e+02   8.0000000000000000e+00   3.8678819912447704e-03   2.5522956999772211e-03   4.8532199443593112e-02   3.3372369524190959e-01   2.6221499058973574e-01   6.0000000000000000e+00
   2.0520000000000000e+03   1.6000000000000000e+01   1.1287618933988330e-03   6.3846311579435677e-04   2.4468837507054531e-02   2.3691954317148645e-01   1.8443048421646430e-01   6.0000000000000000e+00
   8.1960000000000000e+03   3.2000000000000000e+01   3.2377779093017978e-04   1.5821662697812405e-04   1.2276685823829752e-02   1.6783859317967986e-01   1.3004988888909041e-01   6.0000000000000000e+00
   3.2772000000000000e+04   6.4000000000000000e+01   9.1629533034542909e-05   3.9137067681153042e-05   6.1476104298600499e-03   1.1878664549221525e-01   9.1829371766553106e-02   6.0000000000000000e+00
   1.3107600000000000e+05   1.2800000000000000e+02   2.8138937568145768e-05   9.6931342120281846e-06   3.0759269287387115e-03   8.4032145630017485e-02   6.4887060964796239e-02   6.0000000000000000e+00
   5.2429200000000000e+05   2.5600000000000000e+02   9.9484043097382703e-06   2.4057312618458254e-06   1.5384652678881764e-03   5.9432812477693636e-02   4.5865747395890083e-02   6.0000000000000000e+00
   2.0971560000000000e+06   5.1200000000000000e+02   3.5172828417278496e-06   5.9827623322786745e-07   7.6935396827955481e-04   4.2029962728986750e-02   3.2426201992495060e-02   6.0000000000000000e+00
   8.3886120000000000e+06   1.0240000000000000e+03   1.2435470956489571e-06   1.4903034356107706e-07   3.8470673512903779e-04   2.9721295175880156e-02   2.2926743815002364e-02   6.0000000000000000e+00
\end{filecontents}

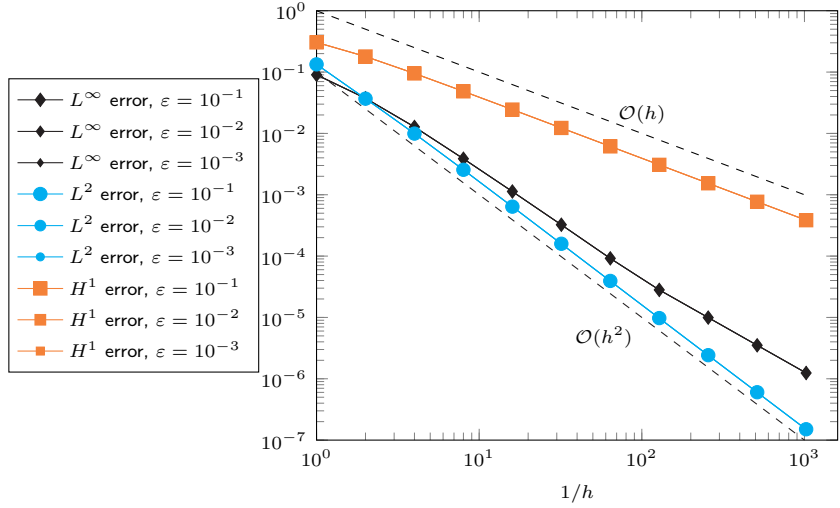
\begin{figure}
	\begin{tikzpicture}
		\begin{loglogaxis}[legend pos=south west,legend cell align=left,
			legend style={fill=none},
			ymin=1e-7,ymax=1e0,
			xmin=1e0,xmax=1.6e3,
			ytick={1e-7,1e-6,1e-5,1e-4,1e-3,1e-2,1e-1,1e0},
			xtick={1e0,1e1,1e2,1e3,1e4}]
			\pgfplotsset{
				cycle list={%
					{Black, mark=diamond*, mark size=2.5pt},
					{Black, mark=diamond*, mark size=2pt},
					{Black, mark=diamond*, mark size=1.5pt},
					{Cyan, mark=*, mark size=2.5pt},
					{Cyan, mark=*, mark size=2pt},
					{Cyan, mark=*, mark size=1.5pt},
					{Orange, mark=square*, mark size=2.5pt},
					{Orange, mark=square*, mark size=2pt},
					{Orange, mark=square*, mark size=1.5pt},
				},
				legend style={
					at={(-0.11,.5)}, anchor=east},
				font=\sffamily\scriptsize,
				xlabel=$1/h$,ylabel=
			}
			\addplot+ table[x=hinv,y=Linferr]{convhist_ex1_epsi0.1.dat};
			\addlegendentry{$L^\infty$ error, $\varepsilon=10^{-1}$}
			\addplot+ table[x=hinv,y=Linferr]{convhist_ex1_epsi0.01.dat};
			\addlegendentry{$L^\infty$ error, $\varepsilon=10^{-2}$}
			\addplot+ table[x=hinv,y=Linferr]{convhist_ex1_epsi0.001.dat};
			\addlegendentry{$L^\infty$ error, $\varepsilon=10^{-3}$}
			\addplot+ table[x=hinv,y=L2err]{convhist_ex1_epsi0.1.dat};
			\addlegendentry{$L^2$ error, $\varepsilon=10^{-1}$}
			\addplot+ table[x=hinv,y=L2err]{convhist_ex1_epsi0.01.dat};
			\addlegendentry{$L^2$ error, $\varepsilon=10^{-2}$}
			\addplot+ table[x=hinv,y=L2err]{convhist_ex1_epsi0.001.dat};
			\addlegendentry{$L^2$ error, $\varepsilon=10^{-3}$}
			\addplot+ table[x=hinv,y=H1err]{convhist_ex1_epsi0.1.dat};
			\addlegendentry{$H^1$ error, $\varepsilon=10^{-1}$}
			\addplot+ table[x=hinv,y=H1err]{convhist_ex1_epsi0.01.dat};
			\addlegendentry{$H^1$ error, $\varepsilon=10^{-2}$}
			\addplot+ table[x=hinv,y=H1err]{convhist_ex1_epsi0.001.dat};
			\addlegendentry{$H^1$ error, $\varepsilon=10^{-3}$}
			\addplot+ [dashed,mark=none] coordinates{(1,1) (1e3,1e-3)};
			\node(z) at  (axis cs:100,1e-2)
			[above] {$\mathcal O (h)$};
			\addplot+ [dashed,mark=none] coordinates{(1,.1) (1e3,1e-7)};
			\node(z) at  (axis cs:100,1e-5)
			[below left] {$\mathcal O (h^{2})$};
		\end{loglogaxis}
	\end{tikzpicture}
	\caption{Convergence history for the first experiment.
		\label{f:conv1}
	}
\end{figure}

\subsection{Second experiment}

In this example from \cite{DeanGlowinski2006},
the exact solution
$$
u(x)=-\sqrt{2-|x|^2};
$$
to \eqref{pr:Monge-Ampere} with the right-hand side $f(x) = 2/(2-|x|^2)^2$ belongs to $H^{3/2-\nu}(\Omega)$
for any $\nu>0$, but not to
$H^{3/2}(\Omega)$.
The convergence history from
Figure~\ref{f:conv2} shows convergence of order $h^{1/2}$
in the $H^1$ and the $L^\infty$ norms and
the order $h^{3/2}$ in the $L^2$ norm.
While any choice of the regularization parameter $\varepsilon$
seems sufficient for the mentioned rate in $H^1$ or $L^\infty$
in the range of refinements, the $L^2$ error deteriorates
for the last three refinements when $\varepsilon=0.1$
is chosen. 

For the second example, \eqref{ineq:effect-of-regularization-pointwise} holds only on the subdomain $\Omega' \coloneqq \{x \in \Omega: |x| \leq 4/3\}$ for the parameter $\varepsilon = 0.1$. Therefore, we expect (at least asymptotically) plateaus for all error curves.
The rates are observable starting from the first
refinement.

\begin{filecontents}{convhist_ex2_epsi0.1.dat}
	 ndof	 hinv	 Linferr	 L2err	 H1err	 H2err	 eta	 niter
   1.2000000000000000e+01   1.0000000000000000e+00   2.8785814746498262e-01   2.8115797625786770e-01   5.8380062512297271e-01   4.8590072129345732e+00   2.7471874923260753e+00   1.0000000000000000e+01
   3.6000000000000000e+01   2.0000000000000000e+00   1.0129008677868656e-01   7.9380749947231904e-02   4.0889352400070611e-01   7.1658238232999159e+00   3.1513477788192086e+00   1.2000000000000000e+01
   1.3200000000000000e+02   4.0000000000000000e+00   1.2044140937786241e-01   3.4128132487319537e-02   3.0001943492657568e-01   9.8009713242348528e+00   1.5634108746313322e+00   1.2000000000000000e+01
   5.1600000000000000e+02   8.0000000000000000e+00   8.5703541622069057e-02   1.1820683259346159e-02   2.0992157838147554e-01   1.3853880577672962e+01   1.6525101710707648e+00   1.3000000000000000e+01
   2.0520000000000000e+03   1.6000000000000000e+01   6.1274886172339393e-02   4.1327684521362956e-03   1.4776938968921421e-01   1.9577128636194043e+01   1.9230189588456077e+00   1.2000000000000000e+01
   8.1960000000000000e+03   3.2000000000000000e+01   4.3945218629398985e-02   1.4564066961417815e-03   1.0450145441996943e-01   2.7670595370786963e+01   2.3306302091746396e+00   1.0000000000000000e+01
   3.2772000000000000e+04   6.4000000000000000e+01   3.1496562581737414e-02   5.1949572559751396e-04   7.4052200618157774e-02   3.9113911670094723e+01   2.9328168154085588e+00   8.0000000000000000e+00
   1.3107600000000000e+05   1.2800000000000000e+02   2.2526051633923780e-02   1.8848679807629900e-04   5.2514308721186197e-02   5.5295004758115503e+01   3.8081963203341673e+00   7.0000000000000000e+00
   5.2429200000000000e+05   2.5600000000000000e+02   1.6070703939621331e-02   7.1483245935166390e-05   3.7264289981377006e-02   7.8177420827014728e+01   5.0668121748815746e+00   6.0000000000000000e+00
   2.0971560000000000e+06   5.1200000000000000e+02   1.1440366662027586e-02   3.2652122561640699e-05   2.6481974728218484e-02   1.1053804418357764e+02   6.8635030855102679e+00   6.0000000000000000e+00
   8.3886120000000000e+06   1.0240000000000000e+03   8.1297986502701716e-03   2.2732148626797097e-05   1.8882923273944481e-02   1.5630305951116716e+02   9.4181277312169591e+00   6.0000000000000000e+00
\end{filecontents}
\begin{filecontents}{convhist_ex2_epsi0.01.dat}
	 ndof	 hinv	 Linferr	 L2err	 H1err	 H2err	 eta	 niter
   1.2000000000000000e+01   1.0000000000000000e+00   2.8785814746498262e-01   2.8115797625786770e-01   5.8380062512297271e-01   4.8565642737683206e+00   2.7562626465914652e+00   1.0000000000000000e+01
   3.6000000000000000e+01   2.0000000000000000e+00   8.4292028410254960e-02   7.5159440523947518e-02   4.1395627611889857e-01   7.3079868959121042e+00   3.2191323352347454e+00   1.3000000000000000e+01
   1.3200000000000000e+02   4.0000000000000000e+00   1.1952984232917385e-01   3.3973888966351617e-02   2.9940314469824497e-01   9.7947118567070603e+00   1.5712003321780339e+00   1.3000000000000000e+01
   5.1600000000000000e+02   8.0000000000000000e+00   8.7761693415375264e-02   1.1958548611255778e-02   2.1141750661767686e-01   1.3884529378792321e+01   1.2695716641233949e+00   1.1000000000000000e+01
   2.0520000000000000e+03   1.6000000000000000e+01   6.3017377368380423e-02   4.1606011919379012e-03   1.4910592908847489e-01   1.9695601010827446e+01   9.6129170423837473e-01   1.2000000000000000e+01
   8.1960000000000000e+03   3.2000000000000000e+01   4.4722768329821500e-02   1.4506523608513475e-03   1.0512871189059020e-01   2.7917499120699436e+01   7.0004439973864518e-01   1.0000000000000000e+01
   3.2772000000000000e+04   6.4000000000000000e+01   3.1629362930579163e-02   5.0863296616052774e-04   7.4176922505283488e-02   3.9527238286017251e+01   5.4603508326407113e-01   8.0000000000000000e+00
   1.3107600000000000e+05   1.2800000000000000e+02   2.2357250118031260e-02   1.7912748392589809e-04   5.2380699085141258e-02   5.5903830092113694e+01   6.2771104483058759e-01   8.0000000000000000e+00
   5.2429200000000000e+05   2.5600000000000000e+02   1.5815170547837862e-02   6.3214951331962190e-05   3.7019755113283530e-02   7.9064181105278521e+01   7.4950136990592764e-01   8.0000000000000000e+00
   2.0971560000000000e+06   5.1200000000000000e+02   1.1192763456462224e-02   2.2345267955985498e-05   2.6175165022340907e-02   1.1181417891687884e+02   9.3519572959454156e-01   7.0000000000000000e+00
   8.3886120000000000e+06   1.0240000000000000e+03   7.9214408639391373e-03   7.9100138495537624e-06   1.8509996979709200e-02   1.5812787211097057e+02   1.2071395945083887e+00   7.0000000000000000e+00
\end{filecontents}
\begin{filecontents}{convhist_ex2_epsi0.001.dat}
	 ndof	 hinv	 Linferr	 L2err	 H1err	 H2err	 eta	 niter
   1.2000000000000000e+01   1.0000000000000000e+00   2.8785814746498262e-01   2.8115797625786770e-01   5.8380062512297271e-01   4.8563676326669780e+00   2.7571526610833761e+00   1.0000000000000000e+01
   3.6000000000000000e+01   2.0000000000000000e+00   8.3386345793045602e-02   7.4951662304933928e-02   4.1430609058282558e-01   7.3159255040345670e+00   3.2233580630425855e+00   1.4000000000000000e+01
   1.3200000000000000e+02   4.0000000000000000e+00   1.1944254472401594e-01   3.3958866212174220e-02   2.9934480322461465e-01   9.7941576642530350e+00   1.5719708504580565e+00   1.3000000000000000e+01
   5.1600000000000000e+02   8.0000000000000000e+00   8.7736471870794808e-02   1.1957414279765521e-02   2.1139845560227369e-01   1.3883693873601992e+01   1.2706417708746254e+00   1.1000000000000000e+01
   2.0520000000000000e+03   1.6000000000000000e+01   6.3013216122959015e-02   4.1606981097628481e-03   1.4910247288325881e-01   1.9694715054396461e+01   9.6233307026874526e-01   1.2000000000000000e+01
   8.1960000000000000e+03   3.2000000000000000e+01   4.4722727638508653e-02   1.4506957440839662e-03   1.0512856097782373e-01   2.7916745254728141e+01   7.0084471468670184e-01   1.0000000000000000e+01
   3.2772000000000000e+04   6.4000000000000000e+01   3.1631881996088268e-02   5.0853942302879170e-04   7.4179645937326288e-02   3.9534399276892373e+01   5.0196258410653960e-01   9.0000000000000000e+00
   1.3107600000000000e+05   1.2800000000000000e+02   2.2359402965577257e-02   1.7895738209804338e-04   5.2384864934357976e-02   5.5951582434030712e+01   3.5708932642227487e-01   8.0000000000000000e+00
   5.2429200000000000e+05   2.5600000000000000e+02   1.5805715887399388e-02   6.3116211024368834e-05   3.7014544265283861e-02   7.9157999866828391e+01   2.5326274920389830e-01   8.0000000000000000e+00
   2.0971560000000000e+06   5.1200000000000000e+02   1.1174261421893469e-02   2.2286890126827484e-05   2.6162809227789553e-02   1.1196832150811365e+02   1.7936084796439974e-01   7.0000000000000000e+00
   8.3886120000000000e+06   1.0240000000000000e+03   7.9007483523251265e-03   7.8752944824015969e-06   1.8496011970667914e-02   1.5835102828016215e+02   1.9169315616144272e-01   8.0000000000000000e+00
\end{filecontents}

\begin{figure}
   \begin{tikzpicture}
\begin{loglogaxis}[legend pos=south west,legend cell align=left,
                   legend style={fill=none},
                   ymin=1e-6,ymax=1e0,
                   xmin=1e0,xmax=1.6e3,
                   ytick={1e-6,1e-5,1e-4,1e-3,1e-2,1e-1,1e0},
                   xtick={1e0,1e1,1e2,1e3}]
\pgfplotsset{
cycle list={%
{Black, mark=diamond*, mark size=2.5pt},
{Black, mark=diamond*, mark size=2pt},
{Black, mark=diamond*, mark size=1.5pt},
{Cyan, mark=*, mark size=2.5pt},
{Cyan, mark=*, mark size=2pt},
{Cyan, mark=*, mark size=1.5pt},
{Orange, mark=square*, mark size=2.5pt},
{Orange, mark=square*, mark size=2pt},
{Orange, mark=square*, mark size=1.5pt},
},
legend style={
at={(-0.11,.5)}, anchor=east},
font=\sffamily\scriptsize,
xlabel=$1/h$,ylabel=
}
\addplot+ table[x=hinv,y=Linferr]{convhist_ex2_epsi0.1.dat};
\addlegendentry{$L^\infty$ error, $\varepsilon=10^{-1}$}
\addplot+ table[x=hinv,y=Linferr]{convhist_ex2_epsi0.01.dat};
\addlegendentry{$L^\infty$ error, $\varepsilon=10^{-2}$}
\addplot+ table[x=hinv,y=Linferr]{convhist_ex2_epsi0.001.dat};
 \addlegendentry{$L^\infty$ error, $\varepsilon=10^{-3}$}
\addplot+ table[x=hinv,y=L2err]{convhist_ex2_epsi0.1.dat};
\addlegendentry{$L^2$ error, $\varepsilon=10^{-1}$}
\addplot+ table[x=hinv,y=L2err]{convhist_ex2_epsi0.01.dat};
\addlegendentry{$L^2$ error, $\varepsilon=10^{-2}$}
\addplot+ table[x=hinv,y=L2err]{convhist_ex2_epsi0.001.dat};
 \addlegendentry{$L^2$ error, $\varepsilon=10^{-3}$}
\addplot+ table[x=hinv,y=H1err]{convhist_ex2_epsi0.1.dat};
\addlegendentry{$H^1$ error, $\varepsilon=10^{-1}$}
\addplot+ table[x=hinv,y=H1err]{convhist_ex2_epsi0.01.dat};
\addlegendentry{$H^1$ error, $\varepsilon=10^{-2}$}
\addplot+ table[x=hinv,y=H1err]{convhist_ex2_epsi0.001.dat};
 \addlegendentry{$H^1$ error, $\varepsilon=10^{-3}$}
\addplot+ [dashed,mark=none] coordinates{(1,1) (1e4,1e-2)};
 \node(z) at  (axis cs:100,1e-1)
          [above] {$\mathcal O (h^{1/2})$};
\addplot+ [dashed,mark=none] coordinates{(1,1e-1) (1e4,1e-7)};
\node(z) at  (axis cs:10,1e-3)
          [below left] {$\mathcal O (h^{3/2})$};
\end{loglogaxis}
\end{tikzpicture}
   \caption{Convergence history for the second experiment.
            \label{f:conv2}
           }
\end{figure}
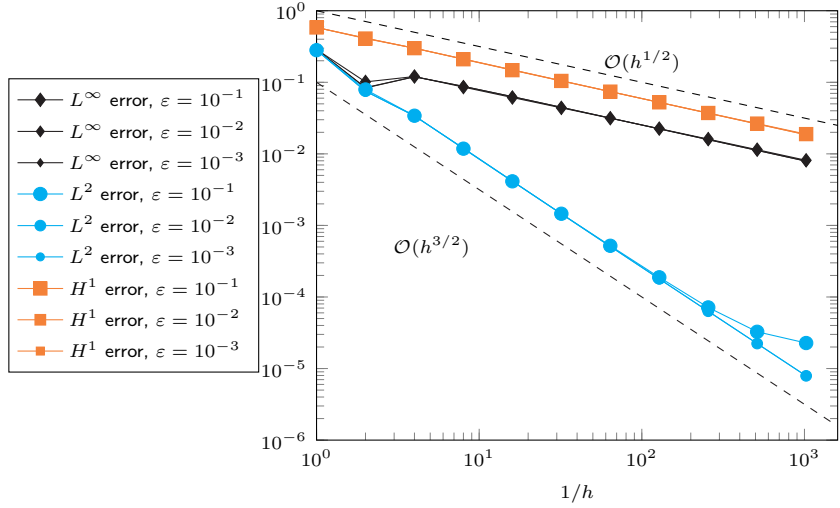

\subsection{Third experiment}

In this example, the exact solution is given by the
absolute value function
$$
u(x)=|x_1-1/2|;
$$
with $f(x) =0$.
The solution belongs to
$H^{3/2-\nu}(\Omega)$ for any $\nu>0$,
but not to $H^{3/2}(\Omega)$.
This example is not covered by
\Cref{cor:convergence-FEM}
because neither $f\in L^1_\uparrow(\Omega)$
nor $g\in H^2(\Omega)$.
The convergence history is displayed in
Figure~\ref{f:conv3}.
In this example, the effect of the regularization is
significant in the sense that the difference $u-u_\varepsilon$
dominates the discretization error on fine meshes,
which results in the error plateaus for small $h$.
For moderate choice of $h$ and $\varepsilon=0.001$,
the errors are of order $h^{1/4}$ in the 
$H^1$ and $L^\infty$ norms and of order
$h^{3/4}$ in the $L^2$ norm.
For any of the choice of $\varepsilon$, the error curve
shows flattening for fine meshes.
This provides a means of estimating the order of
convergence for the regularization error
$\|u-u_\varepsilon\|_{L^\infty(\Omega)}$.
Table~\ref{t:epsconv_ex3} shows the $L^\infty$
errors on the finest mesh for regularization
parameters $\varepsilon=2^{-j}$ for
$j\in\{1,\dots,11\}$ and the empirical convergence rate
with respect to $\varepsilon$.
For comparison, the error with respect to the penultimate
mesh is displayed as well. On meshes up to 
$h^{-9}$ the error stagnates with respect to mesh refinement;
this is the range where the empirical $\varepsilon$-rate
is meaningful. We observe values between $0.42$ and
$0.53$ for $\varepsilon$ between $2^{-4}$ and
$2^{-9}$.

It is worth mentioning that in this case
the exact solution belongs to the finite element space $V_h$
but does not coincide with the computed approximation
$u_h\in V_h$.
This is not only caused by the $\varepsilon$-regularization,
but also by the fact that the gradient $\nabla u$
is discontinuous, which results in boundary data
incompatible with the $H^1(\Omega)$ conforming finite element
space $W_h$.
This additional regularization is visible in the graph
of the computational solution in
Figure~\ref{f:surf3}.

\begin{filecontents}{convhist_ex3_epsi0.1.dat}
	 ndof	 hinv	 Linferr	 L2err	 H1err	 H2err	 eta	 niter
   1.2000000000000000e+01   1.0000000000000000e+00   3.3333333333333337e-01   2.3125906333114243e-01   8.8191710368819654e-01   1.9999999999999996e+00   6.1824123303304690e-01   2.0000000000000000e+00
   3.6000000000000000e+01   2.0000000000000000e+00   2.1208232183877102e-01   7.5659769076517669e-02   6.8455291893642967e-01   1.8327962935196291e+00   5.7113213550996100e-01   2.0000000000000000e+00
   1.3200000000000000e+02   4.0000000000000000e+00   1.7907212717053833e-01   5.5316846745077919e-02   5.1835487182604756e-01   2.5471464665843122e+00   1.0721440012337387e+00   2.0000000000000000e+00
   5.1600000000000000e+02   8.0000000000000000e+00   1.5517170120655821e-01   4.4849007499512900e-02   4.1983722465551743e-01   3.0999097607967077e+00   1.2243352893257471e+00   2.0000000000000000e+00
   2.0520000000000000e+03   1.6000000000000000e+01   1.4330242168191948e-01   4.0549099762476573e-02   3.6674883756721027e-01   3.6242631652871360e+00   1.0875162909877030e+00   2.0000000000000000e+00
   8.1960000000000000e+03   3.2000000000000000e+01   1.3642512831878675e-01   3.7654825481895790e-02   3.4123107137223757e-01   4.0725530322889769e+00   1.2448319671901813e+00   2.0000000000000000e+00
   3.2772000000000000e+04   6.4000000000000000e+01   1.3324529912012043e-01   3.6298248520305269e-02   3.2946380436271722e-01   4.4751498704327171e+00   1.3804405863844951e+00   2.0000000000000000e+00
   1.3107600000000000e+05   1.2800000000000000e+02   1.3186185082800234e-01   3.5684781886073436e-02   3.2441041816771998e-01   4.8421266062559480e+00   1.5043501756203059e+00   2.0000000000000000e+00
   5.2429200000000000e+05   2.5600000000000000e+02   1.3124526906364023e-01   3.5399814891049257e-02   3.2229435754617863e-01   5.1819191186188247e+00   1.6168088782212795e+00   2.0000000000000000e+00
   2.0971560000000000e+06   5.1200000000000000e+02   1.3095835379974152e-01   3.5262950415561087e-02   3.2139474545818153e-01   5.5001769311085322e+00   1.7208389595831119e+00   2.0000000000000000e+00
   8.3886120000000000e+06   1.0240000000000000e+03   1.3082044441879589e-01   3.5195813288896677e-02   3.2099851016232150e-01   5.8007367487759449e+00   1.8183453436500476e+00   2.0000000000000000e+00
\end{filecontents}
\begin{filecontents}{convhist_ex3_epsi0.01.dat}
	 ndof	 hinv	 Linferr	 L2err	 H1err	 H2err	 eta	 niter
   1.2000000000000000e+01   1.0000000000000000e+00   3.3333333333333337e-01   2.3125906333114243e-01   8.8191710368819654e-01   1.9999999999999996e+00   4.7309853331227125e-01   2.0000000000000000e+00
   3.6000000000000000e+01   2.0000000000000000e+00   1.9155329477561017e-01   6.8704240376952261e-02   6.7242103218754901e-01   1.9051016159354026e+00   5.5921907486026956e-01   2.0000000000000000e+00
   1.3200000000000000e+02   4.0000000000000000e+00   1.4115433673454714e-01   4.0593863580574450e-02   4.8845747512960513e-01   2.6905876142332983e+00   1.0516461775034314e+00   2.0000000000000000e+00
   5.1600000000000000e+02   8.0000000000000000e+00   1.0332233397947607e-01   2.5377105923178059e-02   3.6820150500097143e-01   3.4021380314036707e+00   1.2114242521686360e+00   2.0000000000000000e+00
   2.0520000000000000e+03   1.6000000000000000e+01   7.8405648322112984e-02   1.7021398868143085e-02   2.8901421219944834e-01   4.1621030933898231e+00   1.3415362780195721e+00   2.0000000000000000e+00
   8.1960000000000000e+03   3.2000000000000000e+01   6.2100926250563925e-02   1.2127395996179232e-02   2.3736440188591226e-01   4.9521447175312954e+00   1.4237122166870766e+00   2.0000000000000000e+00
   3.2772000000000000e+04   6.4000000000000000e+01   5.1885171861115610e-02   9.3099103208026997e-03   2.0423296288576284e-01   5.7523686939162877e+00   1.4682836591919717e+00   2.0000000000000000e+00
   1.3107600000000000e+05   1.2800000000000000e+02   4.6040733994495195e-02   7.7763100122469429e-03   1.8423006671552114e-01   6.5231299136551151e+00   1.5013787661290861e+00   2.0000000000000000e+00
   5.2429200000000000e+05   2.5600000000000000e+02   4.3005530137814128e-02   6.9828476207954128e-03   1.7332420394868683e-01   7.2350185624308843e+00   1.5260955276207837e+00   2.0000000000000000e+00
   2.0971560000000000e+06   5.1200000000000000e+02   4.1533026374299516e-02   6.5851904382568912e-03   1.6799076518358524e-01   7.8847163857586038e+00   1.5497494022831746e+00   2.0000000000000000e+00
   8.3886120000000000e+06   1.0240000000000000e+03   4.0833912130308347e-02   6.3881709053474206e-03   1.6557189074593531e-01   8.4829222122338841e+00   1.5718135756207250e+00   2.0000000000000000e+00
\end{filecontents}
\begin{filecontents}{convhist_ex3_epsi0.001.dat}
	 ndof	 hinv	 Linferr	 L2err	 H1err	 H2err	 eta	 niter
   1.2000000000000000e+01   1.0000000000000000e+00   3.3333333333333337e-01   2.3125906333114243e-01   8.8191710368819654e-01   1.9999999999999996e+00   4.7142149104832104e-01   2.0000000000000000e+00
   3.6000000000000000e+01   2.0000000000000000e+00   1.8945214194787857e-01   6.8040954729826034e-02   6.7134606043747103e-01   1.9145077793267340e+00   5.6325896628815020e-01   2.0000000000000000e+00
   1.3200000000000000e+02   4.0000000000000000e+00   1.3715272175739174e-01   3.9213745460098301e-02   4.8590994899983053e-01   2.7133933591560266e+00   1.0513103415980056e+00   2.0000000000000000e+00
   5.1600000000000000e+02   8.0000000000000000e+00   9.7420958953418763e-02   2.3489085073835941e-02   3.6343742710369198e-01   3.4557534982446421e+00   1.2133638118837566e+00   2.0000000000000000e+00
   2.0520000000000000e+03   1.6000000000000000e+01   7.0152939837754835e-02   1.4578959362864843e-02   2.8102187391974026e-01   4.2745637273645531e+00   1.3426096727581616e+00   2.0000000000000000e+00
   8.1960000000000000e+03   3.2000000000000000e+01   5.1033265272681615e-02   9.1813854242196874e-03   2.2395122909067022e-01   5.1951296881682731e+00   1.4176643663127548e+00   2.0000000000000000e+00
   3.2772000000000000e+04   6.4000000000000000e+01   3.7480326200228530e-02   5.8474633561788215e-03   1.8308257963401545e-01   6.2408648937650240e+00   1.4618704903686601e+00   2.0000000000000000e+00
   1.3107600000000000e+05   1.2800000000000000e+02   2.8044073097375197e-02   3.8177848520818901e-03   1.5294847791323538e-01   7.4228086508267923e+00   1.4871689326725628e+00   2.0000000000000000e+00
   5.2429200000000000e+05   2.5600000000000000e+02   2.1688869749028852e-02   2.6096081171455470e-03   1.3056571826857674e-01   8.7260894446634776e+00   1.5002704092929613e+00   2.0000000000000000e+00
   2.0971560000000000e+06   5.1200000000000000e+02   1.7634099086532115e-02   1.9137128046149180e-03   1.1439356115334708e-01   1.0099648083595868e+01   1.5083252167381509e+00   2.0000000000000000e+00
   8.3886120000000000e+06   1.0240000000000000e+03   1.5244127047962538e-02   1.5302803066308344e-03   1.0353744806647658e-01   1.1462045700157965e+01   1.5130715035611990e+00   2.0000000000000000e+00
\end{filecontents}

\begin{figure}
   \begin{tikzpicture}
\begin{loglogaxis}[legend pos=south west,legend cell align=left,
                   legend style={fill=none},
                   ymin=1e-3,ymax=1e0,
                   xmin=1e0,xmax=1.6e3,
                   ytick={1e-6,1e-5,1e-4,1e-3,1e-2,1e-1,1e0},
                   xtick={1e0,1e1,1e2,1e3}]
\pgfplotsset{
cycle list={%
{Black, mark=diamond*, mark size=2.5pt},
{Black, mark=diamond*, mark size=2pt},
{Black, mark=diamond*, mark size=1.5pt},
{Cyan, mark=*, mark size=2.5pt},
{Cyan, mark=*, mark size=2pt},
{Cyan, mark=*, mark size=1.5pt},
{Orange, mark=square*, mark size=2.5pt},
{Orange, mark=square*, mark size=2pt},
{Orange, mark=square*, mark size=1.5pt},
},
legend style={
at={(-0.11,.5)}, anchor=east},
font=\sffamily\scriptsize,
xlabel=$1/h$,ylabel=
}
\addplot+ table[x=hinv,y=Linferr]{convhist_ex3_epsi0.1.dat};
\addlegendentry{$L^\infty$ error, $\varepsilon=10^{-1}$}
\addplot+ table[x=hinv,y=Linferr]{convhist_ex3_epsi0.01.dat};
\addlegendentry{$L^\infty$ error, $\varepsilon=10^{-2}$}
\addplot+ table[x=hinv,y=Linferr]{convhist_ex3_epsi0.001.dat};
 \addlegendentry{$L^\infty$ error, $\varepsilon=10^{-3}$}
\addplot+ table[x=hinv,y=L2err]{convhist_ex3_epsi0.1.dat};
\addlegendentry{$L^2$ error, $\varepsilon=10^{-1}$}
\addplot+ table[x=hinv,y=L2err]{convhist_ex3_epsi0.01.dat};
\addlegendentry{$L^2$ error, $\varepsilon=10^{-2}$}
\addplot+ table[x=hinv,y=L2err]{convhist_ex3_epsi0.001.dat};
 \addlegendentry{$L^2$ error, $\varepsilon=10^{-3}$}
\addplot+ table[x=hinv,y=H1err]{convhist_ex3_epsi0.1.dat};
\addlegendentry{$H^1$ error, $\varepsilon=10^{-1}$}
\addplot+ table[x=hinv,y=H1err]{convhist_ex3_epsi0.01.dat};
\addlegendentry{$H^1$ error, $\varepsilon=10^{-2}$}
\addplot+ table[x=hinv,y=H1err]{convhist_ex3_epsi0.001.dat};
 \addlegendentry{$H^1$ error, $\varepsilon=10^{-3}$}
\addplot+ [dashed,mark=none] coordinates{(1,4e-1) (1e4,4e-2)};
\node(z) at  (axis cs:100,.1)
          [below] {$\mathcal O (h^{1/4})$};
\addplot+ [dashed,mark=none] coordinates{(1,8e-2) (1e4,8e-5)};
\node(z) at  (axis cs:50,4e-3)
          [below left] {$\mathcal O (h^{3/4})$};
\end{loglogaxis}
\end{tikzpicture}
   \caption{Convergence history for the third experiment.
            \label{f:conv3}
           }
\end{figure}
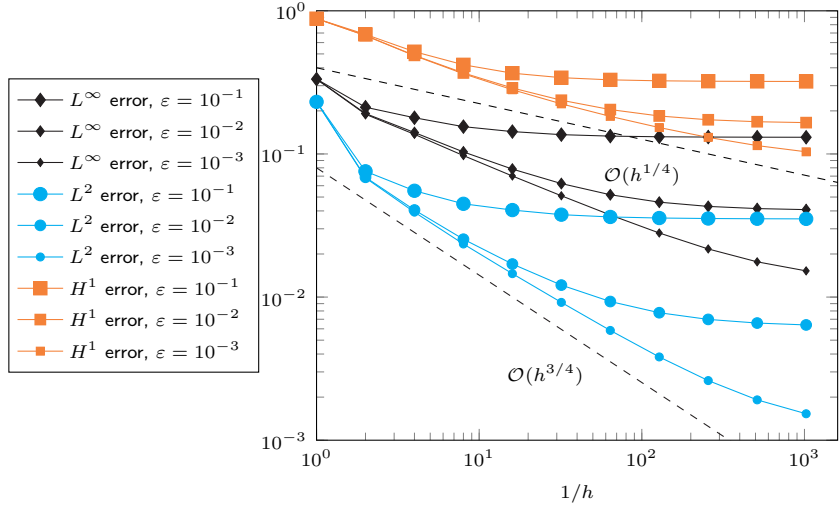

\begin{figure}
    \begin{center}
    \includegraphics[width=.7\textwidth]
               {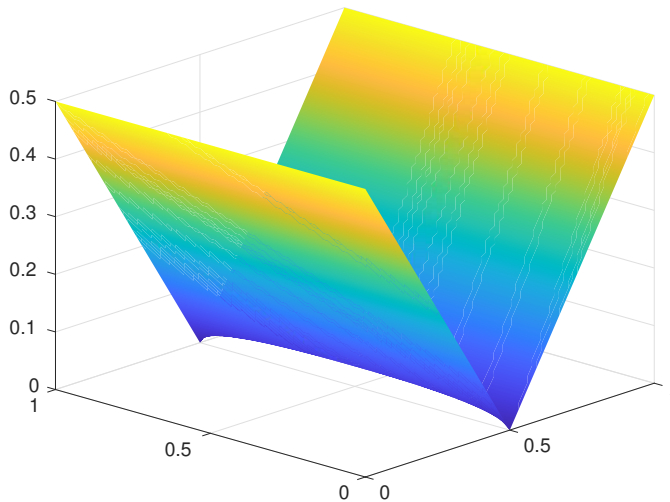}
    \end{center}
    \caption{Graph of the computed approximation in
             the third experiment;
             mesh size $h=1/64$ and $\varepsilon=10^{-3}$.
            \label{f:surf3}
           }
\end{figure}

\begin{table}                  
\begin{tabular}{c||c||c|c} 
$\varepsilon$  & $L^\infty$ error $h=2^{-9}$
               & $L^\infty$ error $h=2^{-10}$ & rate \\
\hline
$2^{-1}$  & 3.376e-01 & 3.376e-01 & --- \\
$2^{-2}$  & 2.193e-01 & 2.193e-01 & 0.62\\
$2^{-3}$  & 1.477e-01 & 1.476e-01 & 0.57\\
$2^{-4}$  & 1.022e-01 & 1.020e-01 & 0.53\\
$2^{-5}$  & 7.196e-02 & 7.162e-02 & 0.51\\
$2^{-6}$  & 5.127e-02 & 5.074e-02 & 0.49\\
$2^{-7}$  & 3.708e-02 & 3.627e-02 & 0.42\\
$2^{-8}$  & 2.751e-02 & 2.627e-02 & 0.48\\
$2^{-9}$  & 2.131e-02 & 1.951e-02 & 0.46\\
$2^{-10}$ & 1.753e-02 & 1.512e-02 & 0.36\\
$2^{-11}$ & 1.537e-02 & 1.245e-02 & 0.28
\end{tabular}
\caption{Convergence with respect to $\varepsilon$
         in the third experiment.
         \label{t:epsconv_ex3}
        }
\end{table}

\subsection{Fourth experiment}
In this example, the function
$$
	u(x,y) = -\left( \left(\sin(\pi x)\right)^{-1} +
	\left(\sin(\pi y)\right)^{-1} \right)^{-1}
$$
solves \eqref{pr:Monge-Ampere} with homogeneous Dirichlet boundary data and the right-hand side
$$
	f(x,y) = \frac{4\pi^2\sin(\pi x)^2\sin(\pi y)^2(2-\sin(\pi x)\sin(\pi y))}{(\sin(\pi x) + \sin(\pi y))^4}.
$$
The solution belongs to
$H^{2-\nu}(\Omega)$ for any $\nu>0$,
but not to $H^{2}(\Omega)$.
It can be computed explicitly that $u$ satisfies
$$
	|\D^2 u(x,y)|/f(x,y) \leq 80/\mathrm{dist}((x,y), \partial \Omega)^2 \text{ for all } (x,y) \in \Omega.
$$
Hence, \eqref{ineq:effect-of-regularization-pointwise} holds on the subdomain
$\Omega' \coloneqq \{x \in \Omega : \dist(x, \partial \Omega) \geq 10\sqrt{\varepsilon}\}$.
For homogeneous Dirichlet data, the arguments from \Cref{sec:convergence} lead to 
$$
	\|u - u_\varepsilon\|_{L^\infty(\Omega)} \leq \|u\|_{L^\infty(\Omega \setminus \Omega')} \leq 50\varepsilon^{1/4}.
$$
This bound has at least two consequences.
First, convergence of FEM is guaranteed
(under the assumption that the mesh-size is sufficient
small depending on the parameter $\varepsilon$) even
if this example is not covered by \Cref{cor:convergence-FEM}
(because $f$ attains values arbitrarily close to zero).
Second, we expect at least $1/4$ for the convergence rate
of $\|u - u_h^{(\varepsilon)}\|_{L^\infty(\Omega)}$ in $\varepsilon$
with sufficiently small $h$.

The convergence history is displayed in
Figure~\ref{f:conv4}.
Table~\ref{t:epsconv_ex4} shows the empirical convergence
with respect to $\varepsilon$.
On meshes with error stagnation with respect to $h$,
the convergence rate with respect to $\varepsilon$
is observed to be close to 1; this is better than the lower bound $1/4$ computed above.

\begin{filecontents}{convhist_ex4_epsi0.1.dat}
	 ndof	 hinv	 Linferr	 L2err	 H1err	 H2err	 eta	 niter
   1.2000000000000000e+01   1.0000000000000000e+00   4.9999999999999994e-01   3.0775797901575502e-01   1.5049033717901279e+00   1.6178115433030300e+01   5.1347049088393808e+00   1.0000000000000000e+00
   3.6000000000000000e+01   2.0000000000000000e+00   1.1075228782688129e-01   9.2482615603483365e-02   7.0162497302145244e-01   1.5316253828927737e+01   5.3804624250795383e+00   3.0000000000000000e+00
   1.3200000000000000e+02   4.0000000000000000e+00   3.4177005260759175e-02   3.2169009936327576e-02   4.6330649899555648e-01   1.5409884534808212e+01   4.4914174642688653e+00   9.0000000000000000e+00
   5.1600000000000000e+02   8.0000000000000000e+00   2.4079271152160853e-02   1.7214039635966689e-02   2.9924633379120724e-01   1.6114173906888382e+01   3.8979627629639446e+00   8.0000000000000000e+00
   2.0520000000000000e+03   1.6000000000000000e+01   1.7735592432625236e-02   1.5853550468626812e-02   2.2099770895874091e-01   1.6922300020637199e+01   3.3653078858312409e+00   7.0000000000000000e+00
   8.1960000000000000e+03   3.2000000000000000e+01   1.8843124625977564e-02   1.6466708498785929e-02   1.9282547452683851e-01   1.7694481281010461e+01   2.8939711624889704e+00   7.0000000000000000e+00
   3.2772000000000000e+04   6.4000000000000000e+01   1.9224458493145385e-02   1.6848969443023897e-02   1.8457271218810181e-01   1.8459555863917135e+01   2.4841188608088678e+00   7.0000000000000000e+00
   1.3107600000000000e+05   1.2800000000000000e+02   1.9359423842179646e-02   1.6998972950251181e-02   1.8243105178835220e-01   1.9241186789510550e+01   2.1303062034932507e+00   6.0000000000000000e+00
   5.2429200000000000e+05   2.5600000000000000e+02   1.9402839845973296e-02   1.7049999003023092e-02   1.8191725245927415e-01   2.0046077433500457e+01   1.8257925622130822e+00   6.0000000000000000e+00
   2.0971560000000000e+06   5.1200000000000000e+02   1.9415840232824083e-02   1.7066250812885046e-02   1.8180294853279280e-01   2.0872558012924088e+01   1.5641556385616131e+00   6.0000000000000000e+00
   8.3886120000000000e+06   1.0240000000000000e+03   1.9419666969927557e-02   1.7071238505420409e-02   1.8177996472162525e-01   2.1715719907287934e+01   1.3396303608956586e+00   6.0000000000000000e+00
\end{filecontents}
\begin{filecontents}{convhist_ex4_epsi0.01.dat}
	 ndof	 hinv	 Linferr	 L2err	 H1err	 H2err	 eta	 niter
   1.2000000000000000e+01   1.0000000000000000e+00   4.9999999999999994e-01   3.0775797901575502e-01   1.5049033717901279e+00   1.6178115433030300e+01   5.1347049088393808e+00   1.0000000000000000e+00
   3.6000000000000000e+01   2.0000000000000000e+00   1.0192570353238767e-01   8.7341258975587546e-02   6.9125490729489225e-01   1.5291147574223745e+01   5.5398842883701542e+00   1.0000000000000000e+01
   1.3200000000000000e+02   4.0000000000000000e+00   1.8142020798950054e-02   1.7617942996102250e-02   4.4529817391835547e-01   1.5123271481904897e+01   5.2757976054499771e+00   1.1000000000000000e+01
   5.1600000000000000e+02   8.0000000000000000e+00   1.2677792016182743e-02   7.8582457364539133e-03   2.6659436720941787e-01   1.5093261205968762e+01   5.1305588666873305e+00   9.0000000000000000e+00
   2.0520000000000000e+03   1.6000000000000000e+01   6.1987443265851816e-03   4.3521748242602559e-03   1.5168862545267867e-01   1.5280643655835041e+01   4.9912003359806425e+00   9.0000000000000000e+00
   8.1960000000000000e+03   3.2000000000000000e+01   3.0495186266363064e-03   1.9576622577567458e-03   8.7128860999396027e-02   1.5655314722192587e+01   4.8793811351931469e+00   9.0000000000000000e+00
   3.2772000000000000e+04   6.4000000000000000e+01   1.7524288562851941e-03   7.3114562685346973e-04   5.5550986501418917e-02   1.6052112103117935e+01   4.7711273059584620e+00   8.0000000000000000e+00
   1.3107600000000000e+05   1.2800000000000000e+02   1.9374709006422519e-03   7.1533403521053650e-04   4.3131215833495162e-02   1.6454229135333822e+01   4.6673018919836089e+00   9.0000000000000000e+00
   5.2429200000000000e+05   2.5600000000000000e+02   2.0610078157559733e-03   9.6149014805192463e-04   3.9414912625659708e-02   1.6869464884555736e+01   4.5677947728962502e+00   9.0000000000000000e+00
   2.0971560000000000e+06   5.1200000000000000e+02   2.1206453414596288e-03   1.1043943147676974e-03   3.8557113863725921e-02   1.7301536512459396e+01   4.4722907985374052e+00   9.0000000000000000e+00
   8.3886120000000000e+06   1.0240000000000000e+03   2.1487882562977040e-03   1.1749847501885328e-03   3.8425629838440181e-02   1.7750736366812276e+01   4.3803160764014546e+00   9.0000000000000000e+00
\end{filecontents}
\begin{filecontents}{convhist_ex4_epsi0.001.dat}
	 ndof	 hinv	 Linferr	 L2err	 H1err	 H2err	 eta	 niter
   1.2000000000000000e+01   1.0000000000000000e+00   4.9999999999999994e-01   3.0775797901575502e-01   1.5049033717901279e+00   1.6178115433030300e+01   5.1347049088393808e+00   1.0000000000000000e+00
   3.6000000000000000e+01   2.0000000000000000e+00   1.0061671282430734e-01   8.6496438600612388e-02   6.8962912994482894e-01   1.5285765458364027e+01   5.5576067733662082e+00   1.0000000000000000e+01
   1.3200000000000000e+02   4.0000000000000000e+00   1.9172004750490856e-02   1.7337423609009469e-02   4.4530541065990853e-01   1.5118945106160691e+01   5.2918458690693067e+00   1.1000000000000000e+01
   5.1600000000000000e+02   8.0000000000000000e+00   1.3819266455587176e-02   8.5642433386029729e-03   2.6824204659034440e-01   1.5086567826579978e+01   5.1828004263608092e+00   1.0000000000000000e+01
   2.0520000000000000e+03   1.6000000000000000e+01   7.7921212314273625e-03   5.7612572847439295e-03   1.5392953446015578e-01   1.5086050567376176e+01   5.1445814390658340e+00   1.0000000000000000e+01
   8.1960000000000000e+03   3.2000000000000000e+01   4.1180657182657843e-03   3.2963315698183123e-03   8.5448648092964083e-02   1.5083575543035238e+01   5.1244163115717223e+00   1.0000000000000000e+01
   3.2772000000000000e+04   6.4000000000000000e+01   2.1728020998177666e-03   1.7268501729080002e-03   4.6622677932705636e-02   1.5173243695586454e+01   5.1068594407114078e+00   1.0000000000000000e+01
   1.3107600000000000e+05   1.2800000000000000e+02   1.1303496291527326e-03   8.5964423879270600e-04   2.5601939887820457e-02   1.5311783353692135e+01   5.0909255726444931e+00   1.0000000000000000e+01
   5.2429200000000000e+05   2.5600000000000000e+02   5.7528776350749405e-04   4.0940713688983143e-04   1.4964322389195563e-02   1.5460845101622040e+01   5.0754589604024760e+00   1.0000000000000000e+01
   2.0971560000000000e+06   5.1200000000000000e+02   2.8509681029447592e-04   1.8539432016152874e-04   1.0272582973907081e-02   1.5618985935272672e+01   5.0603526980995435e+00   1.0000000000000000e+01
   8.3886120000000000e+06   1.0240000000000000e+03   2.3383823881455157e-04   8.4881351224754727e-05   8.6196876400701575e-03   1.5788691903488033e+01   5.0456338495009154e+00   1.0000000000000000e+01
\end{filecontents}

\begin{figure}
\begin{tikzpicture}
\begin{loglogaxis}[legend pos=south west,legend cell align=left,
                   legend style={fill=none},
                   ymin=1e-5,ymax=1e1,
                   xmin=1e0,xmax=1.6e3,
                   ytick={1e-6,1e-5,1e-4,1e-3,1e-2,1e-1,1e0,1e1},
                   xtick={1e0,1e1,1e2,1e3}]
\pgfplotsset{
cycle list={%
{Black, mark=diamond*, mark size=2.5pt},
{Black, mark=diamond*, mark size=2pt},
{Black, mark=diamond*, mark size=1.5pt},
{Cyan, mark=*, mark size=2.5pt},
{Cyan, mark=*, mark size=2pt},
{Cyan, mark=*, mark size=1.5pt},
{Orange, mark=square*, mark size=2.5pt},
{Orange, mark=square*, mark size=2pt},
{Orange, mark=square*, mark size=1.5pt},
},
legend style={
at={(-0.11,.5)}, anchor=east},
font=\sffamily\scriptsize,
xlabel=$1/h$,ylabel=
}
\addplot+ table[x=hinv,y=Linferr]{convhist_ex4_epsi0.1.dat};
\addlegendentry{$L^\infty$ error, $\varepsilon=10^{-1}$}
\addplot+ table[x=hinv,y=Linferr]{convhist_ex4_epsi0.01.dat};
\addlegendentry{$L^\infty$ error, $\varepsilon=10^{-2}$}
\addplot+ table[x=hinv,y=Linferr]{convhist_ex4_epsi0.001.dat};
 \addlegendentry{$L^\infty$ error, $\varepsilon=10^{-3}$}
\addplot+ table[x=hinv,y=L2err]{convhist_ex4_epsi0.1.dat};
\addlegendentry{$L^2$ error, $\varepsilon=10^{-1}$}
\addplot+ table[x=hinv,y=L2err]{convhist_ex4_epsi0.01.dat};
\addlegendentry{$L^2$ error, $\varepsilon=10^{-2}$}
\addplot+ table[x=hinv,y=L2err]{convhist_ex4_epsi0.001.dat};
 \addlegendentry{$L^2$ error, $\varepsilon=10^{-3}$}
\addplot+ table[x=hinv,y=H1err]{convhist_ex4_epsi0.1.dat};
\addlegendentry{$H^1$ error, $\varepsilon=10^{-1}$}
\addplot+ table[x=hinv,y=H1err]{convhist_ex4_epsi0.01.dat};
\addlegendentry{$H^1$ error, $\varepsilon=10^{-2}$}
\addplot+ table[x=hinv,y=H1err]{convhist_ex4_epsi0.001.dat};
 \addlegendentry{$H^1$ error, $\varepsilon=10^{-3}$}
\addplot+ [dashed,mark=none] coordinates{(1,3e-2) (1e3,3e-5)};
\node(z) at  (axis cs:40,1e-3)
          [below left] {$\mathcal O (h)$};
\end{loglogaxis}
\end{tikzpicture}
   \caption{Convergence history for the fourth experiment.
            \label{f:conv4}
           }
\end{figure}
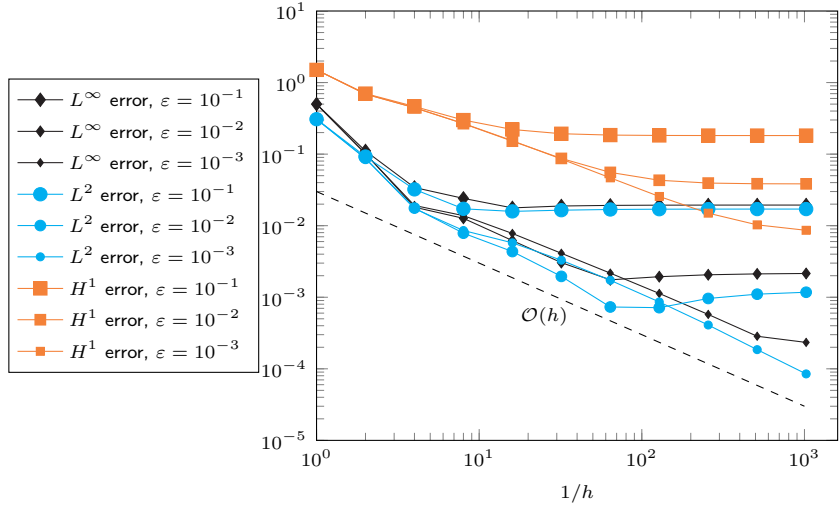

\begin{table}
	\begin{tabular}{c||c||c|c} 
		$\varepsilon$  & $L^\infty$ error $h=2^{-9}$
		& $L^\infty$ error $h=2^{-10}$ & rate \\
		\hline
		$2^{-1}$  &1.083e-01 & 1.083e-01 & ---\\
		$2^{-2}$  &4.869e-02 & 4.869e-02 & 1.15\\
		$2^{-3}$  &2.417e-02 & 2.417e-02 & 1.01\\
		$2^{-4}$  &1.228e-02 & 1.229e-02 & 0.97\\
		$2^{-5}$  &6.296e-03 & 6.316e-03 & 0.96\\
		$2^{-6}$  &3.246e-03 & 3.272e-03 & 0.94\\
		$2^{-7}$  &1.674e-03 & 1.702e-03 & 0.94\\
		$2^{-8}$  &8.554e-04 & 8.821e-04 & 0.94\\
		$2^{-9}$  &4.292e-04 & 4.525e-04 & 0.96\\
		$2^{-10}$ &2.861e-04 & 2.283e-04 & 0.98\\
		$2^{-11}$ &3.060e-04 & 1.527e-04 & 0.57
	\end{tabular}
	\caption{Convergence with respect to $\varepsilon$
		in the fourth experiment.
		\label{t:epsconv_ex4}
	}
\end{table}

\subsection{Conclusions from the computations}

In all examples, we do not observe deterioration of the
approximation constants for small choices of $\varepsilon$,
which cannot be excluded by our theory.
The convergence rate in $h$ (up to stagnation caused by
the regularization) is visible starting from the
coarsest mesh.
We observe convergence rates of similar order
in the $H^1$ and $L^\infty$ norms
even if the solution does not belong to $H^2(\Omega)$,
and better rates in the $L^2$ norm.

In general, our theory (\Cref{cor:convergence-FEM}) does not provide
a strategy on how to balance the regularization parameter $\varepsilon$
and the mesh resolution $h$.
In some of our examples, based on knowledge of the exact solution,
we were able to justify the observed behaviour with respect to
$\varepsilon$.

\bibliographystyle{amsplain}
\bibliography{references.bib}

\end{document}